\newtheorem{theorem}{Theorem}[section]
\newtheorem{proposition}[theorem]{Proposition}
\newtheorem{corollary}[theorem]{Corollary}
\newtheorem{claim}[theorem]{Claim}
\theoremstyle{definition}
\newtheorem{definition}[theorem]{Definition}
\newenvironment{display}%
{\refstepcounter{theorem}
 \begin{list}{}{%
  \setlength{\leftmargin}{1em}}
  \item[](\thetheorem) \sl}%
{\end{list}}
\newenvironment{display*}%
{\begin{list}{}{%
  \setlength{\leftmargin}{1em}}
  \item[]\sl}%
{\end{list}}
\newcommand{\n}[1]{}
\newcommand{\theory}[1]{\mathsf {#1}}
\newcommand{\sqr}{\operatorname{\Rightarrow}}
\newcommand{\LK}{\mathbf{LK}}
\newcommand{\tS}{\theory{S}}
\newcommand{\tC}{\theory{C}}
\newcommand{\tF}{\theory F}
\newcommand{\tzf}{\theory{ZF}}
\newcommand{\tzfc}{{\theory{ZFC}}}
\newcommand{\tgb}{\theory{GB}}
\newcommand{\bob}{\mbox{\raisebox{.4ex}{$\ulcorner$}}}
\newcommand{\eob}{\mbox{\raisebox{.4ex}{$\urcorner$}}}
\newcommand{\bmob}{\bob}
\newcommand{\emob}{\eob}
\newcommand{\nmodels}{\not\models}
\newcommand{\gbmodels}{\models^*}
\newcommand{\ngbmodels}{\nmodels^*}
\newcommand{\proves}{\operatorname\vdash}
\newcommand{\nproves}{\operatorname{\nvdash}}
\newcommand{\forces}{\operatorname\Vdash}
\newcommand{\nforces}{\operatorname\nVdash}
\newcommand{\Vcheck}{{\mathsf V}}
\newcommand{\Gcheck}{{\boldsymbol{\mathsf G}}}
\newcommand{\Pcheck}{{\mathsf P}}
\newcommand{\splain}{{\mathsf s}}
\newcommand{\cplain}{{\mathsf c}}
\newcommand{\spr}{{\splain'}}
\newcommand{\cpr}{{\cplain'}}
\newcommand{\sprpr}{{\splain''}}
\newcommand{\spl}{{\splain^+}}%sub- or superscript for \lspl
\newcommand{\eqdef}{\operatorname{\overset{\mathrm{def}}{\,\,=\,\,}}}
\newcommand{\teqdef}{$\eqdef$}
\newcommand{\iffdef}{\operatorname{\overset{\mathrm{def}}{\iff}}}
\newcommand{\tiffdef}{$\iffdef$}
\newcommand{\lang}[1]{{\mathcal {#1}}}
\newcommand{\ol}{\overline}
\newcommand{\complet}[1]{{\ol{#1}}}
\newcommand{\cneg}{\operatorname\neg}
\newcommand{\cor}{\operatorname\vee}
\newcommand{\cand}{\operatorname\wedge}
\newcommand{\cimplies}{\operatorname\rightarrow}
\newcommand{\ciff}{\operatorname\leftrightarrow}
\newcommand{\opneg}{\operatorname{\mypmb{\neg}}}
\newcommand{\opand} {\operatorname{\mypmb{\wedge}}}
\newcommand{\opor}{\operatorname{\mypmb{\vee}}}
\newcommand{\opimplies}{\operatorname{\mypmb{\rightarrow}}}
\newcommand{\opiff}{\operatorname{\mypmb{\leftrightarrow}}}
\newcommand{\opforall}{\mypmb{\forall}}
\newcommand{\opexists}{\mypmb{\exists}}
\newcommand{\opin} {\operatorname{\mypmb{\in}}}
\newcommand{\opeq}{\mypmb=}
\newcommand{\opneq}{\mypmb{\neq}}
\newcommand{\opbigand}{\operatorname{\mypmb{\bigwedge}}}
\newcommand{\opbigor}{\operatorname{\mypmb{\bigvee}}}
\newcommand{\axiom}[1]{{\sf#1}}
\newcommand{\Free}{\operatorname{Free}}
\newcommand{\Con}{\operatorname{Con}}
\newcommand{\binop}[1]{\,#1\,}
\newcommand{\vbl}{\textup v}
\newcommand{\Ord}{\operatorname{Ord}}
\newcommand{\alg}[1]{\mathfrak #1}
\newcommand{\lbv}{{[\![}}
\newcommand{\rbv}{{]\!]}}
\newcommand{\bv}[1]{\lbv#1\rbv}
\newcommand{\f}{\hspace{.1em}}
\newcommand{\Inn}{\operatorname{Inn}}
\newenvironment{finfunone}%
{\scriptsize\setlength\arraycolsep{1pt}%
\begin{array}{@{}c@{}}}%
{\end{array}}
\newenvironment{finfuntwo}%
{\scriptsize\setlength\arraycolsep{1pt}%
\begin{array}{@{}cc@{}}}%
{\end{array}}
\newenvironment{finfunthree}%
{\scriptsize\setlength\arraycolsep{1pt}%
\begin{array}{@{}ccc@{}}}%
{\end{array}}
\newenvironment{finfunfour}%
{\scriptsize\setlength\arraycolsep{1pt}%
\begin{array}{@{}cccc@{}}}%
{\end{array}}
\newenvironment{finfunfive}%
{\scriptsize\setlength\arraycolsep{1pt}%
\begin{array}{@{}ccccc@{}}}%
{\end{array}}
\newenvironment{finfunsix}%
{\scriptsize\setlength\arraycolsep{1pt}%
\begin{array}{@{}cccccc@{}}}%
{\end{array}}
\newenvironment{finfunseven}%
{\scriptsize\setlength\arraycolsep{1pt}%
\begin{array}{@{}ccccccc@{}}}%
{\end{array}}
\newenvironment{finfuneight}%
{\scriptsize\setlength\arraycolsep{1pt}%
\begin{array}{@{}cccccccc@{}}}%
{\end{array}}
\newenvironment{newfinfunone}%
{\setlength\arraycolsep{1pt}%
$\begin{array}{@{}c@{}}}%
{\end{array}$}
\newenvironment{newfinfuntwo}%
{\setlength\arraycolsep{1pt}%
$\begin{array}{@{}cc@{}}}%
{\end{array}$}
\newenvironment{newfinfunthree}%
{\setlength\arraycolsep{1pt}%
$\begin{array}{@{}ccc@{}}}%
{\end{array}$}
\newenvironment{newfinfunfour}%
{\setlength\arraycolsep{1pt}%
$\begin{array}{@{}cccc@{}}}%
{\end{array}$}
\newenvironment{newfinfunfive}%
{\setlength\arraycolsep{1pt}%
$\begin{array}{@{}ccccc@{}}}%
{\end{array}$}
\newenvironment{newfinfunsix}%
{\setlength\arraycolsep{1pt}%
$\begin{array}{@{}cccccc@{}}}%
{\end{array}$}
\newenvironment{newfinfunseven}%
{\setlength\arraycolsep{1pt}%
$\begin{array}{@{}ccccccc@{}}}%
{\end{array}$}
\newenvironment{newfinfuneight}%
{\setlength\arraycolsep{1pt}%
$\begin{array}{@{}cccccccc@{}}}%
{\end{array}$}
\newcommand{\ffthree}[1]{\text{\scriptsize\begin{newfinfunthree}#1\end{newfinfunthree}}}
\newcommand{\bone}{\begin{finfunone}}
\newcommand{\eone}{\end{finfunone}}
\newcommand{\btwo}{\begin{finfuntwo}}
\newcommand{\etwo}{\end{finfuntwo}}
\newcommand{\bthree}{\begin{finfunthree}}
\newcommand{\ethree}{\end{finfunthree}}
\newcommand{\bfour}{\begin{finfunfour}}
\newcommand{\efour}{\end{finfunfour}}
\newcommand{\bfive}{\begin{finfunfive}}
\newcommand{\efive}{\end{finfunfive}}
\newcommand{\bsix}{\begin{finfunsix}}
\newcommand{\esix}{\end{finfunsix}}
\newcommand{\bseven}{\begin{finfunseven}}
\newcommand{\eseven}{\end{finfunseven}}
\newcommand{\beight}{\begin{finfuneight}}
\newcommand{\eeight}{\end{finfuneight}}
\definecolor{gray}{rgb}{.7,.7,.7}
\newcommand{\obsub}[1]{{\color{gray}{({\color{black}#1})}}}
\newcommand{\obass}[1]{{\color{gray}{[{\color{black}#1}]}}}
\newcommand{\add}{{}^\curvearrowleft}
\newcommand{\HF}{\operatorname{HF}}
\newcommand{\str}[1]{\mathfrak{#1}}
\newcommand{\Dom}{\operatorname{Dom}}
\newcommand{\subclose}[1]{\ol{#1}}
\newenvironment{denumerate}{%

        \begin{enumerate}}{\end{enumerate}}%
\newcommand{\la}{\langle}
\newcommand{\ra}{\rangle}
\newcommand{\m}[1]{}
\renewcommand{\title}[1]{\begin{center}\huge #1\end{center}}
\renewcommand{\author}[1]{

\begin{center}\large #1\end{center}}
\newcommand{\address}[1]{

\begin{center}\it\large #1\end{center}}
\newcommand{\ead}[1]{

\noindent{{\it email: }\tt #1}\newline}
\newenvironment{keyword}{

\noindent{\it Keywords:} }{\newline}
\newcommand{\sep}{, }
\renewenvironment{abstract}{

\noindent{\bf Abstract}\newline}{\newline}
\begin{document}
\title{Satisfaction relations for proper classes: Applications in logic and set theory}
\author{Robert A. Van Wesep}
\address{1402 Bolton Street\\Baltimore, MD, 21217, USA}
\begin{abstract}
We develop the theory of partial satisfaction relations for structures that may be proper classes and define a satisfaction predicate ($\gbmodels$) appropriate to such structures. We indicate the utility of this theory as a framework for the development of the metatheory of first-order predicate logic and set theory, and we use it to prove that for any recursively enumerable extension $\Theta$ of $\tzf$ there is a finitely axiomatizable extension $\Theta'$ of $\tgb$ that is a conservative extension of $\Theta$. We also prove a conservative extension result that justifies the use of $\gbmodels$ to characterize ground models for forcing constructions.
\end{abstract}
\begin{keyword}
satisfaction\sep proper class\sep class theory\sep conservative extension
\end{keyword}
\ead{rvanwesep@verizon.net}

\section{Introduction}

In discussions of the theory of sets it is a common practice to use the terminology of satisfaction and models informally with reference to structures that are proper classes. An ``inner model'', for example, is informally defined as a proper transitive class $M$ such that $(M;\in)$ ``satisfies'' $\tzf$ (Zermelo-Fraenkel set theory). In the context of a pure set theory, such as $\tzf$, any such reference is necessarily informal for two reasons:
\begin{enumerate}
\item Proper classes do not exist, and can only be referred to as predicates applicable to sets.
\item $\tzf$ is not finitely axiomatizable, so one cannot say that $M$ is an inner model by means of a single sentence relativized to $M$. 
\end{enumerate}
For example, in the context of $\tzf$ the statement
\m{aq}
\begin{display}
\label{aq}
$L$ is a model of $\tzfc$
\end{display}
is understood to stand for the set of all sentences $\theta^L$, where $\theta$ is an axiom of $\tzfc$, and $\theta^L$ is the sentence $\theta$ with all quantified variables are restricted to constructible sets. In the context of a class theory such as the von Neumann-Bernays-G\"odel, or G\"odel-Bernays, theory $\tgb$, one can demonstrate the existence of the class $L$ of constructible sets, and one might expect to be able to formulate (\ref{aq}) as a single sentence
\m{ar}
\begin{display}
\label{ar}
$L \models \tzfc$,
\end{display}
but the interpretation of such a sentence is problematic, inasmuch as it is not possible to prove---in a conservative\footnote{By `conservative' we mean that the class comprehension axiom schema employs only formulas without bound class variables.} class theory such as $\tgb$---that there exists a satisfaction relation for $L$ (or for proper class structures in general).

In this article,  we examine issues related to the existence and use of partial satisfaction relations for proper class structures in the context of conservative class theories with and without the \axiom{Infinity} axiom. In particular, we propose a definition of satisfaction for proper class structures that is weak enough that (\ref{ar}), for example, is a theorem of $\tgb$, but strong enough that (\ref{ar}) implies $\theta^L$ for every axiom $\theta$ of $\tzf$. For the sake of emphasis, in this article we denote this \emph{universal satisfaction predicate} by `$\gbmodels$', but there is no reason its definition (\ref{t}) could not be taken as the primary definition of `$\models$'. By means of $\gbmodels$, the conventional informal use of proper class models may be rendered formally correct with minimal modifications to standard practice. Such notions as elementary substructure and elementary embedding of proper class structures are formalizable in $\tgb$ in a similar way, as are the notions of forcing relation and boolean valuation, all of which are subject to the same limitations as satisfaction in conventional treatments.

Section~\ref{as} is devoted to a brief explication of the notion of satisfaction for proper class structures and related ideas. These are quite straightforward, and we do not suppose that they are entirely new; however, we have not found any systematic treatment of them in the existing literature. In particular, we have not found a definition of $\gbmodels$ or a discussion of its use in set theory. We therefore state the principal theorems and sketch their (straightforward) proofs. We do this primarily in the context of the theories $\tS$ and $\tC$, which we define respectively to be $\tzf$ and $\tgb$ with the \axiom{Infinity} axiom omitted. Note that these theories do not contain $\opneg\axiom{Infinity}$, so they may be extended to $\tzf$ and $\tgb$, respectively, by the addition of \axiom{Infinity}. $\tS + \opneg\axiom{Infinity}$ is---for all practical purposes---Peano arithmetic. The use of satisfaction relations for proper classes in $\tC$ permits an essentially finitary, but nonetheless efficient, development of the theory of first-order predicate logic.\footnote{We do not present this development here beyond what is necessary for the purposes of this article; but the following example is worthy of mention. Consider G\"odel's first incompleteness theorem, which presents a sentence $\sigma$ that says, in effect, that $\sigma$ is not provable in $\tS$ (which we use here for convenience instead of the more customary Peano arithmetic), and states that $\sigma$ is not a theorem of $\tS$, even though it is (by virtue of that fact) true. This is ordinarily proved in a metatheory other than $\tS$, which is often not explicitly characterized, but may generally be considered to be $\tzf$. Then it is stated that the theorem and proof could be given in $\tS$ with the additional hypothesis that $\tS$ is consistent. Therefore, assuming $\Con \tS$, $\tS \nproves \Con\tS$, which is G\"odel's second incompleteness theorem. An actual proof of the first theorem in $\tS$ is generally considered such an ungainly thing that it is not usually given in any detail, despite the fact that its existence is critical to the proof of the second theorem. This defect can be remedied by proving the first theorem in $\tC$, rather than $\tzf$, which is not any harder, and is arguably more natural. Then we can invoke the well known result that $\tC$ is a conservative extension of $\tS$ \eqref{l} to conclude rigorously that a proof of the first theorem in $\tS$ exists, without giving it in detail.} The book \emph{Foundations of Mathematics: A Generalist's Guide}\cite{RVWwtbi1:2012} employs these ideas throughout and demonstrates the clarity and precision they bring to the explication of the foundations of mathematics, from elementary logic to advanced set theory.  

In Section~\ref{at} we prove a generalization of the well known theorem that $\tgb$ is a finitely axiomatizable conservative extension of $\tzf$. This is Theorem~\ref{a}. The universal satisfaction predicate is not involved in the statement of this result, but (a slight variation of) it is intrinsic to the proof.

In Section~\ref{au} we discuss the implications of the use of $\gbmodels$ for the correlation between the forcing relation within a transitive model $M$ of $\tzf$, and the satisfaction relation in a generic extension $M[G]$. If $M$ is a countable set, of course, $p \forces^{\mathbb P} \phi(x_0, \dots, x_{n-1})$ iff for every $M$-generic filter $G$ on a partial order $\mathbb P \in M$, if $p \in G$ then $M[G] \models \phi[x_0^G, \dots, x_{n-1}^G]$. Essentially the same correlation holds in the general case (when $M$ is an uncountable set or a proper class), but it must be stated without reference to generic filters.

This is the method of ``arguing in a generic extension'', which conventionally goes as follows. The theory $\Theta$ (Definition~\ref{aj})---which includes $\tzf$ and also says that the universe is $\Vcheck[G]$, where $\Vcheck$ is an inner model of $\tzf$, and $G$ is a $\Vcheck$-generic filter on $\mathbb P$---holds in $M[G]$ whenever $G$ is an $M$-generic filter on $\mathbb P \in M$, and $\Vcheck$ is interpreted as $M$. ``Arguing in a generic extension'' means proving something in $\Theta$ to demonstrate that something is forced.

If we adopt $\gbmodels$, it is natural to use the theory $\Theta'$ defined in (\ref{ak}), which differs from $\Theta$ in that it includes $\tgb$ and implements the assumption that $\Vcheck$ is an inner model of $\tzf$ as the single sentence \bob$\Vcheck \gbmodels \tzf$\eob, rather than all sentences \bob$\theta^{\Vcheck}$\eob, where $\theta$ is an axiom of $\tzf$. To justify the inference that something is forced from the existence of a proof in $\Theta'$, we show that $\Theta'$ is a conservative extension of $\Theta$. This is Theorem~\ref{ai}.

In Section~\ref{bo}, for the sake of completeness, we define a universal forcing relation and universal boolean valuation function that are analogous to the universal satisfaction relation, by means of which the conventional informal use of forcing relations and boolean valuations for proper classes may be rendered formally correct---again, with minimal modifications to standard practice.

\section{The role of satisfaction for proper classes in set theory and its metatheory}
\m{as}\label{as}

\subsection{Some useful conventions}

Recall the definition of $\tS$ and $\tC$ as $\tzf$ and $\tgb$ with \axiom{Infinity} omitted. We regard $\tC$ (like $\tgb$) as a theory with two sorts of individuals: sets and classes. All sets are classes. A class is \emph{proper} \tiffdef it is not a set. Let $\cplain$ be the signature (similarity type) of this theory, and let $\splain$ be this signature with the class sort omitted. Thus, $\splain$ is the signature of $\tS$ (and $\tzf$). Let $\lang L^{\splain}$ and $\lang L^{\cplain}$ be the languages appropriate to the above signatures. Given a theory $\Theta$, i.e., a class of sentences, let $\complet \Theta$ \teqdef its deductive closure.

It is well known that
\m{k}
\begin{display}
\label{k}
$\tC$ is finitely axiomatizable,
\end{display}
whereas $\tS$ is not.\footnote{This is better known for $\tgb$ and $\tzf$. The finite axiomatization of $\tgb$ as originally given by Bernays\cite{Bernays:1937} yields $\tC$ if \axiom{Infinity} is omitted. A proof that $\tzf$ is not finitely axiomatizable may be given in $\tS + \Con \tzf$ as an easy application of the reflection theorem schema of $\tzf$, i.e. of the theorem of $\tS$ that every instance of the reflection schema is a theorem of $\tzf$. From this it follows that $\tS$ is not finitely axiomatizable (otherwise by adding \axiom{Infinity} we would have a finite axiomatization of $\tzf$); however, this gives the result as a theorem of $\tS + \Con \tzf$ . In what may be viewed as a clever adaptation of the reflection method, Ryll-Nardzewski gave a proof in $\tzf$ that $\tS$ is not finitely axiomatizable\cite{Ryll:1952}. Ryll-Nardzewski's argument is intrinsically infinitary, as it uses the satisfaction relation for $(V_\omega; \in)$. (In fact, the statement of his theorem is intrinsically infinitary, as it applies to arbitrary true extensions of $\tS$, i.e., theories $\Theta \supseteq \tS$ such that $\models^S \Theta$, where $S$ is the full satisfaction relation for $(V_\omega;\in)$.) By way of illustrating some of the ideas presented in this article, we show in the proof of Theorem~\ref{bw} how Ryll-Nardzewski's argument may be (easily) adapted to provide a proof in $\tS + \Con \tS$ that $\tS$ is not finitely axiomatizable.}
\m{l}
\begin{display}
\label{l}
$\tC$ is a conservative extension of $\tS$ in the sense that $\complet{\tC} \cap \lang L^{\splain} = \complet{\tS}$.
\end{display}
(\ref{l}) has a simple infinitary proof (e.g., a $\tzf$-proof), and a considerably more involved finitary proof, (e.g., an $\tS$-proof). Given (\ref{l}), it is appropriate to regard $\tC$ as a \emph{finitary} theory, like $\tS$.

The discussion that follows, particularly the proofs of Theorems~\ref{a} and \ref{ai}, involves a sufficiently intricate interplay of meta- and object theories that it is helpful to use notation for linguistic expressions that distinguishes use and mention more particularly than is often done; although the reader is forewarned that we do not always maintain the highest standard in this regard---a judicious ambiguity sometimes best serves the cause of clarity. Unless otherwise noted, the following discussion takes place in the context of $\tC$. Thus, infinite sets may not exist.

Given a text string that represents an expression $\epsilon$ in an object language, if we flank it with corner quotes, \bmob\dots\emob, we create a string that represents a metalanguage name $\nu$ for $\epsilon$. We use boldface versions of standard typographic symbols for syntactical operations to denote various expression-building operations in any language. Thus, for example, $\phi \opand \psi$ is the conjunction of formulas $\phi$ and $\psi$ in any language. We may extend this to some common predicate and operation symbols, such as those for membership and identity. Thus, if $u = \bmob{}x\emob$, $v = \bmob{}y\emob$, and $w = \bmob{}z\emob$, then $u \opin v \opand v \opin w \opimplies u \opin w$ is \bmob{}if $x \in y$ and $y \in z$ then $x \in z$\emob.

Substitution of terms for variables in expressions is indicated with round brackets. Suppose $\epsilon$ is an expression, and $v_0, \dots, v_{n-1}$ are in $\Free \epsilon$, the set of free variables of $\epsilon$. Suppose $\tau_0, \dots, \tau_{n-1}$ are terms. $\epsilon\big(\ffthree{ v_0 & \dotsm & v_{n-1} \\ \tau_0 & \dotsm & \tau_{n-1} } \big)$ is the expression that results from the indicated substitutions. When it is not necessary to indicate the variables, `$\epsilon(\tau_0, \dots, \tau_{n-1})$' may be used. It is often convenient to use a similar notation where $\epsilon$ is indicated using the corner-quote convention. For example, suppose $\phi$, $\psi$, and $\theta$ are respectively \bmob$x \in y$\emob, \bmob$y \in z$\emob, and \bmob$x \in z$\emob. Then
\m{p}
\refstepcounter{theorem}\begin{equation}
\label{p}
\bmob\text{if $\obsub\phi$ and $\obsub\psi$ then $\obsub\theta$}\emob
\end{equation}
is
\[
\bmob\text{if $x \in y$ and $y \in z$ then $x \in z$}\emob,
\]
the result of substituting the metalanguage terms
\[
\bmob{}x \in y\emob,\ \bmob{}y \in z\emob,\ \bmob{}x \in z\emob 
\]
in the metalanguage term
\[
\bmob\text{if \underline{\quad} and \underline{\quad} then \underline{\quad}}\emob,
\]
where the underscores indicate variables (ranging over formulas) that we do not need to name, as they are always substituted in the indicated fashion. When we indicate such a substitution ``in line'', as in (\ref{p}), we give the brackets a lighter tone than the surrounding text, so as to render them relatively unobtrusive and to distinguish them from round brackets used as grouping indicators.

A similar convention applies to the use of square brackets to indicate assignments of individuals in a structure to variables in an expression for the purpose of valuation (or satisfaction in the case of formulas). Our first use of this is in (\ref{h}). Note that (\ref{h}.1) is a conventional use of square brackets to create the statement that the $\cplain$-formula $D$ is satisfied at the indicated values, viz., $\theta$ and $y$, for its free variables. (\ref{h}.2) is the statement that the formula indicated by the corner-quoted text, with implicit variables in place of the insertions, is satisfied when those variables are assigned the indicated values, viz., $S$ and $n$.

For convenience we suppose that $\splain$ and $\cplain$ have two binary predicate symbols, one to denote membership and one to denote identity. The extended signatures $\spr$ and $\cpr$ have, in addition, a nulary operation symbol (i.e., a constant) to denote the empty set $0$, and a binary operation symbol to denote the \emph{add} operation: $x \add y = x \cup \{y\}$.

`$\HF$' is a defined predicate in $\tS$ characterizing the hereditarily finite sets. In $\tC$, `$\HF$' may be used this way, and also as a constant denoting the class of hereditarily finite sets. Note that in this setting, $\HF = V_\omega$. For each $x$ such that $\HF x$, let $\hat x$ be a specific $\spr$-term (a composition of \bmob$0$\emob{} and \bmob$\add$\emob) whose value is $x$, chosen by some fixed recursive procedure. Call $\hat x$ the \emph{canonical name} of $x$. 

We will use an informal representation of structures, such that
\[
(D_0, D_1, \dots; X_0, X_1, \dots)
\]
is a structure with domains (sorts) $D_0, D_1, \dots$; and predicates and operations $X_0, X_1, \dots$. We regard $(D_0, D_1, \dots; X_0, X_1, \dots)$ as encoding $D_0, \dots, X_0, \dots$ in a way that is applicable to proper classes, as well as sets. 

We suppose that for any signature $\rho$, the expression-building operations for the corresponding language $\lang L^\rho$ are uniformly defined in terms of $0$ and $\add$ in such a way that the rank of any expression is greater than the rank of any of its subexpressions. If $\rho \in \HF$ then $\lang L^\rho \subseteq \HF$.

\subsection{Partial satisfaction relations}

Suppose $\epsilon$ is an expression. Then $\Free \epsilon$ \teqdef the set of free variables of $\epsilon$. $A$ is an $\str S$-assignment for $\epsilon$ \tiffdef $A$ is a finite function into $|\str S|$ such that $\Free \epsilon \subseteq \Dom A$. The value of an expression $\epsilon$ at an $\str S$-assignment $A$ for $\epsilon$ is an element of $|\str S|$ if $\epsilon$ is a term and is a member of $2$, i.e., $\{0, 1\}$, if $\epsilon$ is a formula, with $1$ corresponding to `true' and $0$ to `false'.

Suppose $\Phi$ is a class of $\rho$-expressions, $\subclose \Phi$ \teqdef the class of subexpressions of $\Phi$. We regard an expression as a subexpression of itself, so $\subclose \Phi \supseteq \Phi$. Suppose $\str S$ is a $\rho$-structure. A $\Phi$-valuation function for $\str S$ is a function $F$ such that
\begin{denumerate}
\item $\Dom F$ consists of all $\la \epsilon, A\ra$ such that $\epsilon \in \subclose \Phi$ and $A$ is an $\str S$-assignment for $\epsilon$;
\item $F\la \epsilon, A\ra$ is in $|\str S|$ if $\epsilon$ is a term and in $2$ if $\epsilon$ is a formula;
\item $F$ satisfies the usual recursive definition of valuation.
\end{denumerate}
$F$ is a \emph{partial} valuation function for $\str S$ \tiffdef $F$ is a $\Phi$-valuation for some class $\Phi$ of $\rho$-expressions.

Suppose $\str S$ is a $\rho$-structure. It is straightforward to show in $\tC$ that partial valuations for $\str S$ agree on their common domain.

The following theorem is essentially trivial, but we take the time to state it and sketch the proof to point out where the corresponding proof for the class $\mathcal F^\rho$ of all $\rho$-formulas fails when $\str S$ is a proper class. Here, as elsewhere, we indicate the theory within which a theorem is stated and proved; in this case it is $\tC$. We do the same for definitions.
\m{m}
\begin{theorem}
\label{m}
{\normalfont[$\tC$]} Suppose $\str S$ is a $\rho$-structure. Then there is a unique $\mathcal T^\rho$-valuation function for $\str S$, where $\mathcal T^\rho$ is the class of $\rho$-terms. 
\end{theorem}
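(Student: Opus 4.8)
The plan is to separate the two halves. Uniqueness is easy and essentially already in hand: two $\mathcal T^\rho$-valuation functions for $\str S$ have the same domain by clause (1), so the remark just above --- that partial valuations for $\str S$ agree on their common domain --- settles it at once; alternatively one argues directly, by $\in$-induction on $\operatorname{rank}\epsilon$ (legitimate since every subterm of $\epsilon$ has strictly smaller rank), that $F\la\epsilon,A\ra = F'\la\epsilon,A\ra$ for all $A$ whenever $F,F'$ are such functions. So the substance of the theorem is existence, and I would obtain it by the standard ``approximation'' construction, taking care to see why it goes through in $\tC$ (where $\HF$, hence $\mathcal T^\rho$, may be a proper class).

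First I would call a set $g$ an \emph{approximation} if $g$ is a finite function whose domain is a set of pairs $\la\delta,B\ra$ with $\delta$ a $\rho$-term and $B$ an $\str S$-assignment for $\delta$, closed under passing from $\la\mathsf{op}(\delta_0,\dots,\delta_{k-1}),B\ra$ to each $\la\delta_i,B\ra$, with $g\la\delta,B\ra\in|\str S|$ and $g$ obeying the usual recursive valuation clauses on its domain. Two routine lemmas are needed. (i) Any two approximations agree on their common domain --- by $\in$-induction on rank, exactly as for uniqueness. (ii) For every $\rho$-term $\epsilon$ and every $\str S$-assignment $A$ for $\epsilon$ there is an approximation $g$ with $\la\epsilon,A\ra\in\Dom g$ --- proved by induction on $\operatorname{rank}\epsilon$: if $\epsilon$ is a variable $\vbl$, take $g=\{\la\la\vbl,A\ra,A(\vbl)\ra\}$; if $\epsilon$ is $\mathsf{op}(\delta_0,\dots,\delta_{k-1})$, use the induction hypothesis to get approximations $g_0,\dots,g_{k-1}$ with $\la\delta_i,A\ra\in\Dom g_i$, observe via (i) that $g' = g_0\cup\dots\cup g_{k-1}$ is again an approximation, and set $g = g'\cup\{\la\la\epsilon,A\ra,\ \mathsf{op}^{\str S}(g'\la\delta_0,A\ra,\dots,g'\la\delta_{k-1},A\ra)\ra\}$. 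Every object built here is a set, so the unions and pairings are available in $\tC$ even without \axiom{Infinity}; verifying that the glued $g'$ remains a set and an approximation, and that the induction is otherwise sound, is the step I expect to require the most care.

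Then I would define $F$ by class comprehension, with the class $\str S$ as a parameter and no bound class variables:
\[
F = \{\,\la\la\epsilon,A\ra,v\ra : \text{some approximation } g \text{ has } g\la\epsilon,A\ra = v\,\}.
\]
Lemma (i) makes $F$ a function, lemma (ii) shows $\Dom F$ is exactly what clause (1) demands, and $F$ satisfies the recursive clauses (3) because each value those clauses require of $F\la\mathsf{op}(\vec\delta),A\ra$ is realized inside a single approximation and approximations agree. Uniqueness, established above, completes the proof.

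Finally, for the ``where it fails'' remark that motivates stating the theorem: for the class $\mathcal F^\rho$ of all $\rho$-formulas the only step that breaks, and only when $\str S$ is a proper class, is lemma (ii). The clause for $\opforall\vbl\,\phi$ forces any approximation containing $\la\opforall\vbl\,\phi,A\ra$ also to contain $\la\phi, A[\vbl\mapsto a]\ra$ for every $a\in|\str S|$ --- a proper class of distinct pairs, hence not the domain of any set function --- so the induction collapses at the quantifier step. The term case is immune precisely because the valuation clause for a term invokes only the finitely many values at its immediate subterms, so the minimal approximation covering $\la\epsilon,A\ra$ stays finite.
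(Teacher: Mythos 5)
Your proposal is correct and follows essentially the same route as the paper: both reduce existence to finite (hence set-sized) partial valuation functions covering a given $\la\tau,A\ra$ --- your ``approximations'' are the paper's $\la\tau,A\ra$-valuation functions --- so that only set quantification is needed to define $F$, with uniqueness and agreement on common domains handled as you describe. The only cosmetic difference is that the paper establishes the existence of these finite pieces by a minimal-counterexample argument over the class of bad terms rather than by your explicit induction on rank, and your closing remark about where the quantifier clause breaks for proper class structures matches the paper's discussion following the theorem.
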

\begin{proof} Suppose $\tau$ is a term and $A$ is an assignment for $\tau$. $F$ is a $\la \tau, A\ra$-valuation function for $\str S$ \tiffdef $F$ satisfies the usual definition of valuation function for pairs $\la \tau', A \ra$, where $\tau'$ is a subexpression of $\tau$. Note that $\Free \tau' \subseteq \Free \tau$, so $A$ is an assignment for $\tau'$. A $\la \tau, A\ra$-valuation function is finite and is therefore a set. We now show that for any term $\tau$ and assignment $A$ for $\tau$, there is a unique $\la \tau, A \ra$-valuation function, by supposing toward a contradiction that the class $C$ of terms $\tau'$ with $\Free \tau' \subseteq \Dom A$ for which there is no $\la \tau', A\ra$-valuation function, is nonempty. The definition of $C$ employs only set-quantification, so $C$ exists. Let $\tau_0 \in C$ be of minimal complexity. One easily derives a contradiction.

Now we can define the value of $\tau$ at $A$ as the value assigned to $\tau$ by the unique $\la \tau, A \ra$-valuation function. Again, the quantification over $\la \tau, A \ra$-valuation functions is set-quantification, so the valuation function exists.
\end{proof}

The same argument works for quantifier-free formulas $\phi$, because in this case, an assignment for $\phi$ is also an assignment for any subexpression of $\phi$; but this is not the case if $\phi$ contains quantification. For example, if $\phi = \opexists v\,\psi$, the definition of the value of $\phi$ at $A$ involves the values of $\psi$ at assignments $A \cup \{ (v, a) \}$ for $\psi$, where $a$ ranges over $|\str S|$. Thus, in defining the value of $\phi$ at $A$, it is not enough to quantify over $\la \phi, A \ra$-valuation functions as in the proof of Theorem~\ref{m}; instead, we must quantify over $\{\psi\}$-valuation functions for $\psi$ a subformula of $\phi$. These are proper classes if $\str S$ is a proper class, so we have no justification in $\tC$ for concluding that a class exists such as $C$ in the proof of Theorem~\ref{m}.

It is conventional to speak of valuation of formulas in terms of \emph{satisfaction}. Given a valuation function $F$, the corresponding satisfaction relation $S$ is given by
\[
\la \phi, A \ra \in S \ciff F\la \phi, A \ra = 1.
\]
We adapt the usual symbol for satisfaction to the representation of partial satisfaction by letting $\models^S \phi[A]$ \teqdef $\la \phi, A\ra \in S$ when $S$ is a $\{\phi\}$-satisfaction relation for a structure $\str S$, and $A$ is an $\str S$-assignment for $\phi$. 
The preceding discussion shows that we may not be able to prove in $\tC$ the existence of a full satisfaction relation for a proper class structure. Indeed, it shows that we may not be able to prove the ostensibly weaker statement that for every formula $\phi$ there is a $\{\phi\}$-satisfaction relation.

We do, however, have the following theorem.
\m{n}
\begin{theorem}
\label{n}
{\normalfont [$\tC$]}
Suppose $\str S$ is a $\rho$-structure, $\psi$ and $\psi'$ are $\rho$-formulas, and $\{\psi\}$- and $\{\psi'\}$-satisfaction relations exist for $\str S$. Suppose $\phi$ is obtained from $\psi$ and/or $\psi'$ by a single formula-building operation: $\opneg, \opor, \opand, \opimplies, \opiff, \opexists v, \opforall v$. Then a $\{\phi\}$-satisfaction relation exists.
\end{theorem}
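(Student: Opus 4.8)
The crux is that a $\{\psi\}$-satisfaction relation is, by definition, already defined on the whole class $\subclose{\{\psi\}}$ of subexpressions of $\psi$, and that each of the operations $\opneg,\opor,\opand,\opimplies,\opiff,\opexists v,\opforall v$ strictly increases the rank of an expression. Consequently $\phi$ is a subexpression of neither $\psi$ nor $\psi'$, and $\subclose{\{\phi\}}=\{\phi\}\cup\subclose{\{\psi\}}\cup\subclose{\{\psi'\}}$ (with the $\psi'$ term absent in the unary cases $\opneg,\opexists v,\opforall v$, where only $\psi$ is involved and, below, all references to $\psi'$, $S_{\psi'}$, and $F_{\psi'}$ are simply dropped). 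The plan is to glue the given satisfaction relations together — which is legitimate because, as noted just before Theorem~\ref{m}, any two partial valuations for $\str S$ agree on their common domain — and then adjoin a single new layer consisting of the pairs $\la\phi,A\ra$, for $A$ an $\str S$-assignment for $\phi$, each receiving the value dictated by the recursion clause for the operation that builds $\phi$.

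In detail, I would fix valuation functions $F_\psi$ and $F_{\psi'}$ underlying $S_\psi$ and $S_{\psi'}$. By the agreement fact just cited, $F_\psi\cup F_{\psi'}$ is a well-defined function, and it is readily checked to be a $\{\psi,\psi'\}$-valuation function for $\str S$: its domain is the set of $\la\epsilon,A\ra$ with $\epsilon$ a subexpression of $\psi$ or of $\psi'$ and $A$ an $\str S$-assignment for $\epsilon$, and the recursion clauses hold on it because each of $\subclose{\{\psi\}}$ and $\subclose{\{\psi'\}}$ is closed under subexpressions (this covers the quantifier clauses too, since if $\opexists v\,\epsilon\in\subclose{\{\psi\}}$ then every pair $\la\epsilon,A'\ra$ with $A'$ an assignment for $\epsilon$ lies in $\Dom F_\psi$). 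Then I would define $S$ to be $S_\psi\cup S_{\psi'}$ together with all pairs $\la\phi,A\ra$, $A$ an $\str S$-assignment for $\phi$, for which the appropriate clause holds: if $\phi=\psi\opand\psi'$, that $\la\psi,A\ra\in S_\psi$ and $\la\psi',A\ra\in S_{\psi'}$ — such an $A$ is automatically an $\str S$-assignment for $\psi$ and for $\psi'$, since $\Free\psi,\Free\psi'\subseteq\Free\phi$ — and similarly for $\opor,\opimplies,\opiff$; if $\phi=\opneg\psi$, that $\la\psi,A\ra\notin S_\psi$; if $\phi=\opexists v\,\psi$, that there is $a\in|\str S|$ with $\la\psi,A'\ra\in S_\psi$, where $A'$ is $A$ updated to send $v$ to $a$ — which is an $\str S$-assignment for $\psi$, so that $\la\psi,A'\ra$ does lie in the domain of $S_\psi$ — and dually for $\opforall v$.

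To finish, I would verify that $S$ is a $\{\phi\}$-satisfaction relation, i.e. the satisfaction part of a $\{\phi\}$-valuation function for $\str S$, namely $F_\psi\cup F_{\psi'}$ extended by assigning to each pair $\la\phi,A\ra$ (with $A$ an $\str S$-assignment for $\phi$) the value $1$ if the clause above holds and $0$ otherwise. Because $\phi$ has strictly greater rank than any of its proper subexpressions, these pairs are new to the domain, so nothing is overwritten; the domain becomes exactly the set of $\la\epsilon,A\ra$ with $\epsilon\in\subclose{\{\phi\}}$ and $A$ an $\str S$-assignment for $\epsilon$; formula-values lie in $2$; and the recursion clauses hold — those at subexpressions of $\psi$ or of $\psi'$ because $F_\psi\cup F_{\psi'}$ already satisfied them and the new layer does not touch those pairs, and the clause at $\la\phi,A\ra$ by the definition of the new layer, since the immediate subexpressions of $\phi$ are $\psi$ (and, in the binary cases, $\psi'$), which sit in the old domain carrying the values recorded by $S_\psi$ and $S_{\psi'}$. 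All of this is routine bookkeeping.

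The step I expect to be the only real obstacle is justifying that the class $S$ exists. The condition defining $S$ has $S_\psi$, $S_{\psi'}$, $\str S$, $\phi$, $\psi$, $\psi'$, $v$ as parameters and its only quantifier — the witness $a$ in the quantifier cases — ranges over a set-variable, so there are no bound class variables and the class-comprehension schema of $\tC$ applies. The subtlety, and the reason the theorem deserves a proof, is precisely this: in the quantifier cases we do quantify over all of $|\str S|$, which may be a proper class, but that is harmless for predicative comprehension, whereas what must be scrupulously avoided is quantifying over $\{\psi\}$-valuation or satisfaction relations for $\str S$ (proper classes when $\str S$ is), which is exactly the move that blocks the naive construction of a full satisfaction relation discussed after Theorem~\ref{m}. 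Since $S_\psi$ and $S_{\psi'}$ are supplied to us as objects rather than quantified into existence, no such quantification arises.
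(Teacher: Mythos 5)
Your proposal is correct and is exactly the ``straightforward'' argument the paper has in mind (the paper's proof consists of the single word ``Straightforward''): glue the given relations, which agree on their common domain, and adjoin the $\la\phi,A\ra$ layer by a comprehension instance whose only bound variables are set variables. Your closing remark correctly identifies why this works where the construction of a full satisfaction relation fails, namely that $S_\psi$ and $S_{\psi'}$ enter only as class parameters rather than as bound class variables.
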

\begin{proof}
Straightforward.
\end{proof}

\m{v}
\begin{definition}
\label{v}
{\normalfont [$\tC$]}
Suppose $\rho \subseteq \HF$ and $n \in \omega$. Let $\Phi^\rho_n$ \teqdef the set of $\rho$-formulas of rank $< n$, i.e., $\mathcal F^\rho \cap V_n$.
\end{definition}
Note that since the expression-building operations are rank-increasing, $\subclose {\Phi^\rho_n} = \Phi^\rho_n$. If $\rho$ is $\HF$, we may use the canonical naming convention to formulate the following theorem, which may be called a metatheorem, inasmuch as it states that an infinite collection of sentences are theorems of $\tC$. The theorem itself is formulated and proved in $\tS$.
\m{o}
\begin{theorem}
\label{o}
{\normalfont [$\tS$]}
Suppose $\rho$ is a signature and $\rho$ is $\HF$. Suppose $n$ is a finite ordinal. Let $\hat \rho$ and $\hat n$ be the canonical names for $\rho$ and $n$. Then $\tC \proves$ \bmob{}for every $\obsub{\hat \rho}$-structure $\str S$ there exists a $\Phi^{\obsub{\hat \rho}}_{\obsub{\hat n}}$-satisfaction relation for $\str S$\emob.
\end{theorem}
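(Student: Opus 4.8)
The plan is to argue by induction on $n$, with $\rho$ held fixed as a parameter throughout, the entire argument being carried out in the metatheory $\tS$. For each $\rho$ (a signature that is $\HF$) and each finite $n$, let $\sigma_{\rho,n}$ be the single sentence asserting ``for every $\hat\rho$-structure $\str S$ there exists a $\Phi^{\hat\rho}_{\hat n}$-satisfaction relation for $\str S$''. Since $\rho$ is $\HF$ and $n$ is finite, $\Phi^\rho_n=\mathcal F^\rho\cap V_n$ is a finite set of formulas computable from $\rho$ and $n$, with $\subclose{\Phi^\rho_n}=\Phi^\rho_n$, so $\sigma_{\rho,n}$ is genuinely a sentence and $(\rho,n)\mapsto\sigma_{\rho,n}$ is primitive recursive. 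What I must exhibit is a primitive recursive function $(\rho,n)\mapsto\pi_{\rho,n}$, provably total in $\tS$, together with an $\tS$-proof that for all such $\rho$ and $n$ the object $\pi_{\rho,n}$ is a $\tC$-derivation of $\sigma_{\rho,n}$; this is exactly the content of ``$\tC\proves\sigma_{\rho,n}$'' uniformly.

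\emph{Base case.} $V_0=\emptyset$, so $\Phi^\rho_0=\emptyset$ and the empty function is vacuously a $\Phi^\rho_0$-satisfaction relation for every structure; the corresponding $\tC$-derivation is trivial and plainly uniform in $\rho$, giving $\pi_{\rho,0}$.

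\emph{Inductive step $n\rightsquigarrow n+1$.} Inside $\tC$ one argues as follows, given a $\rho$-structure $\str S$. By the induction hypothesis---that is, by invoking $\pi_{\rho,n}$---fix a $\Phi^\rho_n$-satisfaction relation $S$. The members of $\Phi^\rho_{n+1}\setminus\Phi^\rho_n$ are precisely the $\rho$-formulas of rank exactly $n$; enumerate them $\phi_1,\dots,\phi_m$ (a list explicitly computed from $\rho$ and $n$). For each $i$: if $\phi_i$ is atomic, apply Theorem~\ref{m} to obtain the (unique) $\mathcal T^\rho$-valuation function for $\str S$ and read off from it---together with the interpretation in $\str S$ of the predicate symbol of $\phi_i$---a $\{\phi_i\}$-satisfaction relation $S_i$ (it exists by class comprehension, the defining condition quantifying only over sets); otherwise $\phi_i$ is obtained from formulas $\psi_i$ and/or $\psi_i'$ by a single formula-building operation, and since a proper subexpression has strictly smaller rank, $\psi_i,\psi_i'\in\Phi^\rho_n$, so restricting $S$ to the pairs whose first coordinate lies in $\subclose{\{\psi_i\}}$ (resp.\ $\subclose{\{\psi_i'\}}$) yields $\{\psi_i\}$- and $\{\psi_i'\}$-satisfaction relations, whence Theorem~\ref{n} supplies a $\{\phi_i\}$-satisfaction relation $S_i$. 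Now set $S^+=S\cup S_1\cup\dots\cup S_m$: a finite union of classes is a class in $\tC$; partial valuations for $\str S$ agree on their common domain, so $S^+$ is a function; and $\Dom S^+$ consists of all $\la\epsilon,A\ra$ with $\epsilon\in\subclose{\Phi^\rho_{n+1}}=\Phi^\rho_{n+1}$ and $A$ an $\str S$-assignment for $\epsilon$. Hence $S^+$ is the desired $\Phi^\rho_{n+1}$-satisfaction relation. Every step is effective and uniform in $\rho$ and $n$, so this gives a primitive recursive recipe $\pi_{\rho,n}\mapsto\pi_{\rho,n+1}$.

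\emph{Main obstacle.} The mathematical substance is no more than Theorems~\ref{m} and~\ref{n} plus the remark that partial valuations agree on their common domain; the delicate part is the bookkeeping. One must check that $\Phi^\rho_{n+1}=\Phi^\rho_n\cup\{\phi_1,\dots,\phi_m\}$ with each new $\phi_i$ either atomic or exactly one formula-building operation above $\Phi^\rho_n$ (which uses rank-monotonicity of subexpressions), that forming the finite union of the possibly proper-class relations $S,S_1,\dots,S_m$ is legitimate in $\tC$ and well defined, and---most importantly---that the passage from ``$\tC\proves\sigma_{\rho,n}$'' to ``$\tC\proves\sigma_{\rho,n+1}$'' is realized by a \emph{single} $\tS$-proof with $\rho$ and $n$ free, so that $\tS$-induction on $n$ delivers the full metatheorem. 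I expect this uniform formalization in $\tS$---verifying within $\tS$ that the proof-transformation always produces genuine $\tC$-derivations---to be the principal, though routine, obstacle.
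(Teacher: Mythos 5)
Your proposal is correct and follows essentially the same route as the paper, whose proof is just the one-line remark that one fixes $\rho$ and inducts on the complexity of formulas using the fact that $\tC$ proves Theorem~\ref{n}. Your additional care with the atomic case (via Theorem~\ref{m}), the union step, and the uniformity of the proof-transformation in $\tS$ simply fills in details the paper leaves implicit.
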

\begin{proof}
Let $\rho$ be fixed. We proceed induction on the complexity of formulas using the fact that $\tC$ proves Theorem~\ref{n}.
\end{proof}

\m{r}
\begin{definition}
\label{r}
{\normalfont [$\tC$]}
Suppose $\str S$ is a $\rho$-structure.
\begin{denumerate}
\item $\str S$ is \emph{weakly satisfactory} \tiffdef for every $\rho$-formula $\phi$ there exists a $\{\phi\}$-satisfaction relation for $\str S$, equivalently, for every finite $\Phi$ there is a $\Phi$-satisfaction relation.
\item $\str S$ is \emph{satisfactory} \tiffdef there exists a full satisfaction relation, i.e., an $\mathcal F^\rho$-satisfaction relation, for $\str S$, where $\mathcal F^\rho$ is the class of all $\rho$-formulas.
\end{denumerate}
\end{definition}

\m{w}
\begin{theorem}
\label{w}
{\normalfont [$\tC$]}
Suppose $\str S$ is a structure. If $\str S$ is a set then $\str S$ is satisfactory, i.e., the full satisfaction relation for $\str S$ exists. 
\end{theorem}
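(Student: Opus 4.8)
The plan is to construct the full satisfaction relation $S$ for $\str S$ as the one associated with a valuation function $F$ that is assembled from set-sized pieces — one for each rank-bounded fragment $\Phi^\rho_n$ of $\mathcal F^\rho$ (Definition~\ref{v}). The crux of the matter, and precisely the spot at which the analogous construction fails for a proper class $\str S$, is that when $\str S$ is a set each of these pieces is itself a \emph{set}, so that $F$ can be defined by a class comprehension quantifying only over sets, even though $\mathcal F^\rho$ — and hence $F$ and $S$ — may well be a proper class (which already happens for a finite signature $\rho$ when $\axiom{Infinity}$ fails). When $\mathcal F^\rho$ happens to be a set one could instead simply run the Tarski recursion directly; the argument sketched here is the uniform version.

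First I would fix $n \in \omega$ and note that $V_n$ is hereditarily finite (immediate induction on $n$), so that $\Phi^\rho_n = \mathcal F^\rho \cap V_n$ is a finite set, closed under subexpressions because the expression-building operations are rank-increasing (the remark following Definition~\ref{v}). A routine induction on $n$, carried out in $\tC$ for the fixed $\str S$ and using Theorem~\ref{m} at the atomic formulas (whose proper subexpressions are terms) together with Theorem~\ref{n} at the inductive steps — essentially the induction behind Theorem~\ref{o} — then yields, for every $n$, a $\Phi^\rho_n$-valuation function $F_n$ for $\str S$. Each $F_n$ is a set: $\Dom F_n$ consists of the pairs $\la\epsilon, A\ra$ with $\epsilon$ a subexpression of some formula of the finite set $\Phi^\rho_n$ and $A$ an $\str S$-assignment for $\epsilon$, and for each such $\epsilon$ the assignments for $\epsilon$ form a set, being functions from the finite set $\Free\epsilon$ into the set $|\str S|$. (The inductions over $\omega$ here are ordinary class inductions; $\omega$ may be a proper class, but $\axiom{Infinity}$ is not used.)

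Next I would invoke the agreement property of partial valuations for $\str S$ noted just before Theorem~\ref{m}: it renders each $F_n$ unique and forces the family to cohere, $F_m \subseteq F_n$ whenever $m \le n$. Then $F := \bigcup_{n \in \omega} F_n$ is a legitimate class of $\tC$, since membership in it is expressed by a formula with only set-quantifiers,
\[
z \in F \;\iffdef\; \exists n \in \omega \; \exists G \, \bigl( G \text{ is the } \Phi^\rho_n\text{-valuation function of } \str S \text{ and } z \in G \bigr).
\]

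It remains to verify that $F$ is a full, i.e. $\mathcal F^\rho$-, valuation function for $\str S$. Its domain is exactly the class of all $\la\epsilon, A\ra$ with $\epsilon$ a subexpression of a $\rho$-formula and $A$ an $\str S$-assignment for $\epsilon$: any such $\epsilon$ has some finite rank $n$, hence is a subexpression of a formula in $\Phi^\rho_{n+1}$, so $\la\epsilon, A\ra \in \Dom F_{n+1} \subseteq \Dom F$; and $F$ takes the prescribed values. Each clause of the recursive definition of valuation, applied at $\la\epsilon, A\ra$, mentions only $\epsilon$ and finitely many subexpressions of $\epsilon$, all of smaller rank and hence all in $\Dom F_{n+1}$ where $n$ is the rank of $\epsilon$; since $F_{n+1}$ obeys the recursion, so does $F$. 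Consequently the associated satisfaction relation $S = \{\la\phi, A\ra : \phi \in \mathcal F^\rho,\ A \text{ an } \str S\text{-assignment for } \phi,\ F\la\phi, A\ra = 1\}$ is a — and, by the agreement property, the — full satisfaction relation for $\str S$, so $\str S$ is satisfactory. The only delicate point in all of this is the observation that each $F_n$ is a set and that $F$ is therefore obtained by a set-quantified comprehension; this is exactly the step that has no counterpart for a proper class $\str S$, as explained in the discussion following Theorem~\ref{m}.
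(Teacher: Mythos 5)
Your overall strategy---stratify the Tarski recursion by rank, observe that each stratum is a set because $\str S$ is, and recover the full valuation by a set-quantified comprehension---is the right one, and is presumably what the paper's one-word proof intends. But the one claim you yourself single out as the crux, that each $F_n$ is a set, is false as you have justified it. Under the paper's definition, an $\str S$-assignment for $\epsilon$ is a finite function into $|\str S|$ whose domain merely \emph{contains} $\Free\epsilon$; it need not equal $\Free\epsilon$. (This slack is not optional: the quantifier clause evaluates $\opexists v\,\psi$ at $A$ by consulting $\psi$ at $A\cup\{(v,a)\}$, whose domain may properly contain $\Free\psi$.) The theorem is stated in $\tC$, where $\axiom{Infinity}$ may fail; in that case the variables form a proper class, hence so does the class of finite functions from finite sets of variables into a nonempty set $|\str S|$, hence so does the class of assignments for a single $\epsilon$, hence so does $F_n$ itself. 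Your comprehension ``$z\in F$ iff $\exists n\,\exists G\,(G$ is the $\Phi^\rho_n$-valuation function and $z\in G)$'' is then either an illegitimate instance of conservative comprehension (if $G$ is a bound class variable) or defines $F=0$ (if $G$ ranges over sets, since no set is the $\Phi^\rho_n$-valuation function). The same defect infects the induction producing $F_n$: the minimal-counterexample argument of Theorem~\ref{m} needs the candidate valuation functions to be sets so that the class of bad indices is given by set quantification.

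The repair is routine but must be made. Stratify by a finite set $D$ of variables as well as by rank: for $n\in\omega$ and finite $D$, let $F_n^D$ be the valuation function for expressions in $\subclose{\Phi^\rho_n}$ restricted to assignments whose domain is contained in the union of $D$ with the (finite) set of variables belonging to $V_n$. The recursion clauses never leave this family of assignments (bound variables of formulas in $V_n$ lie in $V_n$), and the family is a set---finite functions from subsets of a fixed finite set into the set $|\str S|$, which is where the hypothesis that $\str S$ is a set actually enters. Each $F_n^D$ therefore exists, is unique, and is a set, by exactly your induction; the pieces cohere by the agreement property; and the union over all $n$ and all finite $D$ is a legitimate set-quantified comprehension. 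Your verification that the result is a full valuation function for $\str S$ then goes through unchanged.
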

\begin{proof}
Straightforward.
\end{proof}

\subsection{The universal satisfaction predicate}

As noted above, the existence of satisfaction relations for proper class structures is problematic, and the following definition is useful in this context.
\m{t}
\begin{definition}
\label{t}
{\normalfont [$\tC$]}
Suppose $\str S$ is a $\rho$-structure.
\begin{denumerate}
\item Suppose $\phi$ is a $\rho$-formula, and $A$ is an $\str S$-assignment for $\phi$. Then $\str S \gbmodels \phi[A]$ \tiffdef for every $\{\phi\}$-satisfaction relation $S$ for $\str S$, $\models^S \phi[A]$.
\item Suppose $\Theta$ is a $\rho$-theory (a class of $\rho$-sentences). $\str S \gbmodels \Theta$ \tiffdef for every $\theta \in \Theta$, $\str S \gbmodels \theta$.
\end{denumerate}
We call $\gbmodels$ the \emph{universal satisfaction predicate}. Note the use of universal, rather than existential, quantification over partial satisfaction relations in the definition of $\gbmodels$.
\end{definition}
The following theorems are relevant. The first is the completeness theorem formulated in the essentially finitary theory $\tC$. 
\m{z}
\begin{theorem}
\label{z}
{\normalfont [$\tC$]}
Suppose $\Theta$ is a consistent theory in a countable signature. Then there is a satisfactory structure $\str S$ such that $\str S \models \Theta$.
\end{theorem}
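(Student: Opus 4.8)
The plan is to run the Henkin term-model construction inside $\tC$, taking care that each step remains legitimate when the universe — hence $\omega$, and the term model itself — is a proper class. I would first pass from $\rho$ to a signature $\rho'$ obtained by adjoining new constants $c_0, c_1, \dots$ indexed by $\omega$; since $\Theta$ mentions none of these, $\Theta$ is still consistent as a $\rho'$-theory, and $\rho'$ is again countable. Next, fixing an enumeration $\sigma_0, \sigma_1, \dots$ of the $\rho'$-sentences (a class function on $\omega$), build by recursion along $\omega$ an increasing sequence of \emph{finite} sets of $\rho'$-sentences: $F_0 = \emptyset$; if $\Theta \cup F_n \cup \{\sigma_n\}$ is consistent, then $F_{n+1}$ is $F_n$ together with $\sigma_n$ and, in case $\sigma_n$ is $\opexists v\,\phi$, also $\phi(c)$ for some constant $c$ not occurring in $\Theta \cup F_n \cup \{\sigma_n\}$; otherwise $F_{n+1} = F_n \cup \{\opneg\sigma_n\}$. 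Here ``consistent'' is a set-definable predicate with the class $\Theta$ as a parameter (proofs being finite sequences of formulas, hence sets), and the recursion is an ordinary recursion along $\omega$, available in $\tC$ once one checks that each $F\restriction n$ is a set. Put $T = \Theta \cup \bigcup_n F_n$, a class by class comprehension with parameter $\Theta$. The standard facts then go through verbatim: each $\Theta \cup F_n$ is consistent; any putative proof of a contradiction from $T$ uses only finitely many premises, hence is a proof from some $\Theta \cup F_n$, so $T$ is consistent; $T$ decides every $\rho'$-sentence; and $T$ has the Henkin property.

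Next I would define the term model $\str S'$. Its domain $|\str S'|$ is the class of those closed $\rho'$-terms $t$ that are least, in a fixed definable well-ordering of the $\rho'$-terms, among all closed terms $s$ with $T \proves s \opeq t$. Using these canonical representatives in place of the equivalence classes $\{\,s : T \proves s \opeq t\,\}$ is essential, since such a class may well be proper. Interpret a function symbol $g$ by sending $\langle t_1,\dots,t_k\rangle$ to the canonical representative of $g(t_1,\dots,t_k)$, interpret $\opeq$ as genuine equality of canonical representatives, and interpret every other relation symbol $R$ by: $R^{\str S'}(t_1,\dots,t_k)$ iff $T \proves R(t_1,\dots,t_k)$. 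All of this is set-definable with $T$ and the well-ordering as parameters, so $\str S'$ is a bona fide — typically proper-class — $\rho'$-structure; by Theorem~\ref{m} it carries the unique $\mathcal T^{\rho'}$-valuation function, and one checks by induction on $t$ that the value of $t$ at an assignment $A$ into $|\str S'|$ is the canonical representative of the closed term $t^A$ obtained by substituting $A(v)$ for each $v \in \Free t$.

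The crucial step is that, even though $\str S'$ may be a proper class, its \emph{full} satisfaction relation exists, because it admits a direct, non-recursive definition. Let
\[
S' \;=\; \{\, \langle \phi, A\rangle : \phi \in \mathcal F^{\rho'},\ A \text{ an } \str S'\text{-assignment for } \phi,\ \text{ and } \phi^A \in T \,\},
\]
where $\phi^A$ is the $\rho'$-sentence got from $\phi$ by substituting the closed term $A(v)$ for each $v \in \Free\phi$. Then $S'$ exists by class comprehension with parameters $T$ and $\str S'$ (no bound class variable occurs), and it remains only to verify that $S'$ obeys the recursive clauses of a valuation function — the Truth Lemma: for atomic $\phi$ this uses the term-value computation just mentioned together with the interpretation of the relation symbols; for $\opneg,\opand,\opor,\opimplies,\opiff$ it uses that $T$ is maximal consistent; and for $\opexists v$ and $\opforall v$ it uses the Henkin property, so that $(\opexists v\,\phi)^A \in T$ iff $\phi^{A'} \in T$ for some assignment $A'$ extending $A$ by a value for $v$. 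This is an ordinary induction on the finite complexity of $\rho'$-formulas and presents no difficulty in $\tC$. Hence $\str S'$ is satisfactory; its $\rho$-reduct $\str S := \str S'\restriction\rho$ inherits the full satisfaction relation $S' \restriction \mathcal F^{\rho}$ and is therefore satisfactory as well; and since $\Theta \subseteq T$, every $\theta \in \Theta$ has $\langle\theta,\emptyset\rangle \in S'$, so $\str S \models \Theta$.

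The completeness argument here is the textbook Henkin one; what needs care — and where a careful write-up will spend its effort — is the bookkeeping forced by the possible failure of \axiom{Infinity}. Specifically, one must ensure that (i) the recursion producing $T$ is a legitimate recursion along the (possibly proper) class $\omega$ with set-sized finite approximations, (ii) the domain of the term model consists of canonical representatives, so that $|\str S'|$ is a class of sets rather than a ``class of proper classes'', and (iii) $T$, $\str S'$, and $S'$ are each introduced by instances of class comprehension free of bound class variables. Granting the standard properties of $\tC$ (recursion on $\omega$, finiteness of proofs, conservativity of adjoining constants), each of these is routine, and I expect item (i) to be the most delicate to write out in full.
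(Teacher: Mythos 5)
Your proposal is correct and is essentially the paper's own argument: the paper's proof is a three-sentence sketch of exactly this Henkin construction, with the decisive point in both being that the complete Henkin theory $T$ yields the \emph{full} satisfaction relation for the term model by a direct, non-recursive class-comprehension definition, so that satisfactoriness holds even when the model is a proper class. Your additional bookkeeping (canonical representatives in place of possibly proper equivalence classes, set-sized approximations in the recursion along $\omega$) is just the detail the paper elides.
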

\begin{proof}
The Henkin construction of a model for a consistent theory $\Theta$ proceeds by defining a complete consistent extension $\Theta'$ of $\Theta$ with witnesses, which are constants in an expanded signature $\spl$. $\str S$ is defined as the structure whose individuals are the $\spl$-terms and whose predicates and operations are given by $\Theta'$. $\Theta'$ also gives the full satisfaction relation for $\str S$.
\end{proof}

\m{bl}
\begin{theorem}
\label{bl}
{\normalfont [$\tC$]}
Suppose $\str S$ is a weakly satisfactory $\rho$-structure, $\Theta$ is a $\rho$-theory, $\sigma$ is a $\rho$-sentence, $\str S \gbmodels \Theta$, and $\Theta \proves \sigma$. Then $\str S \gbmodels \sigma$.
\end{theorem}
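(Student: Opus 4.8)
\section*{Proof proposal}

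The plan is to reduce the statement to the ordinary (finitary) soundness theorem applied to a single finite satisfaction relation, exploiting only weak satisfactoriness of $\str S$ together with the fact noted above that partial valuations for $\str S$ agree on their common domain. The conceptual point is that, although no full satisfaction relation for $\str S$ need exist, the derivation witnessing $\Theta \proves \sigma$ is a finite object, so it ``lives inside'' a finite fragment of the language, and on such a fragment weak satisfactoriness already supplies a genuine satisfaction relation to run soundness against.

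First I would fix a derivation of $\sigma$ from $\Theta$. Being finite, it uses only finitely many sentences $\theta_0, \dots, \theta_{k-1} \in \Theta$, and only finitely many $\rho$-formulas occur in it altogether; let $\Phi_0$ be the set of all these formulas together with $\sigma$, and let $\Phi = \subclose{\Phi_0}$, which is again finite. By the ``equivalently'' clause of Definition~\ref{r}.1, since $\str S$ is weakly satisfactory there is a $\Phi$-satisfaction relation $S$ for $\str S$. Because each $\theta_i \in \Phi$ and $\Phi = \subclose \Phi$, the restriction of $S$ to pairs $\la \epsilon, A\ra$ with $\epsilon \in \subclose{\{\theta_i\}}$ is a $\{\theta_i\}$-satisfaction relation for $\str S$; so the hypothesis $\str S \gbmodels \theta_i$ (Definition~\ref{t}) yields $\models^S \theta_i$ for each $i < k$.

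Next I would run the usual soundness argument, localized to $\Phi$: by induction on the derivation one shows that $S$ respects it --- logical axioms receive value $1$ under every $\str S$-assignment by the recursive valuation clauses that $S$ obeys, the members of $\Theta$ occurring in it are handled by the previous paragraph (they are sentences), and each rule of inference preserves satisfaction-under-all-assignments. Every formula consulted in this induction lies in $\Phi$ and has all its subformulas in $\Phi$, on which $S$ is defined, so the induction is legitimate, and since the derivation, $\Phi$, and $S$ are all sets, it is entirely finitary and carries out in $\tC$. In particular $\models^S \sigma$ (under the empty assignment, $\sigma$ being a sentence).

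Finally I would remove the dependence on $S$: if $S'$ is any $\{\sigma\}$-satisfaction relation for $\str S$, then $S$ and $S'$ are both partial valuations for $\str S$ whose domains contain $\la \sigma, \emptyset\ra$, so they agree there, whence $\models^{S'} \sigma$. As $S'$ was arbitrary, $\str S \gbmodels \sigma$ by Definition~\ref{t}. The only step that calls for any care is the localized soundness induction, and even there the difficulty is purely bookkeeping; nothing intrinsically infinitary arises, which is precisely why weak satisfactoriness --- rather than full satisfactoriness --- of $\str S$ is all that is needed.
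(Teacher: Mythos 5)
Your proposal is correct and follows essentially the same route as the paper's proof: take the finite set of formulas occurring in a derivation of $\sigma$ from $\Theta$, obtain a $\Phi$-satisfaction relation from weak satisfactoriness, verify the premises via $\str S \gbmodels \Theta$, and run a localized soundness induction to get $\models^S \sigma$. Your final step---transferring $\models^S \sigma$ to an arbitrary $\{\sigma\}$-satisfaction relation via agreement of partial valuations on common domains---is exactly the detail the paper leaves implicit in ``so $\str S \gbmodels \sigma$.''
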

\begin{proof}
Suppose $\pi$ is a proof of $\sigma$ from $\Theta$. Let $\Phi$ be the set of formulas occurring in $\pi$, and let $S$ be a $\Phi$-satisfaction relation for $\str S$. Each premise $\theta$ of $\pi$ is then a member of $\Theta$, so $\models^S \theta$. It is straightforward to show that $\models^S \sigma$, so $\str S \gbmodels \sigma$.
\end{proof}

\m{s}
\begin{corollary}
\label{s}
{\normalfont [$\tC$]}
Suppose $\str S$ is a weakly satisfactory $\rho$-structure, $\Theta$ is a $\rho$-theory, and $\str S \gbmodels \Theta$. Then $\Theta$ is consistent.
\end{corollary}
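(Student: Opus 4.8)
The plan is to argue by contradiction using Theorem~\ref{bl}. Suppose $\Theta$ is inconsistent; then $\Theta$ proves every $\rho$-sentence. Fix a closed $\rho$-formula $\sigma$ — for definiteness, say $\sigma$ is $\opforall v_0\,(v_0\opeq v_0)$, which is a theorem of pure logic, so $\Theta\proves\sigma$; and by inconsistency also $\Theta\proves\opneg\sigma$.

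Since $\str S$ is weakly satisfactory and $\str S\gbmodels\Theta$, I would apply Theorem~\ref{bl} to each of the derivations $\Theta\proves\sigma$ and $\Theta\proves\opneg\sigma$, obtaining $\str S\gbmodels\sigma$ and $\str S\gbmodels\opneg\sigma$. Next, using weak satisfactoriness once more, fix a $\{\opneg\sigma\}$-satisfaction relation $S$ for $\str S$ (which exists since $\opneg\sigma$ is a single $\rho$-formula). Because $\subclose\{\sigma\}\subseteq\subclose\{\opneg\sigma\}$, the restriction of $S$ to pairs $\la\epsilon,A\ra$ with $\epsilon\in\subclose\{\sigma\}$ is a $\{\sigma\}$-satisfaction relation for $\str S$. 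Now unwind the definition of $\gbmodels$: from $\str S\gbmodels\opneg\sigma$ we get $\models^S\opneg\sigma$, and from $\str S\gbmodels\sigma$ — applied to the restricted relation, recalling that partial satisfaction relations for $\str S$ agree on their common domain — we get $\models^S\sigma$. But $S$ obeys the recursive clause for $\opneg$ at the empty assignment, so $\models^S\opneg\sigma$ holds precisely when $\models^S\sigma$ does not. This contradiction shows $\Theta$ is consistent.

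I do not expect a genuine obstacle here: this is the standard ``a model implies consistency'' argument, and all of it is plainly formalizable in $\tC$ since it manipulates only one proof, finitely many formulas, and the set $S$, all of which are sets even when $\str S$ is a proper class. The only points worth a moment's care are that $\sigma$ is chosen to be a sentence, so that the sole relevant assignment is the empty one and weak satisfactoriness directly furnishes the needed $\{\opneg\sigma\}$-satisfaction relation without any quantification over proper-class objects; and the trivial observation that a $\{\opneg\sigma\}$-satisfaction relation restricts to a $\{\sigma\}$-satisfaction relation. Alternatively one could take $\sigma$ to be a sentence of the form $\opexists v_0\,\opneg(v_0\opeq v_0)$ and use that every $\{\sigma\}$-satisfaction relation assigns it the value $0$; but the route via $\Theta\proves\sigma$ and $\Theta\proves\opneg\sigma$ confines the semantic reasoning to the single negation clause and is marginally cleaner.
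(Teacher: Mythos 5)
Your proof is correct and follows essentially the same route as the paper: derive $\str S \gbmodels$ a contradictory sentence via Theorem~\ref{bl}, use weak satisfactoriness to produce an actual partial satisfaction relation, and read off the contradiction from the negation clause. The paper merely packages the two derivations into one by working with the single sentence $\sigma \opand \opneg\sigma$ and a $\{\sigma\opand\opneg\sigma\}$-satisfaction relation, which lets it skip your restriction-and-agreement step; the substance is identical.
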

\begin{proof}
Suppose toward a contradiction that $\Theta \proves \sigma \opand \opneg \sigma$ for some $\rho$-sentence $\sigma$. Then $\str S \gbmodels \sigma \opand \opneg \sigma$. Let $S$ be a $\{ \sigma \opand \opneg \sigma \}$-satisfaction relation for $\str S$. Then $\models^S \sigma$ and $\models^S \opneg \sigma$, but the latter implies that $\nmodels^S \sigma$, a contradiction.
\end{proof}

In this connection we note the following theorem.
\m{bm}
\begin{theorem}
\label{bm}
{\normalfont [$\tC$]}
Suppose $\str S$ is a $\rho$-structure and $\sigma$ is a $\rho$-validity, i.e., $\proves \sigma$. Then $\str S \gbmodels \sigma$.
\end{theorem}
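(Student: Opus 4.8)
The plan is to reduce to the usual soundness argument for $\LK$, but to run it along a \emph{cut-free} proof of $\sigma$. The subtlety — and the reason this is not simply the case $\Theta=\emptyset$ of Theorem~\ref{bl} — is that we are not given that $\str S$ is weakly satisfactory, so we cannot just choose a $\Phi$-satisfaction relation for the set $\Phi$ of formulas occurring in a proof of $\sigma$. Instead, starting from an \emph{arbitrary} $\{\sigma\}$-satisfaction relation $S$ for $\str S$, we must manufacture one; this is possible precisely because a cut-free proof of $\sqr\sigma$ mentions only substitution instances of subformulas of $\sigma$, and on subformulas of $\sigma$ the relation $S$ already supplies values at all assignments.

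So: if $\str S$ admits no $\{\sigma\}$-satisfaction relation, then $\str S\gbmodels\sigma$ holds vacuously by Definition~\ref{t}. Otherwise fix a $\{\sigma\}$-satisfaction relation $S$ for $\str S$; it suffices to show $\models^S\sigma$. Since $\proves\sigma$, cut-elimination for $\LK$ (a theorem of $\tC$) yields a cut-free $\LK$-proof $\pi$ of $\sqr\sigma$, and by the subformula property of cut-free proofs every formula occurring in $\pi$ has the form $\chi\big(\ffthree{ v_0 & \dotsm & v_{k-1} \\ \tau_0 & \dotsm & \tau_{k-1} }\big)$ for some subformula $\chi$ of $\sigma$ and terms $\tau_0,\dots,\tau_{k-1}$. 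Let $T$ be the unique $\mathcal T^\rho$-valuation function for $\str S$ furnished by Theorem~\ref{m}. I would then define, from the classes $S$ and $T$, a relation $S^+$ on the pairs $\la\phi,A\ra$ with $\phi$ a subformula of a formula of $\pi$ and $A$ an $\str S$-assignment for $\phi$, by stipulating that a formula $\chi(\vec\tau)$ as displayed is satisfied by $S^+$ at $A$ iff $\chi$ is satisfied by $S$ at the assignment obtained from $A$ by reassigning each $v_i$ to $T\la\tau_i,A\ra$. Since $S^+$ is definable with parameters $S$ and $T$ and otherwise only set-quantification, it exists in $\tC$. One must then check that this value is independent of the chosen representation $\chi(\vec\tau)$ — the substitution lemma for partial satisfaction relations — and that $S^+$ obeys the recursive satisfaction clauses; both reduce to the corresponding facts for $S$ on the finitely many subexpressions of $\sigma$. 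The upshot is that $S^+$ is a $\Phi$-satisfaction relation for $\str S$, where $\Phi$ is the finite set of formulas occurring in $\pi$, and $S^+$ extends $S$.

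Finally I would run the soundness argument exactly as in the proof of Theorem~\ref{bl}: by induction along the finite proof $\pi$, every sequent $\Gamma\sqr\Delta$ in $\pi$ is valid under $S^+$ — for every $\str S$-assignment $A$ for its formulas, $\models^{S^+}\gamma[A]$ for all $\gamma\in\Gamma$ implies $\models^{S^+}\delta[A]$ for some $\delta\in\Delta$ — this being the routine rule-by-rule verification of the axioms and inference rules of $\LK$ against the recursive clauses, with the quantifier rules using the substitution lemma and the eigenvariable condition. Applying this to the end sequent $\sqr\sigma$ gives $\models^{S^+}\sigma$, and since $\sigma\in\Phi$ and $S^+$ extends $S$ we obtain $\models^S\sigma$. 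As $S$ was an arbitrary $\{\sigma\}$-satisfaction relation, $\str S\gbmodels\sigma$. The main work is the construction and verification of $S^+$ — in particular isolating the correct generalized subformula property (covering term substitution) and the substitution lemma for partial satisfaction relations; the soundness induction itself is routine, and the one conceptual point is that cut-elimination, which is not needed for Theorem~\ref{bl}, is exactly what allows us to drop the weak-satisfactoriness hypothesis here.
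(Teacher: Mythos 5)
Your proof is correct, and its core is the same as the paper's: everything turns on the fact that cut-elimination for $\LK$, proved effectively in $\tC$, reduces the problem to a cut-free derivation of $\sqr\sigma$, whose subformula property confines the formulas occurring in the proof to substitution instances of subformulas of $\sigma$ --- exactly the formulas about which a $\{\sigma\}$-satisfaction relation already carries information. Where you diverge is in how the soundness step is organized. The paper introduces the auxiliary notion of a \emph{subvaluation} and of a $J$-interpretation, states soundness of $\LK^-$ with respect to $J$-interpretations (\ref{bt}), and packages everything into the equivalences (\ref{bv}); the given $\{\sigma\}$-satisfaction relation directly yields a $(0\sqr\{\sigma\})$-interpretation, so $\models^S\sigma$ follows by citation. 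You instead stay within the paper's official notion of partial satisfaction relation and manufacture, from $S$ and the term-valuation function $T$ of Theorem~\ref{m}, a $\Phi$-satisfaction relation $S^+$ for the set $\Phi$ of formulas in the cut-free proof; this requires isolating the generalized subformula property (closure under term substitution) and a substitution lemma, after which ordinary soundness as in Theorem~\ref{bl} applies. Both routes work: the paper's buys brevity at the cost of an extra notion (subvaluation), while yours keeps the ontology minimal at the cost of the $S^+$ construction and its well-definedness check. One point you omit that the paper at least flags: the cut-free subformula property as you use it holds for a signature without identity. If $\rho$ has identity, the equality axioms enter as premises (or as rules that break the pure subformula property), so your claim that every formula in $\pi$ is a substitution instance of a subformula of $\sigma$ needs the short additional argument the paper alludes to.
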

The proof is not quite as trivial as the theorem appears to be. To prove it in $\tC$, as opposed to $\tgb$, we use the existence of a deductive system for logic without identity that has the subformula property. For example, let $\LK$ be the \emph{logischer klassischer Kalk\"ul} of Gentzen, as described in \cite[Ch.~1, \S 2]{Takeuti:1975}, and let $\LK^-$ be the same system with the cut rule omitted. The latter has the subformula property, i.e., all formulas appearing in a proof of a sequent are (instances of) subformulas of formulas appearing in the final sequent. By the \emph{cut-elimination theorem} (Gentzen's \emph{Hauptsatz}), any sequent derivable in $\LK$ is derivable in $\LK^-$.

To place this result in the proper perspective from the standpoint of $\tC$, we digress briefly. Consider a fixed language in a signature without identity. The completeness theorem for the sequent calculus is may be taken to be the following assertion:
\m{bq}
\begin{display}
\label{bq}
If a sequent $J = (\Gamma \sqr \Delta)$ is not derivable then there is a an interpretation that does not satisfy $J$, i.e., a structure $\str S$ and an $\str S$-assignment $A$ for $J$ such that $\str S \nmodels J[A]$, i.e., $\str S \models \opbigand \Gamma[A]$ and $\str S \nmodels \opbigor \Delta[A]$.
\end{display}
Here an \emph{interpretation} is ordinarily understood to be a structure $\str S$ and an $\str S$-assignment of all variables.

Working in $\tC$, however, we must be more specific as to the meaning of `interpretation'. Specifically, we define \emph{subvaluation} as in \cite[\S2.5]{RVWwtbi1:2012}. (Briefly, this weakens the notion of partial valuation so that---for example---we may assign the value \emph{true} to $\phi \opor \psi$ without having assigned a value to both $\phi$ and $\psi$; it is enough to have assigned the value \emph{true} to one of these, leaving the other unassigned; whereas in order to assign the value \emph{false} to $\phi \opor \psi$ we must have assigned \emph{false} to both $\phi$ and $\psi$.) We define a \emph{$J$-interpretation} to be a structure $\str S$, an $\str S$-assignment of the free variables of $J$, and an $\str S$-subvaluation that assigns a value to each formula in $J$ at the given assignment. The standard proof of \eqref{bq} (e.g., the proof of Lemma~8.3 in \cite{Takeuti:1975}) yields the following:
\m{br}
\begin{display}
\label{br}
If $J$ is not $\LK^-$-derivable then there is a $J$-interpretation that does not satisfy $J$.
\end{display}
A straightforward modification of this method yields the following:
\m{bs}
\begin{display}
\label{bs}
If $J$ is not $\LK$-derivable then there is a full interpretation that does not satisfy $J$.
\end{display}
This is equivalent to the completeness theorem (\ref{z}) stated above.

The corresponding soundness theorems for $\LK^-$ and $\LK$ are respectively:
\m{bt}
\begin{display}
\label{bt}
If $J$ is $\LK^-$-derivable then every $J$-interpretation satisfies $J$.
\end{display}
\m{bu}
\begin{display}
\label{bu}
If $J$ is $\LK$-derivable then every full interpretation satisfies $J$.
\end{display}
Given \axiom{Infinity}, the distinction between $J$- and full interpretations is irrelevant, as we may restrict our attention to structures that are sets, so that any $J$-interpretation is uniquely extendible to a full interpretation. This yields the standard model-theoretic proof of the cut-elimination theorem: if $J$ is $\LK$-derivable then every full interpretation satisfies $J$, so every $J$-interpretation satisfies $J$, so $J$ is $\LK^-$-derivable. In $\tC$, of course, this proof of cut-elimination is not available, but there are effective proofs (e.g., the proof of Theorem~5.1 in \cite{Takeuti:1975}) that may be rendered in $\tC$, and this yields the following as a theorem of $\tC$:
\m{bv}
\begin{display}
\label{bv}
A sequent $J$ is $\LK^-$-derivable iff $J$ is $\LK$-derivable iff every full interpretation satisfies $J$ iff every $J$-interpretation satisfies $J$.
\end{display}
The following finitary proof of Theorem~\ref{bm}, depending as it does on \eqref{bv}, is therefore another example of the value of the effective proof of cut-elimination.
\begin{proof}[Proof of Theorem~\ref{bm}] 
Suppose $\rho$ is a signature without identity, $\sigma$ is a $\rho$-sentence, $\sigma$ (i.e., the sequent $0 \sqr \{\sigma\}$) is $\LK$-derivable, and $S$ is a $\{\sigma\}$-satisfaction relation for $\str S$. By \eqref{bv}, $\models^S \sigma$. If $\rho$ is a signature with identity, a short additional argument is necessary.
\end{proof}

We will be particularly concerned with satisfaction relations for $(V;\in)$, where $V$ is the class of all sets. Any mention of $V$ as a structure in this article refers to $(V;\in)$ or to an essentially equivalent structure with additional defined predicates or operations.

\m{c}
\begin{theorem}
\label{c}
{\normalfont [$\tS$]}
Suppose $n$ is a finite ordinal, and $\theta \in V_n$ is an $\splain$-sentence. Then $\tC\proves$ \bmob{}The $\Phi^{\splain}_{\obsub{\hat n}}$-satisfaction relation for $V$ exists. Let $S$ be this relation. Then $ \obsub\theta \ciff \models^S \bmob \obsub{\theta}\emob \ciff \models^S  \obsub{\hat\theta} $.\emob.
\end{theorem}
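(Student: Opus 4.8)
The plan is to split the asserted chain $\theta \ciff \models^S\bob\theta\eob \ciff \models^S\hat\theta$ into its two links and to argue throughout in $\tS$, producing one $\tC$-proof for each concrete $\splain$-sentence $\theta\in V_n$; the argument runs parallel to that of Theorem~\ref{o}. First, the existence of $S$. Since $\splain\in\HF$, Theorem~\ref{o} with $\rho:=\splain$ gives that $\tC$ proves \bob for every $\splain$-structure there exists a $\Phi^\splain_{\hat n}$-satisfaction relation\eob, and since $(V;\in)$ is (provably in $\tC$) an $\splain$-structure, $\tC$ proves that a $\Phi^\splain_{\hat n}$-satisfaction relation for $V$ exists. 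Uniqueness --- which justifies the word ``the'' and the name $S$ --- follows from the fact, provable in $\tC$, that partial satisfaction relations for a fixed structure agree on their common domain: any two $\Phi^\splain_{\hat n}$-satisfaction relations for $V$ have the common domain consisting of all $\la\epsilon,A\ra$ with $\epsilon\in\Phi^\splain_{\hat n}$ and $A$ a $V$-assignment for $\epsilon$, so they coincide. This domain is a proper class, which is unproblematic in $\tC$.

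The right-hand link $\models^S\bob\theta\eob\ciff\models^S\hat\theta$ is essentially trivial. Both $\bob\theta\eob$ (unwound via the bold syntactic operation symbols) and $\hat\theta$ (built from $\bob0\eob$ and $\bob\add\eob$) are closed terms of the language of the $\tC$-proof whose value $\tC$ computes, by unwinding the finitely many operations involved, to be the set $\theta$. Since $\theta\in V_n$ is an $\splain$-formula we have $\theta\in\Phi^\splain_{\hat n}$, and the empty function is the $V$-assignment for the sentence $\theta$, so $\la\theta,\emptyset\ra\in\Dom S$; hence both $\models^S\bob\theta\eob$ and $\models^S\hat\theta$ unwind to the single assertion $\la\theta,\emptyset\ra\in S$, and in particular $\tC\proves\bob\theta\eob\opeq\hat\theta$.

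The left-hand link $\theta\ciff\models^S\bob\theta\eob$ carries the content and is proved by \emph{meta-induction} on formula complexity. Because the expression-building operations are rank-increasing, every subformula of a rank-$<n$ formula again has rank $<n$, so $\subclose{\Phi^\splain_{\hat n}}=\Phi^\splain_{\hat n}$ is recursion-closed and $S$, being a $\Phi^\splain_{\hat n}$-valuation, obeys the usual recursive valuation clauses on all of its domain. To push the induction through quantifiers I prove the generalization: for each $\splain$-formula $\phi$ of rank $<n$, $\tC$ proves \bob for every $V$-assignment $A$ for $\phi$, $\la\phi,A\ra\in S$ iff $\phi[A]$\eob, where $\phi[A]$ is $\phi$ read as a $\cplain$-assertion with its free variables interpreted according to $A$ --- no relativization is needed, since $|V|$ is the whole universe. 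For atomic $\phi$ (of the form $v_i\opin v_j$ or $v_i\opeq v_j$) this is just the atomic clause for $S$. For $\phi$ of the form $\opneg\psi$, $\psi\opand\psi'$, $\psi\opor\psi'$, $\psi\opimplies\psi'$, or $\psi\opiff\psi'$, combine the recursive clause of $S$ with the induction hypothesis(es) for the immediate subformula(s). For $\phi$ of the form $\opexists v_k\,\psi$ (resp.\ $\opforall v_k\,\psi$), the recursive clause gives $\la\phi,A\ra\in S$ iff there is (resp.\ for every) $a\in|V|$ with $\la\psi,A\cup\{(v_k,a)\}\ra\in S$ --- legitimate since $\Free\psi\subseteq\Free\phi\cup\{v_k\}$ makes each $A\cup\{(v_k,a)\}$ an assignment for $\psi$ --- which by the induction hypothesis for $\psi$ is exactly $\phi[A]$. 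In $\tS$ these finitely many steps are spliced into a single $\tC$-proof, using that $\tC$ proves the recursive satisfaction clauses (Theorem~\ref{n}), exactly as in the proof of Theorem~\ref{o}. Taking $\phi:=\theta$ and $A:=\emptyset$ yields $\theta\ciff\models^S\bob\theta\eob$, and chaining with the right-hand link completes the proof.

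I expect no genuine obstacle here; the effort lies entirely in the use/mention bookkeeping and in noticing that the quantifier case forces the free-variable generalization of the induction hypothesis. It is essential that $n$ is fixed: by Tarski's undefinability theorem $\tC$ cannot prove that $V$ is even weakly satisfactory, so the result must be a metatheorem delivering a separate $\tC$-proof for each $\theta$, with the rank bound $n$ keyed to the rank of $\theta$.
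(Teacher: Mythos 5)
Your proposal is correct and follows essentially the same route as the paper, whose entire proof is ``By induction on $n$, using Theorem~\ref{o}'': you invoke Theorem~\ref{o} for existence (and agreement of partial valuations for uniqueness) and then carry out the induction---there phrased as induction on $n$, here as the equivalent meta-induction on formula complexity---with the standard and necessary strengthening of the induction hypothesis to open formulas under assignments. The additional observations (that $\bob\theta\eob$ and $\hat\theta$ are provably equal closed terms, and that the rank bound keeps the result a metatheorem consistent with Tarski undefinability) are exactly the bookkeeping the paper leaves implicit.
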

\begin{proof}
By induction on $n$, using Theorem~\ref{o}.
\end{proof}

\m{x}
\begin{theorem}
\label{x}
{\normalfont [$\tC$]}
$(V;\in) \gbmodels \tS$.
\end{theorem}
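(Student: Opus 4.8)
The plan is to verify the defining condition of $\gbmodels$ directly: for every $\theta \in \tS$ and every $\{\theta\}$-satisfaction relation $S$ for $(V;\in)$, show $\models^S \theta$. Two facts are used throughout. First, $\tC \proves \theta$ for every axiom $\theta$ of $\tS$; this is the ``easy half'' of the conservativity of $\tgb$ over $\tzf$ (with \axiom{Infinity} deleted), since every $\splain$-axiom of $\tS$ is among, or immediately derivable from, the set axioms of $\tC$. Second, by the remark preceding Theorem~\ref{m}, any two partial satisfaction relations for $(V;\in)$ agree on their common domain; so it suffices, for each $\theta \in \tS$, to exhibit one $\{\theta\}$-satisfaction relation $S$ with $\models^S \theta$, since every other $\{\theta\}$-satisfaction relation then agrees with it on $\la \theta, \varnothing \ra$.

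For the finitely many non-schematic axioms of $\tS$ --- \axiom{Extensionality}, \axiom{Pairing}, \axiom{Union}, \axiom{Power Set}, \axiom{Foundation} (and \axiom{Empty Set} if it is taken as an axiom) --- and more generally for any single fixed $\splain$-sentence $\theta$, I would unwind $\models^S \theta$ from the outside in. Since $S$ is a satisfaction relation, its underlying valuation function obeys the Tarski clauses on $\subclose{\{\theta\}}$: atomic $\opin$- and $\opeq$-subformulas are evaluated by genuine membership and identity in $V$, the connectives $\opneg,\opand,\opor,\opimplies,\opiff$ pass through the corresponding metalogical ones, and $\opforall v$, $\opexists v$ become honest quantification over $|{(V;\in)}| = V$. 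Hence $\models^S \theta$ is equivalent, provably in $\tC$, to $\theta$ itself (relativized to $V$, which for a pure set-theoretic sentence changes nothing), and $\theta$ is a theorem of $\tC$. Equivalently, one may quote Theorem~\ref{c}: for the rank $n$ of $\theta$ it yields a $\Phi^{\splain}_{\hat n}$-satisfaction relation $S$ for $V$ with $\theta \ciff \models^S \theta$, and $\theta$ holds.

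The schemas \axiom{Separation} and \axiom{Replacement} require a uniform form of the same unwinding. An instance $\theta_\psi$ of \axiom{Separation} has the fixed outer shape $\opforall \bar w\, \opforall x\, \opexists y\, \opforall z\,\big((z \opin y) \opiff ((z \opin x) \opand \psi)\big)$, with only $\psi$ varying; unwinding this bounded-depth prefix --- while treating $\psi$ as an atom with respect to $S$, i.e. keeping the single clause $\models^{S_0} \psi[\dots]$ intact, where $S_0$ is the restriction of $S$ to $\subclose{\{\psi\}}$ (itself a $\{\psi\}$-satisfaction relation for $V$) --- reduces $\models^S \theta_\psi$ to the assertion that for all parameters there is a \emph{set} $y$ with $\forall c\,(c \in y \ciff (c \in x \cand \models^{S_0} \psi[\dots]))$. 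Such a $y$ exists by the \axiom{Separation} axiom of $\tC$, applied with the class $S_0$ as a \emph{free} parameter. \axiom{Replacement} is analogous, the image set being furnished by the \axiom{Replacement} axiom of $\tC$, again with $S_0$ free. This disposes of every instance uniformly, so the whole argument is a single $\tC$-proof of the single sentence $(V;\in) \gbmodels \tS$.

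The main obstacle is not depth but uniformity and predicativity: one must (a) confirm that the outer unwinding of the schema instances can be carried out to a bounded depth uniformly in $\psi$, with $\psi$ encapsulated as a single ``oracle'' clause relative to $S$, so that no induction on formula complexity --- which would fail for the proper-class structure $(V;\in)$ --- is needed; and (b) check that the comprehension actually invoked is the predicative \axiom{Separation}/\axiom{Replacement} of $\tgb$, using $S_0$ only as a free class parameter and never quantifying over classes, which is precisely what keeps the proof inside $\tC$. A minor point is the customary extra care for the identity predicate, which here is immediate because $\opeq$ is interpreted by genuine equality in $(V;\in)$.
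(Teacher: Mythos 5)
Your proof is correct and takes essentially the same route as the paper's: the individual axioms are handled by unwinding $\models^S \theta$ through the Tarski clauses, and each schema instance is handled by treating the varying subformula $\psi$ as an oracle relative to the given $S$ and invoking a single predicative instance of the class \axiom{Comprehension} of $\tC$ with $S$ as a free class parameter (plus the \axiom{Separation} axiom that a class intersected with a set is a set) to produce the witness, exactly as the paper does for its \axiom{Comprehension} and \axiom{Collection} schemas. One caution: your opening remark that it ``suffices to exhibit one $\{\theta\}$-satisfaction relation'' must not be relied on for the schema instances, since establishing the existence of such a relation for every instance is precisely weak satisfactoriness of $(V;\in)$, which $\tC$ cannot prove (Corollary~\ref{y}); your actual schema argument avoids this by quantifying over an arbitrary given $S$, so nothing breaks.
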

\begin{proof}
For any individual axiom $\theta \in \tS$, we can prove \bob$(V;\in) \gbmodels \bob\obsub\theta\eob$\eob{} directly. In fact, since $\tC \proves \theta$, \eqref{c} informs us that $\tC \proves$ \bob$(V;\in) \gbmodels \bob\obsub\theta\eob$\eob. Note that the direct proof of \bob$(V;\in) \gbmodels \bob\obsub\theta\eob$\eob{} uses $\theta$ as a premise. Thus, for example, to prove that $(V;\in) \gbmodels  \axiom{Pair}$ we use the fact that pairs exist in $V$, arguing as follows:

Suppose $S$ is a $\{ \theta \}$-satisfaction relation for $V$, where
\[
\theta = \opforall u, u'\, \opexists v\, \opforall w\, ( w \opin v \opiff w \opeq u \opor w \opeq u').
\]
To show that $\models^S \theta$ we must show that
\[
\forall x,x' \in V\, \exists y \in V\, \forall z \in V\, ( \models^S \bob\obass z \in \obass y\eob \ciff \models^S \bob \obass z = \obass x\eob \cor \models^S \bob \obass z = \obass{x'}\eob).
\]
To this end, suppose $x, x' \in V$. Let $y = \{ x, x'\}$. Then $y$ is as desired.

If $\Theta \subseteq \tS$ is an axiom schema, we cannot rely in this way on the fact that $\tC \proves \Theta$ to show that $(V;\in) \gbmodels \Theta$, as our proof must be finite. As it happens, in these cases, it suffices to invoke a single corresponding axiom of $\tC$.

Suppose, for example, that
\[
\theta = \bmob\forall y\, \forall x\, \exists x'\,\forall z\, ( z \in x' \ciff z \in x \cand \obsub\psi(z,y))\emob
\]
is an instance of the \axiom{Comprehension} schema of $\tS$, where $\psi$ is an $\splain$-formula with two free variables. We must show that for every $\{\theta\}$-satisfaction relation $S$ for $V$, $\models^S \theta$. Suppose, therefore, that $S$ is a $\{\theta\}$-satisfaction relation for $V$. We must show that
\[
\forall y\, \forall x\, \exists x'\,\forall z\,\, \models^S \bob z \in x' \ciff z \in x \cand \obsub\psi(z,y)\eob.
\]
Given $y$ and $x$, let $x' = \{ z \in x \mid \models^S \psi[z,y] \}$. Note that the existence of $x'$ as a class follows from a single instance of the \axiom{Comprehension} schema of $\tC$ (with parameters $x, S, \psi, z, y$). That $x'$ is a set follows from the \axiom{Separation} axiom that states that the intersection of a class with a set is a set.

The \axiom{Collection} schema is handled similarly.
\end{proof}

Note that Theorem~\ref{bm} does not permit us to drop the condition of weak satisfactoriness in Theorem~\ref{bl}. In particular, despite Theorem~\ref{x}, we have the following.
\m{bn}
\begin{theorem}
\label{bn}
{\normalfont [$\tS$]}
If $\tS$ is consistent then $\tC \nproves$ \bmob{}for every theorem $\sigma$ of $\tS$, $(V; \in) \gbmodels \sigma$\emob.
\end{theorem}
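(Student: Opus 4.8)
The plan is to argue by contradiction, reasoning throughout in $\tS$. Write $\mu$ for the $\lang L^{\cplain}$-sentence \bob{}for every theorem $\sigma$ of $\tS$, $(V;\in)\gbmodels\sigma$\eob{} whose non-provability is at issue. Assume $\Con\tS$, and suppose toward a contradiction that $\tC\proves\mu$. The crux is that $\mu$ --- despite looking weak, being a universal statement about partial satisfaction relations that need not exist --- is already enough for $\tC$ to prove $\Con\tS$; combining this with the conservativity of $\tC$ over $\tS$ and the fact that $\tS$ proves its own second incompleteness theorem will give $\opneg\Con\tS$.

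For the core step I would fix two concrete low-rank $\splain$-sentences: let $\theta_0$ be \bob$\emptyset = \emptyset$\eob{} and let $\theta_1$ be its negation $\opneg\theta_0$, and choose a finite ordinal $n$ with $\theta_0,\theta_1 \in V_n$. By Theorem~\ref{c}, instantiated at this fixed $n$ and at $\theta_0$, $\tC$ proves that the $\Phi^{\splain}_n$-satisfaction relation $S$ for $(V;\in)$ exists and that $\models^S\theta_0$ (the latter because $\tC\proves\emptyset = \emptyset$). Since $S$ obeys the recursion clause for negation, $\tC$ also proves $\nmodels^S\theta_1$; restricting $S$ to the subexpressions of $\theta_1$ (all of which lie in $\Phi^{\splain}_n$, the operations being rank-increasing) yields a $\{\theta_1\}$-satisfaction relation for $(V;\in)$ that falsifies $\theta_1$, so $\tC\proves\opneg\big((V;\in)\gbmodels\theta_1\big)$. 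On the other hand, by the definition of $\gbmodels$ for theories, $\mu$ logically implies \bob{}if $\tS\proves\theta_1$ then $(V;\in)\gbmodels\theta_1$\eob. Since $\tC$-provability is (verifiably in $\tS$) closed under modus ponens, $\tC + \mu \proves \tS\nproves\theta_1$; and because $\tS\proves\theta_0$ holds outright, $\tC$ proves (indeed $\tS$ proves that $\tC$ proves) that $\tS\nproves\theta_1$ is just a reformulation of $\Con\tS$. Thus $\tC\proves\mu$ entails $\tC\proves\Con\tS$, and $\tS$ verifies this entailment.

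To conclude: $\Con\tS$ is (expressible as) an $\splain$-sentence, so from $\tC\proves\Con\tS$ and the inclusion $\complet{\tC}\cap\lang L^{\splain} \subseteq \complet{\tS}$ --- the half of \eqref{l} that has a finitary, hence $\tS$-formalizable, proof --- we get $\tS\proves\Con\tS$. But $\tS$ proves the formalized version of its own second incompleteness theorem, $\Con\tS \to \tS\nproves\Con\tS$; together with the standing assumption $\Con\tS$ this gives $\tS\nproves\Con\tS$, contradicting $\tS\proves\Con\tS$. Hence, under $\Con\tS$, $\tC\nproves\mu$. Each ingredient --- the particular instance of Theorem~\ref{c}, closure of the $\tC$-provability predicate under modus ponens, the conservativity inclusion from the $\tS$-proof of \eqref{l}, and the formalized second incompleteness theorem for $\tS$ --- is itself a theorem of $\tS$, so the whole implication $\Con\tS \to \tC\nproves\mu$ is provable in $\tS$, which is the assertion of the theorem.

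The work here is in the bookkeeping rather than in any single hard idea; the one point demanding care is that $\mu$ genuinely forces $\tC$ to settle the particular sentence $\theta_1$ --- that is, that $(V;\in)\gbmodels\theta_1$ is \emph{not} vacuously provable in $\tC$. This is exactly where the availability, for each \emph{concrete} $n$, of the $\Phi^{\splain}_n$-satisfaction relation for $(V;\in)$ (Theorem~\ref{c}) is indispensable; it is the residue of the construction that, as noted after Theorem~\ref{m}, breaks down for an arbitrary formula when $V$ is a proper class. Finally, the argument accounts for the remark preceding the theorem: were weak satisfactoriness droppable from Theorem~\ref{bl}, then Theorem~\ref{x} (which gives $(V;\in)\gbmodels\tS$) would yield $\tC\proves\mu$, hence $\tC\proves\Con\tS$ --- impossible when $\tS$ is consistent.
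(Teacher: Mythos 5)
Your proposal is correct and is essentially the paper's own argument: derive $\Con\tS$ inside $\tC$ from the hypothesized theorem by applying it to a concretely refutable sentence whose $\{\cdot\}$-satisfaction relation for $(V;\in)$ provably exists (the paper uses $\opexists u\,\, u \opneq u$ rather than $\opneg(\emptyset \opeq \emptyset)$, which also sidesteps the minor point that $\emptyset$ is an $\spr$- rather than $\splain$-term), then invoke the conservativity of $\tC$ over $\tS$ and G\"odel's second incompleteness theorem. Your explicit attention to why $(V;\in)\gbmodels\theta_1$ is not vacuously provable is a point the paper's proof leaves implicit, and is handled correctly.
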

\begin{proof}
We will prove the contrapositive. Suppose $\tC \proves$ \bmob{}for every theorem $\sigma$ of $\tS$, $(V; \in) \gbmodels \sigma$\emob. Then the following is a proof of $\Con \tS$ in $\tC$.

\bmob{}Suppose toward a contradiction that $\tS$ is inconsistent. Then $\tS \proves\opexists u\,\, u \opneq u$.  By hypothesis, therefore, $(V; \in) \gbmodels \opexists u\,\, u \opneq u$. Let $S$ be a $(\opexists u\,\, u \opneq u)$-satisfaction relation for $(V; \in)$. Then $\models^S \opexists u\,\, u \opneq u$, which is clearly not the case. Hence,  $\tS$ is consistent.\emob{}

Thus, $\tC \proves$  \bmob$\tS$ is consistent\emob. Since $\tC$ is a conservative extension of $\tS$, $\tS \proves$ \bmob$\tS$ is consistent\emob. By G\"odel's second incompleteness theorem, $\tS$ is therefore inconsistent.
\end{proof}

\m{y}
\begin{corollary}
\label{y}
{\normalfont [$\tS$]}
If $\tS$ is consistent then $\tC \nproves$ \bmob$(V; \in)$ is weakly satisfactory\emob.
\end{corollary}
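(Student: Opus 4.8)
The plan is to prove the contrapositive: assuming $\tC \proves$ \bob$(V;\in)$ is weakly satisfactory\eob, derive that $\tS$ is inconsistent. All of this reasoning takes place in $\tS$, which is the metatheory in which the corollary is stated, and which can verify elementary facts about what $\tC$ proves.

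First I would assemble three facts, each of which is a theorem of $\tC$. The first is the assumption itself, $\tC \proves$ \bob$(V;\in)$ is weakly satisfactory\eob. The second is Theorem~\ref{x}, which gives $\tC \proves$ \bob$(V;\in) \gbmodels \tS$\eob; here $\tS$ is taken as the class of (codes of) its axioms, a definable subclass of $\HF$, so this is a genuine $\cplain$-sentence. The third is Theorem~\ref{bl}, instantiated with $\str S = (V;\in)$ and $\Theta = \tS$: it is a $\tC$-theorem that if $(V;\in)$ is weakly satisfactory and $(V;\in)\gbmodels\tS$, then for every $\splain$-sentence $\sigma$, if $\tS \proves \sigma$ then $(V;\in)\gbmodels\sigma$.

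Chaining these three $\tC$-theorems inside $\tC$ — a purely formal deduction whose existence $\tS$ can confirm — yields $\tC \proves$ \bmob{}for every theorem $\sigma$ of $\tS$, $(V;\in)\gbmodels\sigma$\emob. But this is exactly the statement whose $\tC$-provability Theorem~\ref{bn} shows to be incompatible with $\Con\tS$; equivalently, by the contrapositive of Theorem~\ref{bn}, the $\tC$-provability of this statement forces $\tS$ to be inconsistent. This completes the contrapositive, hence the corollary.

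The only step requiring any care — and the closest thing to an obstacle — is confirming that Theorem~\ref{x} and Theorem~\ref{bl} are available as single $\tC$-theorems and not merely as schemes: the proof of Theorem~\ref{x} treats the finitely many axiom schemes of $\tS$ (\axiom{Comprehension}, \axiom{Collection}) by invoking the corresponding single axioms of $\tC$, so \bob$(V;\in)\gbmodels\tS$\eob{} is a theorem of $\tC$ in the literal sense, while Theorem~\ref{bl} is already stated as a bona fide $\tC$-theorem with $\Theta$ and $\sigma$ universally quantified. Once that is granted, the combination is formal and no argument beyond Theorems~\ref{x}, \ref{bl}, and \ref{bn} is needed.
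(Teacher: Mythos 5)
Your proof is correct and follows exactly the route the paper intends: the paper's proof reads ``Immediate from (\ref{bn}) with (\ref{bl})'', and your write-up simply fills in the details, including the (implicitly required) appeal to Theorem~\ref{x}, which the paper signals just before stating Theorem~\ref{bn}. Your closing remark about \bob$(V;\in)\gbmodels\tS$\eob{} being a single $\tC$-theorem rather than a scheme is exactly the right point of care.
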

\begin{proof}
Immediate from \eqref{bn} with \eqref{bl}.
\end{proof}

\m{bw}
\begin{theorem}
\label{bw}
{\normalfont [$\tS$]}
If $\tS$ is consistent then $\tS$ is not finitely axiomatizable.
\end{theorem}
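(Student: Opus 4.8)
The plan is to argue in $\tS$: assume $\Con\tS$, suppose toward a contradiction that $\tS$ is finitely axiomatizable, and derive $\tS\proves\Con\tS$, which is impossible by G\"odel's second incompleteness theorem (used exactly as in the proof of Theorem~\ref{bn}). Fix the standard recursive axiomatization $\mathcal A$ of $\tS$---finitely many individual axioms together with the \axiom{Comprehension} and \axiom{Collection} schemas---and for $n\in\omega$ let $\tS_n$ be the finite subtheory $\mathcal A\cap V_n$ of axioms of rank $<n$. Since every axiom has a finite rank and every derivation uses only finitely many axioms, $\complet\tS=\bigcup_n\complet{\tS_n}$; $\tS$ proves this.

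The crux is a uniform reflexivity lemma, to be proved in $\tS$: there is a primitive recursive $F$ such that $\tS$ proves that, for every $n$, $\tS_{F(n)}$ proves $\Con\tS_n$ (here $\tS_{F(n)}$ proves $\Con\tS_n$ abbreviates the corresponding arithmetized provability statement). For a given $n$ one reasons inside $\tS_{F(n)}$ using the definable partial satisfaction predicate $\mathrm{Sat}_n$ for $\splain$-formulas of rank $<n$ evaluated in $(V;\in)$---the $\tS$-level counterpart of the $\Phi^\splain_n$-satisfaction relations of Theorems~\ref{o} and~\ref{c}. Taking $F(n)$ large enough that the needed bounded recursion is available from its \axiom{Comprehension} instances, $\tS_{F(n)}$ proves that $\mathrm{Sat}_n$ satisfies the Tarski clauses for rank-$<n$ formulas, and that every axiom of $\tS_n$ is $\mathrm{Sat}_n$-true---the individual axioms and rank-$<n$ instances of \axiom{Collection} directly, and rank-$<n$ instances of \axiom{Comprehension} by exactly the device in the proof of Theorem~\ref{x}: the witness $x'=\{z\in x\mid\mathrm{Sat}_n(\psi,[z,y])\}$ is produced, uniformly in the parameter formula $\psi$, by a single instance of \axiom{Comprehension} applied to the formula $\mathrm{Sat}_n$. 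Finally $\tS_{F(n)}$ refutes $\tS_n\proves\opexists u\,u\opneq u$: by the cut-elimination theorem, whose effective proof (recalled before Theorem~\ref{bm}) goes through in a fixed finite fragment of $\tS$, such a derivation could be taken cut-free, hence---by the subformula property of $\LK^-$---to contain only subformulas of axioms of $\tS_n$, all of rank $<n$ and so $\mathrm{Sat}_n$-evaluable; an induction on its length, as in the proof of Theorem~\ref{bl}, then shows each of its sequents is $\mathrm{Sat}_n$-satisfied in $(V;\in)$, which is absurd for the empty endsequent. Everything here is uniform in $n$. (Note that $\tS$ does \emph{not} prove $\forall n\,\Con\tS_n$---that would already give $\tS\proves\Con\tS$---since $\Con\tS_n$ is established only inside the larger fragment $\tS_{F(n)}$; no satisfaction relation for the proper class $(V;\in)$ is internalized, in keeping with Corollary~\ref{y}.)

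Granting the lemma, suppose $\complet{\{\alpha\}}=\complet\tS$, where $\alpha$ is the conjunction of a finite axiomatization. Then $\alpha\in\complet\tS=\bigcup_n\complet{\tS_n}$, so $\tS_m\proves\alpha$ for some $m$; hence $\complet{\tS_m}\supseteq\complet{\{\alpha\}}=\complet\tS\supseteq\mathcal A\supseteq\tS_{F(m)}$, so $\tS_m$ proves every axiom of $\tS_{F(m)}$ and therefore everything $\tS_{F(m)}$ proves---in particular $\Con\tS_m$, by the lemma. Since also $\tS_m\subseteq\mathcal A$, we have $\complet{\tS_m}=\complet\tS$, so $\tS$ proves $\complet{\tS_m}=\complet\tS$ and hence $\Con\tS_m\ciff\Con\tS$; combined with $\tS\proves\Con\tS_m$ this gives $\tS\proves\Con\tS$. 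But by the formalized second incompleteness theorem $\Con\tS$ implies $\tS\nproves\Con\tS$, and $\Con\tS$ was assumed, so $\tS\nproves\Con\tS$---contradicting $\tS\proves\Con\tS$. Therefore $\tS$ is not finitely axiomatizable.

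The main obstacle is the uniform reflexivity lemma of the second paragraph: the partial truth definition, the single-instance treatment of the \axiom{Comprehension} schema, the subformula analysis of cut-free derivations, and the formalized soundness induction must all be carried out uniformly in $n$ within $\tS$, and the offset $F$ arranged so that each $\tS_{F(n)}$---but crucially not $\tS$ itself---is credited with proving $\Con\tS_n$. Once that lemma is secured, the remainder is routine bookkeeping with deductive closures together with a single appeal to the formalized second incompleteness theorem.
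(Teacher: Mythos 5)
Your proof is essentially correct in outline but takes a genuinely different route from the paper. The paper follows Ryll-Nardzewski: working in $\tC$ with $\Con\tS$, it reduces to $\tF = \tS + \opneg\axiom{Infinity}$, uses the completeness theorem to obtain a satisfactory model of $\tF$ with a nonstandard ordinal $N$, builds a definable reflection sequence $f$ for the subformulas of the putative single axiom $\theta$, and derives a contradiction because the proper initial segment $\bigcup_{n\in\omega}V_{f(n)}^{\str S}$ is a $\{\theta\}$-elementary substructure that must compute the same $f$, which is cofinal in it. That argument is model-theoretic and leans on the paper's partial-satisfaction machinery. Yours is the classical proof-theoretic alternative via essential reflexivity plus G\"odel's second theorem. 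What your route buys is independence from the paper's satisfaction apparatus for class structures; what it costs is that nearly all of the content is concentrated in the uniform reflexivity lemma, which you only sketch. That lemma is standard but genuinely laborious: $\mathrm{Sat}_n$ cannot be a single formula uniform in $n$ (Tarski), so the Tarski clauses, the soundness induction on cut-free derivations, and the verification that the whole construction outputs a valid $\tS_{F(n)}$-proof must be established by showing in $\tS$ that a primitive recursive proof-builder always succeeds. You flag this honestly, and the ingredients you name (partial truth definitions, a single \axiom{Comprehension} instance per schema instance, cut elimination to confine a refutation to subformulas of the axioms) are the right ones.

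There is one genuine misstep in your final bookkeeping: from the outer-level fact $\complet{\tS_m}=\complet{\tS}$ you infer ``$\tS$ proves $\complet{\tS_m}=\complet{\tS}$'' and hence $\tS\proves(\Con\tS_m\ciff\Con\tS)$. The inclusion $\complet{\tS}\subseteq\complet{\tS_m}$ is a $\Pi_2$ assertion about the arithmetized provability predicates; it is available as a hypothesis of your outer argument, but a true (or assumed) $\Pi_2$ statement is not thereby provable in the object theory, so $\tS\proves(\Con\tS_m\cimplies\Con\tS)$ does not follow. The repair is immediate and uses only what you already have: you have derived $\tS_m\proves\Con\tS_m$, and $\tS_m$ is consistent (being contained in $\complet{\tS}$ with $\Con\tS$ assumed) and sufficiently strong (it proves $\alpha$, hence all of $\tS$); so apply the formalized second incompleteness theorem to the finitely axiomatized theory $\tS_m$ itself, rather than to $\tS$, and the contradiction is reached without the unprovable equivalence.
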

Our proof is closely modeled on that of Ryll-Nardzewski\cite{Ryll:1952}, but we work in $\tC$, rather than in $\tzf$. Since $\tC$ is a conservative extension of $\tS$, there is a proof in $\tS$. Since we cannot prove $\Con \tS$ (assuming $\tS$ is consistent) we must specifically assume it. Note that this implies that the theory $\tF = \tS + \opneg\axiom{Infinity}$ is consistent. (Every satisfactory model of $\tS$ has a substructure, viz., its $V_\omega$, that is a satisfactory model of $\tF$.) We will show that $\tF$ is not finitely axiomatizable, from which the theorem follows.
\begin{proof}
Suppose toward a contradiction that $\theta$ is a theorem of $\tF$ such that $\{\theta\} \proves \tF$. Given an existential $\spr$-formula $\psi = \opexists v\,\, \phi(v, v_0, \dots, v_{m-1})$, let $R_\psi(n,n')$ be the formula
\begin{multline*}
\Ord n \opand \Ord n' \opand \opforall v_0, \dots, v_{m-1} \in V_n\, \big(\opexists v\,\, \phi(v, v_0, \dots, v_{m-1})\\
\opimplies \opexists v \in V_{n'}\,\, \phi(v, v_0, \dots, v_{m-1})\big),
\end{multline*}
and given a universal $\spr$-formula $\psi = \opforall v\,\, \phi(v, v_0, \dots, v_{m-1})$, let $R_\psi(n,n')$ be the formula
\begin{multline*}
\Ord n \opand \Ord n' \opand \opforall v_0, \dots, v_{m-1} \in V_n\, \big(\opforall v \in V_{n'}\,\, \phi(v, v_0, \dots, v_{m-1}\\
\opimplies \opforall v\,\, \phi(v, v_0, \dots, v_{m-1})\big).
\end{multline*}
Clearly, for each such formula $\psi$,
\[
\tF \proves \opforall_{\Ord} n\, \opexists_{\Ord} n' > n\,\, R_\psi(n,n').
\]
Let $R$ be the conjunction of the formulas $R_\psi$ for all existential and universal subformulas of $\theta$. Then
\[
\tF \proves \opforall_{\Ord} n\, \opexists_{\Ord} n' > n\,\, R(n,n').
\]
Let $R'(n,k,f)$ be the $\spr$-formula \bob$n$ and $k$ are ordinals, $k > 1$, and $f$ is a function with domain $k$ such that $f(0) = n$ and for all $0 < l < k$, $f(l)$ is the least ordinal $k> f(l-1)$ such that $R(f(l-1), k)$\eob. Let $R''(n,k) = \opexists!f\,\, R'(n,k,f)$. As we have just seen,
\[
\tF \proves \opforall_{\Ord} n\,\, R''(n,2).
\]
It is also clear that
\[
\tF \proves \opforall_{\Ord} n\, \opforall_{\Ord} k > 1\, (R''(n,k) \opimplies R''(n, k+1)).
\]
Hence, by the appropriate instance of the induction schema (\axiom{Foundation}) of $\tS$,
\[
\tF \proves \opforall_{\Ord} n\, \opforall_{\Ord} k > 1\,\, R''(n,k).
\]

Now let $\sprpr$ be the expansion of the signature $\spr$ by the addition of a single constant symbol $N$, and let $\Theta$ be the $\sprpr$-theory $\tF \cup \{\text{\bob $\Ord N$\eob} \} \cup \{ \bob N > \obsub{\hat n} \eob \mid n \in \omega\}$. Recall that for any $x \in \HF$, $\hat x$ is a canonical $\spr$-term denoting $x$. Since $\tF$ is consistent, so is $\Theta$. By the completeness theorem \ref{z}, there is a satisfactory structure $\str S$ such that $\str S \models \Theta$.

The ordinals of $\str S$ are linearly ordered, with an initial segment isomorphic to $(\omega;<)$. We suppose for simplicity that this segment (with its order) is actually $(\omega; <)$.

Let $\bar N = N^{\str S}$ and let $\bar f$ be the unique element of $|\str S|$ such that $\str S \models R'[\bar N,\bar N, \bar f]$. Let $f$ be the corresponding function from the ordinals in $\str S$ preceding $\bar N$ into $\Ord^{\str S}$.

Let $\str S'$ be the substructure of $\str S$ such that $|\str S'| = \bigcup_{n \in \omega} V_{f(n)}^{\str S}$. By construction, $\str S'$ is an initial segment of $\str S$ containing $\bar N$ and is a $\{\theta\}$-elementary substructure of $\str S$. Hence, $\str S' \models \theta$, so $\str S' \models \tF$, and
\[
\str S' \models \opexists f'\,\, R'(N, N, f').
\]
Let $\bar f'$ be the unique element of $|\str S'|$ such that
\[
\str S' \models R'[\bar N, \bar N, \bar f'],
\]
and let $f'$ be the corresponding function from the ordinals in $\str S'$ preceding $\bar N$ into $\Ord^{\str S'}$. Note that $\Dom f' = \Dom f$. Since $\str S'$ is a $\{\theta\}$-elementary substructure of $\str S$, $f' = f$. This contradicts the fact that $f \restriction \omega$ is cofinal in $\Ord^{\str S'}$.
\end{proof}

\subsection{Inner models, elementary embeddings, etc.}

The theory of partial satisfaction outlined above provides simple and useful definitions of some of the more problematic notions in the metatheory of set theory. For example, an \emph{inner model} may be defined as a transitive proper class $M$ such that $(M; \in) \gbmodels \tzf$.\footnote{Note that according to this definition, it is a theorem of $\tgb$ that $V$ is an inner model of $\tzf$ and $L$ is an inner model of $\tzfc$.} An elementary substructure of a $\rho$-structure $\str S$ (which may be a proper class) may be defined as a substructure $\str S'$ of $\str S$ with the property that for every $\rho$-formula $\phi$ and every $\{\phi\}$-satisfaction relation $S$ for $\str S$, the restriction of $S$ to $\str S'$ is a $\{\phi\}$-satisfaction relation for $\str S'$. A function $j : \str S \to \str T$ is elementary iff it is an isomorphism of $\str S$ with an elementary substructure of $\str T$.
 
Given the simplicity and utility of these ideas, it is somewhat surprising that they have not gained greater currency in the exposition of the metatheoretical aspects of set theory, especially given that fact that it is standard practice to employ proper classes for this purpose---albeit informally as the extensions of formulas (in which quantification is necessarily restricted to sets, since only sets are actually supposed to exist). Since the definition of $\gbmodels$, etc., involves quantification over classes, the explanation may reside in an instinctive aversion to such quantification, deriving from a knowledge of the paradoxes that lurk beyond the pale.

Adherence to the convention that class variables are not to be quantified is illustrated in \cite{Solovay:1978}, which provides a formal definition of \emph{inner model} by means of a particular conjunction $\sigma$ of axioms of $\tzfc$ and a formula $\Inn(M)$ such that
\begin{enumerate}
\item $\Inn(M)$ is \bob$\obsub{\sigma^M}$ and $\obsub M$ is a transitive proper class\eob, and
\item for every theorem $\theta$ of $\tzfc$, $\tzfc \proves \Inn(M) \opimplies \theta^M$.
\end{enumerate}
Another approach is to define an inner model as a transitive class that is almost universal and closed under G\"odel operations\cite[p.182]{Jech:2003}. With any of these approaches other \emph{ad hoc} arrangements are necessary to deal with elementary embeddings of inner models and related notions.

In \cite[Appendix~X]{Levy:1979} Levy makes the exclusion of class quantification explicit in his description of the logical system $P^*$, which extends the language $\lang L^{\splain}$ of pure set theory by the addition of class variables, which cannot be quantified; class terms $\{ x \mid \Phi(x)\}$, where $x$ is a set variable and $\Phi$ is any formula; and axioms that define the relations of membership and equality between sets and class terms. The stipulation that class variables are not to be quantified allows for a relatively easy syntactical proof (compared to \cite{Shoenfield:1954}) that $P^*$ is a conservative extension of the theory $P$, which is essentially the axiom of extensionality in $\lang L^{\splain}$.

\section{Conservative extension of set theories to finitely axiomatizable class theories}
\m{at}\label{at}

\begin{theorem}
\label{a}\m{a}
{\normalfont [$\tC$]} For any recursively enumerable extension $\Theta$ of $\tS$, there is a finite extension $\Theta'$ of $\tC$ such that $\complet{\Theta'}  \cap \lang L^{\splain} = \Theta$.
\end{theorem}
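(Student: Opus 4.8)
The plan is to imitate the classical finite-axiomatization argument that $\tgb$ conservatively extends $\tzf$, but to exploit the universal satisfaction predicate $\gbmodels$ to package the infinitely many instances of the $\tS$-schemata into a single sentence about $(V;\in)$. Since $\Theta$ is recursively enumerable and extends $\tS$, fix a $\Sigma_1$ (in fact, by the usual trick, a recursive) axiomatization and let $\mathsf{Ax}_\Theta(x)$ be an $\splain$-formula defining the set of (codes of) axioms of $\Theta$; this formula is available in $\tC$ since $\tC$ extends $\tS$. The candidate theory $\Theta'$ is $\tC$ together with the single sentence $\bob(V;\in)\gbmodels\Theta\eob$, which unabbreviated reads ``for every $x$, if $\mathsf{Ax}_\Theta(x)$ then $(V;\in)\gbmodels x$'' — a genuine single $\lang L^{\cplain}$-sentence, because $\gbmodels$ is a definable relation (Definition~\ref{t}) involving only quantification over the relevant $\{\phi\}$-satisfaction relations, which are sets when the assignment is empty. (One must check that ``$(V;\in)\gbmodels x$'', for $x$ a code of an $\splain$-sentence, is expressible by a single $\cplain$-formula with $x$ free; this is routine given the conventions in Section~\ref{as} that the expression-building operations are uniformly arithmetized.)

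First I would verify $\complet{\Theta'}\cap\lang L^{\splain}\supseteq\Theta$. Fix $\theta\in\Theta$. Working in $\Theta'$: from $\bob(V;\in)\gbmodels\Theta\eob$ and $\mathsf{Ax}_\Theta(\bob\hat\theta\eob)$ (which $\tC$ proves, since $\mathsf{Ax}_\Theta$ correctly decides membership for a standard code) we get $(V;\in)\gbmodels\bob\obsub{\hat\theta}\eob$. By Theorem~\ref{x}, $(V;\in)\gbmodels\tS$, and by Theorem~\ref{bl} applied inside $\tC$ — noting that $(V;\in)$ is \emph{not} known to be weakly satisfactory, so I would instead argue directly via the $\{\phi\}$-satisfaction relations for the finitely many formulas occurring in a fixed proof, exactly as in the proof of Theorem~\ref{bl}, together with the finitely many $\{\theta'\}$-satisfaction relations for the axioms $\theta'\in\tS$ used — we conclude $(V;\in)\gbmodels\bob\obsub{\hat\theta}\eob$ yields $\theta$ via Theorem~\ref{c} (the ``$\models^S\bob\hat\theta\eob\leftrightarrow\theta$'' clause). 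Actually the cleanest route: each $\theta\in\Theta$ is provable from $\tS$ together with $\theta$ itself, so it suffices to show $\tC+\bob(V;\in)\gbmodels\Theta\eob\proves\theta$; pick the code $\hat\theta$, invoke the hypothesis to get the $\{\hat\theta\}$-satisfaction relation $S$ for $(V;\in)$ with $\models^S\hat\theta$, and apply Theorem~\ref{c} to turn $\models^S\hat\theta$ into $\theta$.

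For the reverse inclusion $\complet{\Theta'}\cap\lang L^{\splain}\subseteq\Theta$ — the substantive direction — the strategy is the standard model-theoretic one, relativized to a definable inner model of $\tgb$. Given any $\lang L^{\splain}$-sentence $\sigma$ with $\Theta'\proves\sigma$, I must show $\Theta\proves\sigma$; equivalently, working in $\tS$-plus-metatheory (i.e. ultimately in $\tC$, which is conservative over $\tS$ by \eqref{l}), show that any model of $\Theta$ extends to a model of $\Theta'$ agreeing on $\lang L^{\splain}$. The classical device: given $\str M\models\Theta$, let the classes of the expanded structure be the \emph{definable (with set parameters)} subclasses of $\str M$, i.e. build the ``predicative classes'' structure $\str M^+$; this satisfies $\tgb$ minus \axiom{Infinity}, hence $\tC$, because $\tC$'s class comprehension is exactly predicative. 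The point is then that $\str M^+\models\bob(V;\in)\gbmodels\Theta\eob$: for each axiom $\theta$ of $\Theta$, since $\str M\models\theta$ and $\str M^+$'s classes include all the requisite $\{\theta\}$-satisfaction relations for $(\str M;\in)$ — each such relation being definable over $\str M$ because $\theta$ is a fixed formula — every $\{\theta\}$-satisfaction relation witnesses $\models^S\theta$, so $\str M^+\gbmodels\theta$. Thus $\str M^+\models\Theta'$ and agrees with $\str M$ on $\lang L^{\splain}$, giving $\Theta\proves\sigma\Leftrightarrow\Theta'\proves\sigma$ for $\splain$-sentences by completeness (Theorem~\ref{z}), all carried out in $\tC$ and transferred to $\tS$.

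The main obstacle is making the above ``predicative classes'' construction go through \emph{internally in $\tC$} rather than in a set-theoretic metatheory, since the whole point is a $\tC$-proof (or $\tS$-proof) — one cannot literally form $\str M$ and $\str M^+$ as sets when $\str M$ is a proper class. The resolution, and the technical heart of the argument, is the same as in the known $\tzf$-to-$\tgb$ result: instead of building $\str M^+$ one argues proof-theoretically. Given an $\lang L^{\cplain}$-proof $\pi$ of an $\splain$-sentence $\sigma$ from $\Theta'$, only finitely many class-comprehension instances and finitely many ``unwindings'' of $\bob(V;\in)\gbmodels\Theta\eob$ occur; each unwinding invokes finitely many axioms $\theta_1,\dots,\theta_k\in\Theta$ and asserts the existence, for each, of a $\{\theta_i\}$-satisfaction relation over $(V;\in)$ satisfying $\theta_i$. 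By Theorem~\ref{x} and Theorem~\ref{c} (over $\tS$), $\tC$ — hence, by \eqref{l}, $\tS$, hence $\Theta$ — proves $\theta_i$ and hence proves the existence of such a (definable) satisfaction relation; and the finitely many class instances are eliminable by replacing class variables with the $\splain$-formulas defining them, exactly as in Levy's $P^*$-over-$P$ argument cited at the end of Section~2. Carrying out this elimination and verifying that the transformed proof is a $\Theta$-proof of $\sigma$ is the step that will require the most care; everything else is bookkeeping with the conventions of Section~\ref{as}.
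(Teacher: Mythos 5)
Your overall architecture is the paper's: a single sentence asserting that $(V;\in)$ satisfies every axiom of $\Theta$, the forward inclusion via canonical names and Theorem~\ref{c}, and the reverse inclusion by expanding a model $\str M$ of $\Theta$ by its definable subclasses and then eliminating class variables proof-theoretically. But there is a genuine gap at the heart of the reverse inclusion, and it is forced by your choice of $\theta'$. You take $\theta'$ to be \bob$(V;\in)\gbmodels\Theta$\eob{} with $\gbmodels$ literally as in Definition~\ref{t}, i.e.\ quantifying over $\{x\}$-satisfaction relations, and you verify that the expanded structure satisfies $\theta'$ only ``for each axiom $\theta$ of $\Theta$'', i.e.\ for standard axioms. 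The quantifier ``for every $x$ with $\mathsf{Ax}_\Theta(x)$'' is, however, evaluated inside $\str M$, so in a nonstandard model it ranges over nonstandard sentences that $\str M$ merely believes to be axioms; for an arbitrary recursively enumerable presentation these need not be instances of any schema, ``$\str M\models x$'' is not even defined for them, and some of them (e.g.\ nonstandard quantifier-free sentences containing $0\opneq 0$ as a conjunct, which a deviant but extensionally correct presentation can produce in a model of $\opneg\Con\tS$) have definable $\{x\}$-satisfaction relations that falsify them. For such presentations your $\Theta'$ actually proves $\Con\tS$ and is \emph{not} conservative. This is precisely why the paper does not use Definition~\ref{t} here but instead formulates $\theta'$ as in (\ref{ad}), in terms of \emph{the} $\Phi^{\splain}_n$-satisfaction relation with both the axiom and its enumeration witness bounded in $V_n$: for nonstandard $n$ the class $\Phi^{\splain}_n$ contains every standard formula, so any such relation in $M'$ would restrict to a full truth definition for $\str M$ definable over $\str M$, which the G\"odel--Tarski undefinability argument rules out, making the nonstandard case vacuous. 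That Tarski argument is the crux of Claims~\ref{g} and \ref{ac} and is entirely absent from your proposal.

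A second, related problem is your proof-theoretic fallback for the finitary version. You assert that each ``unwinding'' of $\theta'$ in a $\Theta'$-proof invokes finitely many axioms $\theta_1,\dots,\theta_k\in\Theta$ and that $\tC$, hence $\tS$, hence $\Theta$, proves each $\theta_i$. Neither half is right: $\tC$ does not prove arbitrary axioms of an arbitrary r.e.\ extension of $\tS$, and a proof from $\Theta'$ does not decompose into appeals to finitely many standard axioms of $\Theta$, since $\theta'$ quantifies internally over all axioms at once. What the paper actually does is first show, model-theoretically (again via the Tarski argument, using only a satisfaction relation of bounded class-quantifier depth for $\str M'$), that every instantiation $\theta^\phi$ of $\theta'$ at a definable class is \emph{true in} the given model $\str M$ of $\Theta\cup\{\opneg\sigma\}$, and only then applies the Shoenfield-style elimination to obtain an $\splain$-proof of $\sigma$ from premises true in $\str M$; the $\theta^\phi$ are not available as theorems of $\Theta$ by any direct argument. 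Your treatment of the forward inclusion is essentially correct and agrees with Claim~\ref{e}.
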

Note that this is the strongest possible theorem along these lines, inasmuch as the deductive closure of a finite theory is necessarily recursively enumerable. Note also that it is proved in $\tC$, which does not have \axiom{Infinity}. Since $\tC$ is a conservative extension of $\tS$, if we stated the straightforward translation of the theorem into $\lang L^{\splain}$ it would be a theorem of $\tS$.

This should not be confused with the superficially similar results of Kleene\cite{Kleene:1952} and Craig and Vaught\cite{Craig:1958}, which produce a finitely axiomatizable conservative extension of a recursively enumerable $\rho$-theory $\Theta$ (which is required to have only infinite models) by introducing additional predicates with axioms asserting that they represent the $\rho$-language and satisfaction predicate, and that all sentences of $\Theta$ are true. 

To highlight the issues surrounding the existence of satisfaction relations and the implications for provability, we first give a proof of the theorem in $\tzf$, i.e., using \axiom{Infinity}, and we then show how to accomplish it in $\tC$.

\begin{proof}[Infinitary proof of Theorem~\ref{a}] Suppose $\Theta$ is a recursively enumerable extension of $\tS$. Let $D$ be an $\splain$-formula with two free variables, all of whose quantifiers are bounded, such that for all $x \in \HF$, $x \in \Theta \ciff \exists y \in \HF\,\, D(x,y)$.

Let $\Theta' = \tC \cup \{ \theta'\}$, where $\theta' =$
\m{ad}
\begin{display}
\label{ad}
\normalfont\bmob{}$\forall S\, \forall n \in \omega\, \forall x \in V_n\,$(if $\exists y \in V_n\,\, \obsub D(x,y)$ and $S$ is the $\Phi^{\splain}_n$-satisfaction relation for $V$, then $\models^S x$)\emob.
\end{display}
$\theta'$ says roughly that for every sentence $x \in \Theta$, $V \gbmodels x$, making use of a slight modification of the universal satisfaction predicate as given by Definition~\ref{t}. Since Theorem~\ref{a} does not mention satisfaction, we are free to define it as we wish within the proof. 

\m{e}
\begin{claim}
\label{e}
$\complet{\Theta'} \cap \lang L^{\splain} \supseteq \complet \Theta$.
\end{claim}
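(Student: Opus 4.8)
The plan is to prove the superficially weaker inclusion $\Theta \subseteq \complet{\Theta'}$, i.e.\ that $\Theta' \proves \theta$ for every $\theta \in \Theta$. This already gives the Claim: $\complet{\Theta'}$ is deductively closed and contains $\Theta' \supseteq \Theta$, so $\complet\Theta \subseteq \complet{\Theta'}$; and since derivations from $\splain$-premises introduce no class quantifiers, $\complet\Theta \subseteq \lang L^{\splain}$, whence $\complet\Theta \subseteq \complet{\Theta'} \cap \lang L^{\splain}$. For this direction we need neither $\Theta \supseteq \tS$ nor the consistency of $\Theta$; we use only that $\Theta$ is recursively enumerable, so that the bounded formula $D$ is available.

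So I would fix $\theta \in \Theta$ and construct a $\Theta'$-derivation of $\theta$. Since $\theta \in \HF$ and $\exists y \in \HF\,\, D(\theta,y)$, I choose $n \in \omega$ large enough that $\theta$ is an $\splain$-formula of rank $< n$ (hence $\theta \in \Phi^{\splain}_n$) and that some $y$ with $D(\theta,y)$ lies in $V_n$. The derivation then runs as follows. First, by Theorem~\ref{o} (with $\rho = \splain$ and this $n$) together with the fact that partial valuations for a structure agree on their common domain, $\tC$ proves that there is a unique $\Phi^{\splain}_n$-satisfaction relation $S$ for $(V;\in)$. Next, since $D$ is bounded, $\tC$ proves the true bounded facts $\hat\theta \in V_{\hat n}$, $\hat y \in V_{\hat n}$, and $D(\hat\theta,\hat y)$. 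Instantiating $\theta'$ from \eqref{ad} at this $S$ and this $n$, with $x := \hat\theta$ and the witness $\hat y$, one concludes $\models^S \hat\theta$. Finally, Theorem~\ref{c} (at this $n$ and this $\theta$) provides a $\tC$-proof of the biconditional $\obsub\theta \ciff \models^S \hat\theta$; combining the last two steps yields $\Theta' \proves \theta$. Informally, the first and third steps say exactly that $\theta'$ forces $(V;\in) \gbmodels \theta$ for every $\theta \in \Theta$ --- in the mild variant of $\gbmodels$ packaged into \eqref{ad} --- while the last step records that on sentences of bounded rank this variant of $\gbmodels$ entails ordinary truth.

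The one point I would be careful about is that Claim~\ref{e} is a single assertion about the \emph{definable} classes $\complet\Theta$ and $\complet{\Theta'}$, so the construction above must be carried out uniformly in $\theta$ and in $n$. This is unproblematic because the proofs of Theorems~\ref{o} and \ref{c} are recursions on formula complexity that proceed uniformly in $n$; formalizing them yields, provably in $\tC$, that for every $n$ the theory $\tC$ proves the existence of the $\Phi^{\splain}_n$-satisfaction relation for $(V;\in)$, and that for every $n$ and every $\splain$-sentence $\theta$ of rank $<n$ the theory $\tC$ proves the corresponding disquotation biconditional. I expect this uniformity bookkeeping --- together with the single genuine choice in the argument, namely taking $n$ large enough to contain $\theta$ along with one of its $D$-witnesses, so that the single sentence $\theta'$ (whose witness quantifier is bounded by the same $V_n$) is applicable --- to be the only things that need a word of care. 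Everything else is a direct appeal to $\theta'$ and to Theorems~\ref{o} and \ref{c}, and in particular neither \axiom{Infinity} nor the completeness theorem (Theorem~\ref{z}) is invoked for this inclusion.
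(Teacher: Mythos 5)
Your proposal is correct and follows essentially the same route as the paper: fix $\theta \in \Theta$, pick $n$ bounding both $\theta$ and a $D$-witness, have $\tC$ prove the concrete facts about $\hat\theta$, $\hat y$, and $\hat n$, instantiate $\theta'$, and close with the disquotation biconditional of Theorem~\ref{c} (which already packages the existence of the $\Phi^{\splain}_n$-satisfaction relation, so your separate appeal to Theorem~\ref{o} is harmless but redundant). Your added remarks on uniformity in $\theta$ and $n$ address a point the paper leaves implicit but do not change the argument.
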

{\sc Proof.} Suppose $\theta \in \Theta$. Let $n \in \omega$ be such that $\theta \in V_n$ and there exists $y \in V_n$ such that $D(\theta, y)$. It is easy to show that $\tC \proves$ \bmob$\obsub{\hat \theta} \in V_{\obsub{\hat n}} \cand \obsub{\hat y} \in V_{\obsub{\hat n}} \cand \obsub{D(\hat \theta, \hat y)}$\emob. Thus, $\Theta' \proves$ \bmob{}for all $S$, if $S$ is the $\Phi^{\splain}_{\obsub{\hat n}}$-satisfaction relation for $V$ then $\models^S \obsub{\hat \theta}$\emob. It follows from Theorem~\ref{c} that $\Theta' \proves \theta$.\qed(\ref{e})

\m{f}
\begin{claim}
\label{f}
$\complet{\Theta'} \cap \lang L^{\splain} \subseteq \complet \Theta$.
\end{claim}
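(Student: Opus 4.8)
The plan is to show that any $\splain$-sentence provable from $\Theta'=\tC\cup\{\theta'\}$ is already in $\complet\Theta$, by producing, for each model-theoretic obstruction, a model of $\Theta$ that refutes the sentence. Concretely, I would argue the contrapositive: suppose $\sigma\in\lang L^{\splain}$ with $\sigma\notin\complet\Theta$; then $\Theta\cup\{\opneg\sigma\}$ is consistent, so by the completeness theorem \eqref{z} (available in $\tC$) there is a \emph{satisfactory} $\splain$-structure $\str M$ with $\str M\models\Theta\cup\{\opneg\sigma\}$. The goal is to expand $\str M$ to a $\cplain$-structure $\str M^+$ (adding a class sort) so that $\str M^+\models\Theta'$; since $\str M^+$ then also models $\opneg\sigma$, and $\sigma\in\lang L^{\splain}$, this gives $\Theta'\nproves\sigma$, as desired. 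Because $\Theta'$ is finite, the statement ``$\Theta'\nproves\sigma$'' can itself be witnessed by a single model, so working with one structure $\str M^+$ suffices; no reflection-type bootstrapping is needed here.

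The expansion is the standard one used in the proof that $\tgb$ conservatively extends $\tzf$: take the classes of $\str M^+$ to be exactly the subsets of $|\str M|$ that are \emph{definable over $\str M$ with set parameters}. Here is where satisfactoriness of $\str M$ is essential — since the full satisfaction relation $S_{\str M}$ for $\str M$ exists (as $\str M$ is satisfactory), the collection of such definable classes exists as an object, and one checks routinely that $(\str M; \{\text{definable classes}\})$ satisfies $\tC$: extensionality for classes, pairing/union/etc. from $\str M\models\tzf$ (minus \axiom{Infinity}), class comprehension for formulas without bound class variables (again using $S_{\str M}$), and the separation axiom that a class intersected with a set is a set (using that the intersection of a definable class with a set is definable, hence a set of $\str M$ by \axiom{Comprehension} in $\str M$). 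This part is ``well known'' modulo bookkeeping and I would not spell out every axiom.

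The one genuinely new thing to verify is that $\str M^+\models\theta'$, i.e. that the specific sentence \eqref{ad} holds in the expansion. Unpacking \eqref{ad}: for every class $S$, every $n\in\omega^{\str M^+}$, and every $x\in V_n^{\str M^+}$, if $x$ satisfies $D$ with some witness in $V_n$ and $S$ is \emph{the} $\Phi^{\splain}_n$-satisfaction relation for $V$, then $\models^S x$. Fix such $S$, $n$, $x$. Since $n\in\omega$ (a genuine natural number, as $\str M$'s $\omega$ is externally standard — or rather, we only need: for standard $n$ the relation is pinned down; but $\theta'$ quantifies over all $n\in\omega$ internally, so care is needed), the $\Phi^{\splain}_n$-satisfaction relation for $V$ in the sense of $\str M^+$ is the restriction of $S_{\str M}$ to formulas of rank $<n$; uniqueness of partial satisfaction relations (proved in $\tC$, stated just before Theorem~\ref{m}) forces $S$ to agree with $S_{\str M}$ there. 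The hypothesis ``$\exists y\in V_n\; D(x,y)$'' says, by absoluteness of the bounded formula $D$, that $x$ is (the code of) an element of $\Theta$ from $\str M$'s point of view — and since $\Theta$ is a true theory in $\str M$ (because $\str M\models\Theta$ and $\Theta$ is r.e., so $\str M$ correctly computes membership in $\Theta$ for its own codes, at least for standard ones) we get $\str M\models x$, hence $\models^{S_{\str M}} x$, hence $\models^{S} x$.

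The main obstacle — and the step I would flag as needing real care rather than routine checking — is the interaction between the \emph{internal} quantifier $\forall n\in\omega$ in $\theta'$ and possible nonstandard $n$ in $\str M$. For nonstandard $n$, ``$\exists y\in V_n\;D(x,y)$'' need not imply that $x$ codes an actual axiom of $\Theta$ (the r.e. definition $D$ can have spurious witnesses at nonstandard stages), so one cannot conclude $\str M\models x$. The resolution is that this doesn't matter: $\theta'$ as written in \eqref{ad} is harmless at nonstandard $n$ precisely because, for nonstandard $n$, there provably is \emph{no} $\Phi^{\splain}_n$-satisfaction relation for $V$ in $\str M^+$ — such a relation would be a definable class satisfying Tarski's conditions for a (nonstandardly) unbounded collection of formulas, from which one can internally define a full satisfaction predicate for an initial segment that includes the liar-type sentence, contradicting Tarski's theorem inside $\str M$; alternatively, and more cleanly, one invokes Corollary~\ref{y}-style reasoning: $\str M\models\tS$ together with $\str M\models\Con\tS$ (which holds since $\str M$ has a model of $\tS$, namely a satisfactory one) blocks the existence of the relevant satisfaction class at nonstandard rank. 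So the implication in \eqref{ad} is vacuously true there, and the only content is at standard $n$, which we handled above. Having established $\str M^+\models\Theta'$ and $\str M^+\models\opneg\sigma$, the claim follows.

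\begin{proof}[Proof of Claim~\ref{f}] We argue the contrapositive. Suppose $\sigma$ is an $\splain$-sentence with $\sigma \notin \complet\Theta$. Then $\Theta \cup \{\opneg\sigma\}$ is consistent, so by the completeness theorem \eqref{z} there is a satisfactory $\splain$-structure $\str M$ with $\str M \models \Theta \cup \{\opneg\sigma\}$; let $S_{\str M}$ be the full satisfaction relation for $\str M$. Expand $\str M$ to a $\cplain$-structure $\str M^+$ by taking as classes exactly the subsets of $|\str M|$ definable over $\str M$ with set parameters (using $S_{\str M}$, this collection exists). One verifies, exactly as in the standard proof that $\tgb$ conservatively extends $\tzf$, that $\str M^+ \models \tC$: class extensionality is immediate; the set axioms hold because $\str M \models \tS$; class \axiom{Comprehension} for formulas without bound class variables holds because $S_{\str M}$ lets us form the relevant definable class; and the \axiom{Separation} axiom (a class meets a set in a set) holds because the intersection of a definable class with a set of $\str M$ is definable, hence a set by \axiom{Comprehension} in $\str M$.

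It remains to check $\str M^+ \models \theta'$, where $\theta'$ is \eqref{ad}. Fix (in $\str M^+$) a class $S$, an $n \in \omega$, and an $x \in V_n$, and assume $\exists y \in V_n\, D(x,y)$ and that $S$ is the $\Phi^{\splain}_n$-satisfaction relation for $V$; we must show $\models^S x$. If $n$ is nonstandard, there is in fact no $\Phi^{\splain}_n$-satisfaction relation for $(V;\in)$ in $\str M^+$: such a relation would, via the canonical naming convention, yield inside $\str M$ a satisfaction predicate for an initial segment of $\lang L^{\splain}$ long enough to contain a liar-type sentence, contradicting Tarski's theorem in $\str M$ (equivalently, since $\str M \models \tS$ and $\str M$ has a satisfactory model of $\tS$, we have $\str M \models \Con\tS$, and Corollary~\ref{y}-type reasoning blocks this relation). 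So the hypothesis is false and the implication holds vacuously. If $n$ is standard, then by uniqueness of partial satisfaction relations (proved in $\tC$) $S$ agrees with the restriction of $S_{\str M}$ to $\Phi^{\splain}_n$. Since $D$ is bounded it is absolute between $\str M$ and its $V_n$, so $\exists y \in V_n\, D(x,y)$ gives $\exists y \in \HF\, D(x,y)$ in $\str M$, i.e.\ $x \in \Theta$ as computed in $\str M$; as $\Theta$ is r.e.\ and $n$ is standard, $x$ codes a genuine axiom $\theta$ of $\Theta$, so $\str M \models \theta$, hence $\models^{S_{\str M}} x$ by Theorem~\ref{c}, hence $\models^S x$.

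Thus $\str M^+ \models \Theta' = \tC \cup \{\theta'\}$. Since also $\str M^+ \models \opneg\sigma$ and $\sigma$ is an $\splain$-sentence, we conclude $\Theta' \nproves \sigma$, i.e.\ $\sigma \notin \complet{\Theta'}$. This proves $\complet{\Theta'} \cap \lang L^{\splain} \subseteq \complet\Theta$.
\end{proof}
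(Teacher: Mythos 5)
Your proof is correct and follows essentially the same route as the paper's: expand a satisfactory model of $\Theta\cup\{\opneg\sigma\}$ by its classes definable over $\str M$, verify $\tC$, and establish $\theta'$ by splitting on standard versus nonstandard $n$, with the nonstandard case disposed of by the observation that such an $S$ would make the full satisfaction relation for $\str M$ definable over $\str M$, contradicting Tarski/the liar construction (the paper phrases this as deriving a contradiction rather than as vacuous truth of the implication, but it is the same argument). One caveat: your ``alternative'' justification for the nonstandard case via $\str M\models\Con\tS$ and Corollary~\ref{y} does not work---a model of $\Theta$ need not satisfy $\Con\tS$, and Corollary~\ref{y} is an unprovability statement about $\tC$, not a fact about an arbitrary model---but since the liar argument you give first is the one carrying the weight, this aside is inessential.
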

{\sc Proof.} Suppose $\sigma \in \complet{\Theta'} \cap \lang L^{\splain}$. We will use the completeness theorem to show that $\Theta \proves \sigma$ by showing that $\sigma$ holds in any satisfactory countable model of $\Theta$. Suppose, therefore, that $\str M = (M; E)$ is a satisfactory countable model of $\Theta$. Like all models of $\tS$, $\str M$ has an initial segment that is isomorphic to $(\HF; \in)$, and in the interest of efficiency we arrange that this initial segment actually is $(\HF; \in)$. Since we are working in $\tzf$, there are plenty of sets outside $\HF$.

We now extend $\str M$ to a model $\str M' = (M, M'; E')$ of $\tC$ by adding, for each subset $A$ of $M$ definable over $\str M$ (from a parameter in $M$) that is not already the $E$-extension of a member of $M$, a new element whose extension is $A$. (These are the proper classes.) We will identify each $X \in M' \setminus M$ with $\{ x \in M \mid x \binop{E'} X \}$.

This is the standard construction for the infinitary proof that $\tC$ is a conservative extension of $\tS$. Since we are working in $\tzf$, $\str M'$ is a set, and therefore is satisfactory, so all references to satisfaction in $\str M'$ may be understood in the usual way. It is easy to show that $\str M' \models \tC$.

\m{g}
\begin{claim}
\label{g}
$\str M' \models \theta'$.
\end{claim}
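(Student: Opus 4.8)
The plan is to work throughout in $\tzf$, fix $S$, $n$, $x$ witnessing inside $\str M'$ the hypothesis of $\theta'$---so $n \in \omega^{\str M'}$, $x \in V_n^{\str M'}$, $\str M' \models \opexists y \in V_n\; D(x,y)$, and $\str M' \models$ ``$S$ is the $\Phi^{\splain}_n$-satisfaction relation for $V$''---and to derive that the conclusion $\models^S x$ of $\theta'$ holds in $\str M'$. The argument splits according to whether $n$ is a standard natural number.

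First I would show that nonstandard $n$ cannot occur. By the construction of $\str M'$, every class of $\str M'$ is (identified with) a subset of $M$ definable over $\str M$ from a parameter; in particular $S = \{\, a \in M : \str M \models \delta[a,\bar p]\,\}$ for some fixed $\splain$-formula $\delta$ and some $\bar p \in M$. If $n$ were nonstandard then every standard $\splain$-formula would have rank $< n$, and an external induction over the standard ranks shows that $S$ decides satisfaction in the set-structure $(M;E)$ correctly on all such formulas: $\la \phi, A\ra \in S$ iff $\str M \models \phi[A]$ for every standard $\splain$-formula $\phi$ and every finite assignment $A$ into $M$. (Here one uses that the recursive satisfaction clauses---in particular the quantifier clause, whose range is $M$---are evaluated the same way by $\str M'$ as in $\tzf$, and that $\str M'$, being a set, is satisfactory by Theorem~\ref{w}.) Feeding this correctness property a formula produced by the diagonal lemma for $\tS$, with the parameter $\bar p$ absorbed into the assignment rather than into the formula---so that the diagonalized formula remains standard and parameter-free---then yields the usual Tarski contradiction. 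Hence $\str M'$ contains no such $S$ when $n$ is nonstandard, and $\theta'$ is vacuously verified for that $n$.

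For standard $n$ the verification is routine. Then $V_n^{\str M'} = V_n^{\str M}$ is the true $V_n \subseteq \HF$ (the $\HF$-part of $\str M$, hence of $\str M'$, having been arranged to be the true $\HF$), and $\Phi^{\splain}_n$ is likewise computed correctly; in particular $x$ is a genuine hereditarily finite set. Since $D$ has only bounded quantifiers it is absolute, so $\str M' \models \opexists y \in V_n\; D(x,y)$ yields a true $y \in V_n \subseteq \HF$ with $D(x,y)$, whence $x \in \Theta$ by the choice of $D$; and since $\str M \models \Theta$ this gives $\str M \models x$ in the true sense. It remains to match $\models^S x$, as evaluated in $\str M'$, with $\str M \models x$: since $\str M'$ is a set it is satisfactory (Theorem~\ref{w}), so $S$ genuinely satisfies the recursive clauses of a $\Phi^{\splain}_n$-satisfaction relation for $(M;E)$, and therefore, by the uniqueness of partial valuations, it coincides with the true such relation (which exists by Theorem~\ref{w}, applied in $\tzf$). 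Hence $\models^S x$ holds in $\str M'$ iff $\la x, \emptyset\ra \in S$ iff $\str M \models x$, which has been established. This proves Claim~\ref{g}.

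The step I expect to be the main obstacle is disposing of nonstandard $n$: one has to observe that $\str M'$, whose classes are only parametrically definable over $\str M$, cannot carry a partial satisfaction class correct for all standard formulas of the set-structure $(M;E)$, and the clean way to see this is the diagonal argument with the parameter displaced into the assignment---which works precisely because satisfaction relations act on formulas with free variables, not merely on sentences. By contrast, for standard $n$ everything reduces to the absoluteness of bounded formulas and to the uniqueness of partial valuations already in hand.
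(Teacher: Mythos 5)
Your proof is correct and follows essentially the same route as the paper's: the same split on whether $n$ is standard, the same use of the boundedness/absoluteness of $D$ and the agreement of $S$ with genuine $\Phi^{\splain}_n$-satisfaction for $\str M$ in the standard case, and the same G\"odel--Tarski diagonal argument (with the parameter passed through the assignment slot so the diagonalized formula stays standard) to rule out a class definable over $\str M$ that is correct on all standard formulas in the nonstandard case. The only cosmetic difference is that you present the nonstandard case as showing no such $S$ can exist (so $\theta'$ holds vacuously there), whereas the paper folds both cases into a single proof by contradiction; the underlying arguments are identical.
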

{\sc Proof.} Remember that we have arranged that $\HF$ is an initial segment of $\str M$ and therefore of $\HF^{\str M'}$. Suppose toward a contradiction that $n \in \omega^{\str M}$, $\theta, y \in (V_n)^{\str M}$, and $S \in M'$ are such that
\m{h}
\begin{display}
\label{h}
$\str M' \models$
\begin{enumerate}
\item $D[\theta,y]$,
\item \bmob$\obass S$ is the $\Phi^{\splain}_{\obass n}$-satisfaction relation for $V$\emob, and
\item \bmob$\nmodels^{\obass S} \obass \theta$\emob.
\end{enumerate}
\end{display}
Suppose first that $n$ is in the standard part of $\omega^{\str M}$. Then $n \in \omega$; $\theta, y \in V_n$; and $D(\theta,y)$; so $\theta \in \Theta$. Thus, $\str M \models \theta$. Also, $\{ x \in M \mid \str M' \models \bmob\obass x \in \obass S\emob \}$ is the $\Phi^{\splain}_n$-satisfaction relation for $\str M$. Thus, by virtue of (\ref{h}.3), $\str M' \models \opneg \theta$, contrary to the fact that $\str M \models \theta$.

Thus, $n$ is in the nonstandard part of $\omega^{\str M}$. Note that $V_\omega \subseteq V_n^{\str M}$, and it is straightforward to show by induction within $V_\omega$ that $S$ restricted to formulas in $V_\omega$ is the satisfaction relation for $\str M$, i.e., for any $\splain$-formula $\psi$ and $\str M$-assignment $A$ for $\psi$,
\m{i}
\refstepcounter{theorem}
\begin{equation}
\label{i}
\str M' \models \bmob\models^{\obass S} \obass \psi [\obass A] \emob \ciff \str M \models \psi[A].
\end{equation}
By virtue of the construction of $\str M'$, $S$ is definable over $\str M$ from a parameter in $M$. This allows us to apply the G\"odel-Tarski theorem on the undefinability of truth, the proof of which we reprise for the present application.

Let $\phi \in V_\omega$ and $z \in M$ be such that $\phi$ is an $\splain$-formula with free variables $\vbl_0, \vbl_1, \vbl_2$, and for all $\psi, A \in M$ such that $\str M \models$ \bmob$\obass \psi$ is an $\splain$-formula and $\obass A$ is an assignment of its free variables\emob,
\[
\str M' \models \bmob\models^{\obass S} \obass \psi [\obass A] \emob
 \ciff \str M \models \phi[\psi, A, z].
\]
It is easy to obtain from $\phi$ an $\splain$-formula $\phi'$ such that for all $\psi, a \in M$ such that $\str M \models$ \bmob$\obass \psi$ is an $\splain$-formula with free variables $\vbl_0, \vbl_1$\emob,
\m{aa}
\refstepcounter{theorem}\begin{equation}
\label{aa}
\str M' \models \bmob\models^{\obass S} \obass\psi [\obass a, \obass z]\emob
 \ciff \str M \models \phi'[\psi, a, z].
\end{equation}
Let $\psi = \opneg \phi'(\vbl_0, \vbl_0, \vbl_1)$. Note that $\phi', \psi \in V_\omega$, so by virtue of (\ref{aa}) and (\ref{i}),
\[
\begin{split}
\str M \models \psi[\psi, z]
 &\ciff \str M \models \opneg \phi'[\psi, \psi, z]\\
 &\ciff \str M' \models \bmob \nmodels^{\obass S} \obass\psi [ \obass\psi, \obass z ] \emob\\
 &\ciff \cneg \str M \models \psi[\psi, z]\\
 &\ciff \str M \models \opneg \psi[\psi, z].
\end{split}
\]
This contradiction establishes the claim.\qed(\ref{g})

Recall that we have supposed that $\sigma \in \complet{\Theta'} \cap \lang L^{\splain}$, and that $\str M$ is an arbitrary satisfactory countable model of $\Theta$. We have constructed the satisfactory structure $\str M'$ with the same ``sets'' as $\str M$, and we have shown that $\str M' \models \Theta'$. Hence $\str M' \models \sigma$, so $\str M \models \sigma$. Thus, $\Theta \proves \sigma$, i.e., $\sigma \in \complet\Theta$.\qed(\ref{f})

Claims~\ref{e} and \ref{f} together establish the theorem.
\end{proof}

\begin{proof}[Finitary proof of Theorem~\ref{a}] The finitary\footnote{Note that we construe `finitary' broadly to include $\tC$. As pointed out in the remark following the statement of the theorem, the theorem and proof could be given in $\tS$.} proof is identical to the preceding argument through the proof of Claim~\ref{e} but differs beginning with the proof of Claim~\ref{f}. We restate the claim here.
\m{ab}
\begin{claim}
\label{ab}
$\complet{\Theta'} \cap \lang L^{\splain} \subseteq \complet \Theta$.
\end{claim}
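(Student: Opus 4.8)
This is Claim~\ref{ab}, the hard (finitary) half of Theorem~\ref{a}: show $\complet{\Theta'} \cap \lang L^{\splain} \subseteq \complet\Theta$, working in $\tC$ rather than $\tzf$. The infinitary proof built a satisfactory class model $\str M'$ of $\tC$ on top of a satisfactory countable model $\str M$ of $\Theta$, and derived a contradiction from a failure of $\theta'$ using the undefinability of truth. The obstruction to repeating this in $\tC$ is exactly the phenomenon flagged throughout Section~\ref{as}: we cannot prove in $\tC$ that the class model $\str M'$ is satisfactory, so ``$\str M' \models \Theta'$'' is not even expressible the way it was, and in particular the step ``$\str M' \models \theta'$'' (Claim~\ref{g}) must be recast in terms of partial satisfaction relations.

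**The plan.** As before, suppose $\sigma \in \complet{\Theta'} \cap \lang L^{\splain}$ and aim to show $\Theta \proves \sigma$ via the completeness theorem~\eqref{z}: take an arbitrary satisfactory countable $\str M = (M;E) \models \Theta$, arranged so that its $\HF$-part is literally $(\HF;\in)$, and build the ``class expansion'' $\str M' = (M, M'; E')$ by adjoining as new elements all subsets of $M$ definable over $\str M$ from parameters in $M$ that are not already $E$-extensions of members of $M$. One still checks easily that $\str M'$ satisfies each axiom of $\tC$ in the following sense: $\str M'$ is \emph{weakly satisfactory}, and for each $\rho$-formula $\chi$ and $\{\chi\}$-satisfaction relation for $\str M'$, the relevant instances hold --- equivalently, $\str M' \gbmodels \tC$ in the sense of Definition~\ref{t}. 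The point is that weak satisfactoriness of $\str M'$ \emph{is} provable in $\tC$ (as it is for any countable structure, by a Henkin-style or rank-stratified argument, cf.\ Theorems~\ref{w}, \ref{o}), even though full satisfactoriness is not. Then the key step becomes: show $\str M' \gbmodels \theta'$, and conclude by Theorem~\ref{bl} (using weak satisfactoriness of $\str M'$) that $\str M' \gbmodels \Theta'$, hence $\str M' \gbmodels \sigma$; since $\sigma$ is an $\splain$-sentence this transfers down to $\str M$, giving $\str M \models \sigma$, and completeness finishes.

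**The core step, and the obstacle.** The real work is showing $\str M' \gbmodels \theta'$, i.e.\ that $\models^S \theta'$ for every $\{\theta'\}$-satisfaction relation $S$ for $\str M'$ --- and here the subtlety, and I expect the main obstacle, is that $\theta'$ itself quantifies over (internal) satisfaction relations ``$S$ is the $\Phi^{\splain}_n$-satisfaction relation for $V$''. Fix such an external $\{\theta'\}$-satisfaction relation $S$ for $\str M'$. Suppose toward a contradiction that $\not\models^S \theta'$; unwinding, there are $n \in \omega^{\str M}$, $\theta,y \in (V_n)^{\str M}$, and an \emph{internal} object $T \in M'$ such that $\str M'$ (via $S$) thinks $D[\theta,y]$ holds, $T$ is the $\Phi^{\splain}_n$-satisfaction relation for $V^{\str M'}$, and $\not\models^T \theta$. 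Split on whether $n$ is standard. If $n$ is standard, then genuinely $\theta \in \Theta$ and $\str M \models \theta$ (the last by $\str M \models \Theta$, read off through a $\{\theta\}$-satisfaction relation, which exists by Theorem~\ref{o} since $\str M$ is satisfactory), while the internal $T$ restricted to standard formulas computes the genuine partial satisfaction relation for $\str M$ --- contradiction with $\not\models^T\theta$. If $n$ is nonstandard, then $V_\omega$ (the real one) is an initial segment of $(V_n)^{\str M'}$, and one argues by induction \emph{inside $V_\omega$} --- a legitimate $\tC$-induction, since $V_\omega = \HF$ and the relevant class exists --- that $T$ restricted to real $\splain$-formulas gives, correctly, $\str M' \models \models^{T}\psi[A] \iff \str M \models \psi[A]$; but $T$ is definable over $\str M$ from a parameter in $M$, so the Gödel--Tarski undefinability-of-truth diagonalization (reprised exactly as in the proof of Claim~\ref{g}: form $\phi'$, set $\psi = \opneg\phi'(\vbl_0,\vbl_0,\vbl_1)$, and derive $\str M \models \psi[\psi,z] \iff \str M \models \opneg\psi[\psi,z]$) yields a contradiction. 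The delicate points to get right in $\tC$ are: (i) that weak satisfactoriness of the class structure $\str M'$ is genuinely provable (so Theorem~\ref{bl} applies), and (ii) that the diagonal argument only ever needs satisfaction relations for \emph{sets} ($\str M$, or finite fragments), never a full satisfaction relation for the proper-class part of $\str M'$ --- which is precisely why the argument that failed to be finitary in spirit can nonetheless be carried out here.
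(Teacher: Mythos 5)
Your overall architecture tracks the paper's: obtain a satisfactory model $\str M$ of $\Theta \cup \{\opneg\sigma\}$ from the completeness theorem, expand it to a class structure $\str M'$, and verify $\theta'$ by splitting on whether $n$ is standard and invoking the G\"odel--Tarski diagonalization in the nonstandard case. But there is a genuine gap at the step you yourself flag as delicate point (i): you assert that weak satisfactoriness of $\str M'$ is provable in $\tC$ ``as it is for any countable structure,'' citing Theorems~\ref{w} and \ref{o}, and then close the argument by applying Theorem~\ref{bl}. Neither citation supports the claim. Theorem~\ref{w} applies only to \emph{set} structures, and in $\tC$ (where possibly $V = \HF$) the Henkin model $\str M$ may already be a proper class, so $\str M'$ certainly is; Theorem~\ref{o} is an external schema yielding $\Phi^\rho_n$-satisfaction relations only for each metatheoretically given $n$, not the internal statement ``for every $\cplain$-formula $\phi$ there is a $\{\phi\}$-satisfaction relation for $\str M'$.'' The obstruction is exactly the one explained after Theorem~\ref{m}: the internal induction needed to build a $\Phi$-satisfaction relation for the (internally finite, possibly nonstandard-rank) set $\Phi$ of formulas occurring in a given $\Theta'$-proof of $\sigma$ requires comprehension over a condition with bound class quantifiers, which $\tC$ does not supply. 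The paper states this explicitly --- ``Theorem~\ref{s} is not available, because we have not shown that $\str M'$ is weakly satisfactory'' --- and only constructs the $\Psi$-satisfaction relation $T'$ for formulas of class-quantifier depth at most $2$, which suffices for $\theta'$ and the axioms of $\tC$ but not for an arbitrary proof.

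The paper's way around this is the missing idea in your proposal: instead of a semantic soundness argument over $\str M'$, it uses the Shoenfield-style \emph{syntactic} elimination of class variables (the machinery of the finitary proof of (\ref{l})) to convert the $\Theta'$-proof of $\sigma$ into an $\splain$-proof of $\sigma$ whose premises are instances of $\tS$-axioms together with sentences $\theta^\phi$ obtained from $\theta'$ by substituting explicit definitions for the class variable $S$; each such premise is then shown true in $\str M$ (which \emph{is} satisfactory), so $\str M \models \sigma$, the desired contradiction. A secondary but real problem with your construction: in $\tC$ you cannot form $M'$ by ``adjoining definable subsets of $M$,'' since these may be proper classes and hence not elements of anything; the paper instead adjoins \emph{definitions} (formula--parameter pairs), accepting that distinct elements of $M'$ may have the same extension, and likewise works with the standard part of $\HF^{\str M}$ rather than arranging $\HF$ to be an initial segment of $\str M$ (which is impossible if $V = \HF$).
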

{\sc Proof.} Suppose $\sigma \in \complet{\Theta'} \cap \lang L^{\splain}$, and suppose toward a contradiction that $\Theta \nproves \sigma$, i.e., $\Theta \cup\{\opneg\sigma\}$ is consistent. Using Theorem~\ref{z}, let $\str M = (M; E)$ be a satisfactory structure such that $\str M \models \Theta \cup\{\opneg\sigma\}$, and let $T$ be the satisfaction relation for $\str M$.

Like all models of $\tS$, $\str M$ has an initial segment that is isomorphic to $(\HF; \in)$. Since it is possible that $V = \HF$, we do not suppose that $\HF$ is itself an initial segment of $\str M$; on the contrary, we arrange that $V \setminus |\str M|$ is a proper class, and that $M \cap M_1 = 0$, where $M_1$ is the class of $\la \phi, a \ra$ such that $\phi$ is an $\splain$-formula with two free variables $\vbl_0, \vbl_1$; $a \in M$; and $\{ x \mid\, \models^T \phi[x,a] \}$ is not the $E$-extension of a member of $M$.

Let $E_1 = \{ \la x, \la \phi, a \ra \ra \mid \la \phi, a \ra \in M_1 \cand \models^T \phi[x,a] \}$. Let $M' = M \cup M_1$ and $E' = E \cup E_1$, and let $\str M' = (M, M'; E')$. $\str M'$ is the canonical expansion of $\str M$ to a model of $\tC$; however, whereas in $\tzf$ we could let $M'$ consist of definable subsets of $M$, we now let $M'$ consist of definitions of subclasses of $M$. The reason, of course, is that $M$ may be a proper class. As a consequence, distinct elements of $M'$ may have the same $E'$-extension. We nevertheless define
\[
=^{\str M'} = \{ \la x, y \ra \in M' \times M' \mid \forall z \in M\, (\la z, x \ra \in E' \ciff \la z, y \ra \in E' \ra\}. 
\]
The equivalence classes of $=^{\str M'}$ may be proper classes, so the reduction of $\str M'$ to a standard model of logic with identity (i.e., one for which the identity predicate is interpreted as the identity relation) would be slightly more involved than usual. It could be done, but there is no need, so we don't bother.

Let $\Psi$ be the class of $\cplain$-formulas with class-quantifier depth at most 2 (counting quantifier depth in terms of alternations of existential and universal quantification). Note that we impose no restriction on set quantifiers. All axioms of $\tC$ may be formulated as sentences in $\Psi$. Using the fact that $T$ is the full satisfaction relation for $\str M$, we can easily show that the $\Psi$-satisfaction relation exists for $\str M'$ (essentially by direct definition), and we let $T'$ be this relation. Note that $T' \supseteq T$. It is straightforward to show that for every $\theta$ that is an axiom of $\tC$ or an axiom of identity, $\models^{T'} \theta$.

We now state and prove the analog of Claim~\ref{g} for the present situation. Note that $\theta'$, as defined in (\ref{ad}), is in $\Psi$.
\m{ac}
\begin{claim}
\label{ac}
$\models^{T'} \theta'$.
\end{claim}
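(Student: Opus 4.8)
The plan is to mirror the structure of the infinitary argument's Claim~\ref{g} (the proof that $\str M' \models \theta'$), but now carried out using the partial satisfaction relation $T'$ in place of genuine satisfaction in a set-sized model. Recall that $\theta'$ asserts, roughly, that whenever $x \in V_n$, $\exists y \in V_n\,\, D(x,y)$, and $S$ is the $\Phi^{\splain}_n$-satisfaction relation for $V$, then $\models^S x$. So I would suppose toward a contradiction that $\str M'$ witnesses a failure: there are $n \in \omega^{\str M}$, formulas/objects $\theta, y \in (V_n)^{\str M}$, and $S \in M'$ with $\models^{T'}$ of the three conjuncts---$D[\theta,y]$, \bob $\obass S$ is the $\Phi^{\splain}_{\obass n}$-satisfaction relation for $V$\eob, and \bob $\nmodels^{\obass S}\obass\theta$\eob---all holding at the relevant assignment. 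Everything that follows is then an argument about $\str M$ and $\str M'$ using $T$ and $T'$, exactly as before.

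First I would split on whether $n$ lies in the standard part of $\omega^{\str M}$. If $n$ is standard, then $n \in \omega$ in the metatheory, $\theta, y \in V_n$ genuinely, $D(\theta,y)$ holds, so $\theta \in \Theta$; hence $\str M \models \theta$ (using $T$), i.e. $\models^T \theta$; but from the third conjunct and the fact that the $T'$-``extension'' of $S$ restricted to $\Phi^{\splain}_n$ really is the $\Phi^{\splain}_n$-satisfaction relation for $\str M$ (this is where I use that $T' \supseteq T$ and that $T$ is the full satisfaction relation for $\str M$), we get $\nmodels^T\theta$, a contradiction. So $n$ must be nonstandard. Then $V_\omega \subseteq V_n^{\str M}$, and by the usual induction inside $V_\omega$ one shows that the object $S$, as seen through $T'$, agrees with $T$ on all $\splain$-formulas in $V_\omega$: for every $\splain$-formula $\psi$ and $\str M$-assignment $A$ for $\psi$, $\str M' \models \bob \models^{\obass S}\obass\psi[\obass A]\eob$ iff $\str M \models \psi[A]$ --- the analog of \eqref{i}. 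This step needs care because $S$ is an element of $M'$ and all talk of its ``extension'' is via $T'$, but since $T'$ was built by direct definition from $T$ the recursion goes through.

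Next comes the Gödel--Tarski undefinability-of-truth step, reprised as in the infinitary proof. Here is the one genuine difference from Claim~\ref{g}: in the infinitary proof $S$ was literally a definable subset of $\str M$ because $\str M'$ was constructed from definable subsets; here $\str M'$ was built from \emph{definitions} of subclasses, so I would instead use that $S$, as an element of $M_1$ (or already the $E$-extension of a member of $M$), is via its $E'$-extension exactly $\{\, x \mid\, \models^T \phi_S[x,a]\,\}$ for some $\splain$-formula $\phi_S$ and parameter $a \in M$ --- in other words $S$'s extension is definable over $\str M$ from a parameter, which is all the diagonal argument needs. From that $\phi_S$ I extract $\phi$ and then $\phi'$ exactly as in the display preceding \eqref{aa}, set $\psi = \opneg\phi'(\vbl_0,\vbl_0,\vbl_1)$, note $\phi',\psi \in V_\omega$, and run the equivalence chain $\str M \models \psi[\psi,z] \ciff \dots \ciff \str M \models \opneg\psi[\psi,z]$ using \eqref{aa} and the $V_\omega$-agreement just established, deriving the final contradiction. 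I expect the main obstacle to be precisely this bookkeeping around $\str M'$: keeping straight that $S$ is a syntactic object whose ``extension'' is mediated by $T'$, verifying that $T'$ restricted appropriately really does compute the $\Phi^{\splain}_n$-satisfaction relation of $\str M$ (so the standard-$n$ case goes through) and that $S$'s extension is definable over $\str M$ from a parameter (so the diagonal argument goes through), without ever quantifying over a satisfaction relation for the proper-class structure $\str M'$ --- only over the fixed $\Psi$-satisfaction relation $T'$, which we have in hand. Once those points are nailed down, the contradiction is identical in form to the one establishing Claim~\ref{g}.
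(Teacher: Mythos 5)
Your proposal matches the paper's proof of Claim~\ref{ac} in all essentials: the same standard/nonstandard split on $n$, the same inductions showing that $S$ (as seen through $T'$) agrees with $T$, and the same key observation that the $E'$-extension of $S$ is definable over $\str M$ from a parameter because $M'$ consists of \emph{definitions} of subclasses, which is exactly what the G\"odel--Tarski diagonal argument needs. The only point you elide is that in the finitary setting the paper deliberately does \emph{not} arrange for $\HF$ to be an initial segment of $\str M$ (since possibly $V = \HF$, room must be left for $M_1$ outside $M$), so your ``$\theta, y \in V_n$ genuinely'' in the standard case must be read through the isomorphism $\iota$ of the standard part $H$ of $\HF^{\str M}$ with $\HF$ --- a purely notational repair.
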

{\sc Proof.} Suppose toward a contradiction that $\nmodels^{T'} \theta'$. By the definition of $T'$, this implies that there exist $S \in M'$; $n \in \omega^{\str M}$; and $\theta, y \in (V_n)^{\str M}$ such that (\ref{h}) holds, where we may substitute $\models^{T'}$ for $\str M' \models$.

As before, we first suppose $n$ is in the standard part of $\omega^{\str M}$. Let $H$ be the standard part of $\HF^{\str M}$, and let $\iota : H \to \HF$ be the (unique) isomorphism. For notational convenience, let $\bar x = \iota(x)$ for $x \in H$, and extend this notation to assignment functions, so that $\bar A(\bar v) = A(v)$ for any assignment $A$ and variable $v$ in the sense of $\str M$. Then $\bar n \in \omega$; $\bar \theta, \bar y \in V_{\bar n}$; and $D(\bar\theta,\bar y)$; so $\bar\theta \in \Theta$. Thus, $\str M \models \bar\theta$.

It is straightforward to show by induction on complexity that $S$ agrees with $T$ (the full satisfaction relation for $\str M$), i.e., $\models^{T'}$ \bmob$\models^{\obass S} \obass \phi[\obass A] $\emob{} iff $\models^T \bar \phi[\bar A]$, i.e., $\str M \models \bar \phi[\bar A]$.  Thus, by virtue of (\ref{h}.3), $\str M \nmodels \bar\theta$, contrary to the fact that $\str M \models \bar\theta$.

$n$ is therefore in the nonstandard part of $\omega^{\str M}$. As before, it is straightforward to show by induction within $V_\omega$ that $S$ restricted to formulas $\psi$ in the standard part of $\str M$ agrees with $T$, i.e.,
\m{ae}
\refstepcounter{theorem}\begin{equation}
\label{ae}
\models^{T'} \bmob\models^{\obass S} \obass \psi[\obass A]\emob \ciff \str M \models \bar\psi[\bar A].
\end{equation}
As before, since $S$ (actually, $\{ x \in M \mid \la x, S \ra \in E' \}$) is definable over $\str M$, there exist $z \in M$ and $\phi' \in H$ such that $\bar \phi'$ is an $\splain$-formula with free variables $\vbl_0, \vbl_1, \vbl_2$---and for all $\psi, a \in M$ such that $\str M \models$ \bmob$\obass \psi$ is an $\splain$-formula with free variables $\vbl_0, \vbl_1$\emob,
\m{af}
\refstepcounter{theorem}\begin{equation}
\label{af}
\models^{T'} \bmob\models^{\obass S} \obass \psi[ \obass a, \obass z]\emob
 \ciff \str M \models \bar\phi'[\psi, a, z].
\end{equation}
Let $\psi = \opneg \phi'(\vbl_0, \vbl_0, \vbl_1)$ in the sense of $\str M$. Then $\psi \in H$ and $\bar \psi = \opneg \bar \phi'(\vbl_0, \vbl_1, \vbl_2)$, so by virtue of (\ref{af}) and (\ref{ae}),
\[
\begin{split}
\str M \models \bar\psi[\psi, z]
 &\ciff \str M \models \opneg \bar\phi'[\psi, \psi, z]\\
 &\ciff \models^{T'} \bmob\nmodels^{\obass S} \obass \psi[\obass \psi, \obass z]\emob\\
 &\ciff \cneg \str M \models \bar\psi[\psi, z]\\
 &\ciff \str M \models \opneg \bar\psi[\psi, z].
\end{split}
\]
This contradiction establishes the claim.\qed (\ref{ac})

Recall that we have supposed that $\sigma \in \complet{\Theta'} \cap \lang L^{\splain}$, i.e., $\sigma$ is an $\splain$-sentence and $\Theta' \proves \sigma$; and we are attempting to show that $\Theta \proves \sigma$, i.e., $\Theta \cup \{  \opneg \sigma \}$ is inconsistent. We have supposed toward a contradiction that $\str M$ is a satisfactory structure such that $\str M \models \Theta \cup \{ \opneg \sigma\}$. We have constructed $\str M'$ such that $\str M'$ has the same ``sets'' as $\str M$, and $\models^{T'} \Theta' \cup \{\opneg \sigma \}$, with the $\Psi$-satisfaction relation $T'$. To complete the proof we must derive a contradiction from the fact that $\Theta' \proves\sigma$.

Theorem~\ref{s} is not available, because we have not shown that $\str M'$ is weakly satisfactory. It is sufficient, however, that $\str M$ is satisfactory. We make use of the methodology of the finitary proof of (\ref{l}),\footnote{We are paraphrasing somewhat Shoenfield's original finitary proof of this result in \cite{Shoenfield:1954}. A proof that is directly applicable to the present situation may be found in \cite[Chap.~2]{RVWwtbi1:2012}.} which proceeds by showing how to transform a $\tC$-proof $\pi$ of an $\splain$-sentence $\nu$ into an $\tS$-proof $\pi'$ of $\nu$ by the systematic elimination of class variables in favor of class constants, and the elimination of the latter in favor of expressions involving $\splain$-formulas essentially serving as definitions. In this process, for each class constant $C$ we define an appropriate $\splain$-formula $\phi_C$ with one free variable, and we replace each expression $\tau \opin C$ by $\phi_C(\tau)$. The premises of $\pi$ are axioms of $\tC$, each of which is replaced in $\pi'$ by finitely many instances of axioms of $\tS$. $\nu$ is not affected by this transformation.

If we apply this procedure to a $\Theta'$-proof of $\sigma$, we arrive at an $\splain$-proof with premises that are instances of $\tS$-axioms and sentences $\theta^\phi$ obtained from $\theta'$ by omitting the universal quantification of $S$ and replacing each expression $\tau \opin S$ in (\ref{ad}) (which we imagine to be written out in full) by $\phi(\tau)$, where $\phi$ is an $\splain$-formula with one free variable. By construction, for any $\splain$-formula $\phi$ with one free variable, $\{ x \in M \mid \str M \models \phi[x] \}$ is $\{ x \in M \mid \models^{T'} \bmob\obass x \in \obass S\emob \}$ for some $S \in M'$. Since $\models^{T'}$ is the $\Psi$-satisfaction relation for $\str M'$, where $\Psi$ is the class of $\cplain$-formulas with class-quantifier depth at most 2, and $\models^{T'} \theta'$, it follows that $\models^{T'} \theta^\phi$. Hence $\str M \models \theta^\phi$. (We could also use the fact that $\models^{T'} \tC \cup \{ \theta'\}$, and each $\theta^\phi$ has a proof from $\tC \cup \{ \theta' \}$ all of whose formulas are in $\Psi$.)

Thus, we have an $\splain$-proof of $\sigma$ from premises that are true in $\str M$, from which it follows that $\str M \models \sigma$. This contradiction establishes Claim~\ref{ab} and with it the theorem.
\end{proof}

\section{Universal satisfaction and forcing}
\m{au}\label{au}

The basic construction in the theory of forcing is that of a \emph{generic extension} $M^{\mathbb P, G}$ of a transitive model $M$ of $\tzf$ by an $M$-generic filter $G$ on a partial order $\mathbb P \in M$, where $M^{\mathbb P, G}$ is the structure that interprets each \emph{forcing term} $x \in M^{\mathbb P}$ as $x^G$. $M$ is referred to as the \emph{ground model}. The usefulness of this construction derives from the existence of a \emph{forcing relation} $\forces^{M, \mathbb P}$ with the following properties:
\begin{denumerate}
\item For any sentence $\sigma$ of the \emph{forcing language} $\lang L^{M, \mathbb P}$,
\begin{denumerate}
\item if for every $p \in |\mathbb P|$ there is an $M$-generic filter $G$ on $\mathbb P$ with $p \in G$, then for any $p \in |\mathbb P|$, $p \forces^{M, \mathbb P} \sigma$ iff for every $M$-generic filter $G$ on $\mathbb P$, if $p \in G$ then $M^{\mathbb P, G} \models \sigma$; and
\item for any $M$-generic filter $G$ on $\mathbb P$, $M^{\mathbb P, G} \models \sigma$ iff for some $p \in G$, $p \forces^{M, \mathbb P} \sigma$.
\end{denumerate}
\item $\forces^{M, \mathbb P}$ is, in a suitable sense, definable over $M$, i.e., over the structure $(M;\in)$.
\end{denumerate}
Note that as far as $M$ is concerned, $\forces^{M, \mathbb P}$ is $\forces^{V, \mathbb P}$, where $V$ is the class of all sets, so proper classes as ground models are an inescapable feature of the theory of forcing, with $V$ as the paradigm. Given a partial order $\mathbb P$, we let $\forces^{\mathbb P}$ be $\forces^{V, \mathbb P}$.

When the ground model $M$ is a proper class, the assumption that $M$ models $\tzf$ cannot be formulated as \bmob$M \models \tzf$\emob, as the full satisfaction relation for $M$ may not exist. In the context of $\tzf$, we may implement this assumption by positing \bmob$\theta^M$\emob{} for every axiom $\theta$ of $\tzf$ ($M$ being given by means of a defining formula). In the context of $\tgb$ we have the option of formulating this assumption as \bmob$M \gbmodels \tzf$\emob. This use of $\gbmodels$ in the context of forcing raises an issue that does not arise in other applications of $\gbmodels$ in set theory, which we will describe presently, and which we will settle by means of a conservative extension result, Theorem~\ref{ai}, somewhat akin to Theorem~\ref{a}. 

The issue in question arises in connection with the method of ``arguing in the generic extension'' $M^{\mathbb P, G}$ to prove that a sentence of the forcing language is forced by a condition $p$. Actually, one does not argue in $M^{\mathbb P, G}$ itself, but rather in a theory appropriate to it, without assuming that $G$ actually exists, and uses the existence of the argument to infer the forcing relationship. Thus, this method is applicable in particular to $\forces^{\mathbb P}$, i.e., to $\forces^{V, \mathbb P}$, even though $V$-generic filters demonstrably do not exist. 

Rather than working in the forcing language $\lang L^{M, \mathbb P}$ \emph{per se}, we will work in a more conventional language, with a signature $\splain^*$ that extends the signature $\splain$ of set theory by the addition of a unary predicate symbol $\Vcheck$ and constant symbols $\Pcheck$ and $\Gcheck$.
\m{aj}
\begin{display}
\label{aj}
Let $\Theta$ be the $\splain^*$-theory consisting of
\begin{denumerate}
\item $\tzf$ with the additional predicate symbol $\Vcheck$;
\item \bmob$\Vcheck$ is transitive and contains every ordinal\emob, i.e., \bmob$\forall x, y\, \big(\Vcheck(x) \cand y \in x \cimplies \Vcheck(y)\big)$ and $\forall_{\Ord} \alpha\,\,\Vcheck(\alpha)$\emob;
\item $\tzf^{\Vcheck}$, i.e., all axioms of $\tzf$ relativized to $\Vcheck$;
\item \bmob$\Vcheck(\Pcheck)$ and $\Pcheck$ is a partial order\emob;
\item \bmob$\Gcheck$ is a $\Vcheck$-generic filter on $\Pcheck$\emob;
\item \bmob{}every set is $x^{\Gcheck}$ for some $x \in \Vcheck^{\Pcheck}$\emob.
\end{denumerate}
\end{display}
The following proposition is a key element of the theory of generic extensions.
\m{bd}
\begin{proposition}
\label{bd}
{\normalfont [$\tzf$]}
For any finite subset $T$ of $\Theta$ there is a finite subset $F$ of $\tzf$ such that for any transitive (set) model $M$ of $F$, partial order $\mathbb P \in M$, and $M$-generic filter $G$ on $\mathbb P$, $M[G] \models T$ with $\Vcheck, \Pcheck, \Gcheck$ interpreted respectively as $M$, $\mathbb P$, and $G$.
\end{proposition}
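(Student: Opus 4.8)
The plan is to treat the finitely many sentences of $T$ one at a time, assign to each a finite fragment of $\tzf$ whose holding in $M$ forces the sentence to hold in the generic extension, and then take $F$ to be the union. Write $\str N = (M[G]; \in, M, \mathbb P, G)$ for the $\splain^*$-structure under consideration; since $M \subseteq M[G]$ and both are transitive sets, membership and relativization to $\Vcheck$ are computed literally in $\str N$. The sentences of $\Theta$ fall under the six clauses of \eqref{aj}, and all but the first reduce to the hypotheses together with absoluteness of low-complexity notions between transitive models of a finite fragment of $\tzf$. Concretely: ``$\Vcheck$ is transitive'' is just the assumed transitivity of $M$; ``$\Vcheck$ contains every ordinal'' amounts to $\Ord^{M[G]} \subseteq M$, which holds because $\operatorname{rank}(\tau^{G}) \le \operatorname{rank}(\tau)$ for $\tau \in M^{\mathbb P}$ gives $\Ord \cap M[G] = \Ord \cap M$ (once $M$ and $M[G]$ compute rank and ``ordinal'' correctly); ``$\Pcheck$ is a partial order'' holds because $\mathbb P \in M$ is assumed and partial-orderhood is $\Delta_0$; ``$\Gcheck$ is a $\Vcheck$-generic filter on $\Pcheck$'' is, after unwinding, the conjunction of ``$G$ is a filter on $\mathbb P$'' and ``for every $D$ with $\Vcheck(D)$, if $D$ is dense in $\mathbb P$ then $G \cap D \ne \emptyset$'', each absolute and true by the hypothesis that $G$ is $M$-generic; the axioms $\tzf^{\Vcheck}$ reduce to $(M;\in) \models \theta$, so it suffices to put the relevant finitely many $\theta$ into $F$; and ``every set is $x^{\Gcheck}$ for some $x \in \Vcheck^{\Pcheck}$'' holds by the construction of $M[G]$ as $\{\tau^G : \tau \in M^{\mathbb P}\}$, since for each such $\tau$ the assertions ``$\tau \in \Vcheck$'', ``$\tau$ is a $\mathbb P$-name'', and ``$\tau^{\Gcheck} = \tau^G$'' are recognized in $\str N$ by absoluteness of name-hood and of the evaluation map. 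Each of these costs only a finite fragment of $\tzf$ in $M$.

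The real content is the first clause: $\str N$ must satisfy $\tzf$ formulated in the language $\splain^*$, in particular the Separation and Replacement schemas for $\splain^*$-formulas mentioning $\Vcheck$. For the axioms not mentioning $\Vcheck$ (and the $\splain$-instances of the schemas occurring in $T$) this is the classical theorem that $M[G] \models \tzf$, each axiom flowing from a finite fragment in $M$. For the $\Vcheck$-instances I would extend the forcing apparatus to $\splain^*$: define $\Vdash^{*}$ by the usual recursion together with the clause that $p \Vdash^{*} \Vcheck(\tau)$ iff the set of $q \le p$ such that $q \Vdash^{*} \tau = \check y$ for some $y \in M$ is dense below $p$, and then prove --- over a finite fragment $F_{0} \subseteq \tzf$ chosen to accommodate the finitely many formulas of $T$ and their bounded complexity --- the two fundamental lemmas: that $\Vdash^{*}$ restricted to formulas of bounded complexity is definable over $(M;\in)$, and the truth lemma that $\str N \models \phi[\tau^{G},\dots]$ iff there is $p \in G$ with $p \Vdash^{*} \phi[\tau,\dots]$. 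Granting these, Separation and Replacement in $\str N$ for a formula $\phi$ from $T$ go through exactly as classically: the required subset of a given $a = \mu^{G}$ is the evaluation of the name $\{(\sigma,p) : (\sigma,\cdot) \in \mu \text{ and } p \Vdash^{*} (\sigma \opin \mu \opand \phi(\sigma,\dots))\}$, which belongs to $M$ precisely because $\Vdash^{*}$ is definable over $M$.

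Then $F$ is the union of $F_{0}$, the finite fragments extracted above from the other five clauses and from the classical forcing theorem for the $\splain$-part of clause~(1), and the finitely many axioms $\theta$ of $\tzf$ with $\theta^{\Vcheck} \in T$; this is a finite subset of $\tzf$. I expect the only genuinely delicate step to be the extension of the forcing relation and its two fundamental lemmas to the predicate $\Vcheck$ --- that is, forcing correctly over the ground model in the presence of the extra predicate; everything else is routine absoluteness bookkeeping.
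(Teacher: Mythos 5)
The paper states this proposition without proof, treating it as a standard fact of the theory of generic extensions, so there is no argument of the author's to compare against; judged on its own, your proposal is a correct reconstruction of the standard proof. You rightly identify the only step that is not absoluteness bookkeeping: clause~1 of \eqref{aj} demands the Separation and Collection schemas for $\splain^*$-formulas mentioning the predicate $\Vcheck$ (and the constants $\Pcheck$, $\Gcheck$, which are handled by the names $\check{\mathbb P}$ and the canonical name for the generic filter), and this forces one to extend the forcing relation by an atomic clause for $\Vcheck$ and to reprove definability over $(M;\in)$ and the truth lemma for the finitely many formulas occurring in $T$. Your clause --- $p$ forces $\Vcheck(\tau)$ iff the conditions deciding $\tau = \check y$ for some $y \in M$ are dense below $p$ --- is the standard and correct one, and the verification that it is definable over $M$ and satisfies the truth lemma via genericity is routine. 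The remaining clauses are dispatched exactly as you say, and taking $F$ to be the finite union at the end is sound.
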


At this point, some remarks concerning the definability of the forcing relation are in order. Since $\forces^{M, \mathbb P}$ subsumes the satisfaction relation for a transitive class $M$, the full forcing relation---like the full satisfaction relation for $M$---is not definable over $M$, and $\forces^{\mathbb P}$ is not definable in the context of $\tzf$. Instead, we define, for each formula $\phi$, the relation
\m{bb}
\refstepcounter{theorem}\begin{equation}
\label{bb}
\{ \la p, x_0, \dots, x_{n-1}\ra \mid p \in |\mathbb P| \cand x_0, \dots, x_{n-1} \in V^{\mathbb P} \cand p \forces^{\mathbb P} \bmob\obsub\phi(x_0, \dots, x_{n-1})\emob \}.
\end{equation}
This is, of course, a definition schema, not a single definition.\footnote{Alternatively, we may extend $\tzf$ by the addition of a new predicate symbol \bmob$\forces$\emob, with axioms that correspond to the usual recursive definition of the forcing relation. Note that these axioms allow us to generate a definition for the relation (\ref{bb}) for any given $\phi$, but this definition has quantifier depth that increases with that of $\phi$, and the axioms do not yield a definition of $\forces^{\mathbb P}$ in its entirety. Note also that, since \bmob$\forces$\emob{} is not introduced by definition, we must explicitly extend the axiom schemas of $\tzf$ to formulas that incorporate the new symbol. It is not hard to show that this theory is a conservative extension of $\tzf$, so it is largely immaterial which approach we use to the description of forcing over $V$, but in this article we will deal with $\tzf$ unmodified.} In $\tgb$ we have the option of defining a universal forcing predicate analogous to the universal satisfaction predicate $\gbmodels$, but this is irrelevant to the present discussion, so we defer this definition for now.

The following proposition establishes the method of ``arguing in a generic extension''. The theorem is well known, so we only briefly sketch the proof.
\m{av}
\begin{proposition}
\label{av}
{\normalfont [$\tS$]}
Suppose $\psi$ and $\phi$ are $\splain$-formulas with $n+2$ and $n$ free variables, respectively.\n{Let $\hat \phi$ be the canonical name of $\phi$.}  Suppose $\Theta \proves$
{\normalfont
\m{aw}
\begin{display}
\label{aw}
\bmob{}for all $p \in |\Pcheck|$ and $x_0, \dots, x_{n-1} \in \Vcheck^{\Pcheck}$, if $\obsub{\psi^{\Vcheck}}(\Pcheck, p, x_0, \dots, x_{n-1})$ then $p \in \Gcheck \cimplies \obsub{\phi}(x_0^{\Gcheck}, \dots, x_{n-1}^{\Gcheck})$\emob.
\end{display}
}
Then $\tzf \proves$
{\normalfont
\m{ax}
\begin{display}
\label{ax}
\bmob{}if $\mathbb P$ is a partial order, $p \in |\mathbb P|$, $x_0, \dots, x_{n-1} \in V^{\mathbb P}$, and $\obsub{\psi}(\mathbb P, p, x_0, \dots,\linebreak[1] x_{n-1})$, then $p \forces^{\mathbb P} \bmob\obsub{\phi}(x_0, \dots, x_{n-1})\emob$\emob.
\end{display}
}
\end{proposition}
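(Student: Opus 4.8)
The plan is to go through the completeness theorem for $\tC$ (Theorem~\ref{z}). Since $\tC$ is conservative over $\tS$ and the statement to be proved is arithmetical, it is enough to argue in $\tC$; and there it suffices to show that the sentence \eqref{ax} holds in every satisfactory model of $\tzf$, since then $\tzf$ together with the negation of \eqref{ax} has no satisfactory model, hence is inconsistent, hence $\tzf \proves$ \eqref{ax}. Fix a satisfactory $\str N \models \tzf$; $\str N$ need not be a set, but this is harmless, as the reflection theorem, the L\"owenheim--Skolem theorem, and the whole forcing apparatus are available inside $\str N$, and all of the reasoning below takes place there. Using the finiteness of formal proofs, fix a finite $T \subseteq \Theta$ with $T \proves$ \eqref{aw}, and apply Proposition~\ref{bd} to obtain a finite $F \subseteq \tzf$ such that every transitive set model $M$ of $F$, equipped with a partial order $\mathbb P \in M$ and an $M$-generic filter $G$ on $\mathbb P$, satisfies $M[G] \models T$ with $\Vcheck, \Pcheck, \Gcheck$ interpreted as $M, \mathbb P, G$; enlarge $F$ so that it also proves the relevant instances of the forcing theorem (definability and the truth lemma) for $\phi$ and for $\opneg\phi$. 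Suppose, toward a contradiction, that \eqref{ax} fails in $\str N$: we have a partial order $\mathbb P$, a condition $p \in |\mathbb P|$, and names $x_0, \dots, x_{n-1} \in V^{\mathbb P}$ with $\psi(\mathbb P, p, x_0, \dots, x_{n-1})$, yet $p \nforces^{\mathbb P} \phi(x_0, \dots, x_{n-1})$.

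By the reflection theorem, choose an ordinal $\alpha$ so large that $\mathbb P, p, x_0, \dots, x_{n-1} \in V_\alpha$, that $V_\alpha \models F$, and that $V_\alpha$ reflects $\psi$, the forcing formula for $\phi$ (i.e. the formula defining the relation in \eqref{bb}), and their subformulas. By L\"owenheim--Skolem take a countable $X \prec V_\alpha$ with $\mathbb P, p, x_0, \dots, x_{n-1} \in X$, let $\pi : X \to M$ be its transitive collapse, and set $\bar{\mathbb P} = \pi(\mathbb P)$, $\bar p = \pi(p)$, $\bar x_i = \pi(x_i)$. Then $M$ is a countable transitive model of $F$, and by reflection together with the elementarity of $\pi$ we get $M \models \psi[\bar{\mathbb P}, \bar p, \bar x_0, \dots, \bar x_{n-1}]$, $\bar x_i \in M^{\bar{\mathbb P}}$, and $M \models$ ``$\bar p \nforces^{\bar{\mathbb P}} \phi(\bar x_0, \dots, \bar x_{n-1})$''. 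Working inside $M$, the last fact gives a $q \le \bar p$ with $q \forces^{\bar{\mathbb P}} \opneg\phi(\bar x_0, \dots, \bar x_{n-1})$; since $M$ is countable there is an $M$-generic filter $G$ on $\bar{\mathbb P}$ with $q \in G$, so $\bar p \in G$, and by the truth lemma for $\opneg\phi$ (available since $M \models F$) we conclude $M[G] \models \opneg\phi(\bar x_0^G, \dots, \bar x_{n-1}^G)$.

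On the other hand, $M \models F$, $\bar{\mathbb P} \in M$ is a partial order, and $G$ is $M$-generic, so by the choice of $F$ via Proposition~\ref{bd} we have $M[G] \models T$ with $\Vcheck, \Pcheck, \Gcheck$ interpreted as $M, \bar{\mathbb P}, G$; since $T \proves$ \eqref{aw} and $M[G]$ is a set (hence satisfactory), $M[G] \models$ \eqref{aw}. Now apply \eqref{aw} in $M[G]$ to $\bar p$ and $\bar x_0, \dots, \bar x_{n-1}$: its hypothesis $\psi^{\Vcheck}(\Pcheck, \bar p, \bar x_0, \dots, \bar x_{n-1})$ unwinds to $M \models \psi[\bar{\mathbb P}, \bar p, \bar x_0, \dots, \bar x_{n-1}]$, which holds, and $\bar x_i \in (\Vcheck^{\Pcheck})^{M[G]} = M^{\bar{\mathbb P}}$; so we obtain $\bar p \in \Gcheck \cimplies \phi(\bar x_0^{\Gcheck}, \dots, \bar x_{n-1}^{\Gcheck})$, i.e. $M[G] \models \phi(\bar x_0^G, \dots, \bar x_{n-1}^G)$, because $\bar p \in G$. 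This contradicts the previous paragraph. Hence \eqref{ax} holds in $\str N$, and since $\str N$ was an arbitrary satisfactory model of $\tzf$, completeness yields $\tzf \proves$ \eqref{ax}.

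The part I expect to require care is the choice of the finite fragment $F$ and the reflecting rank $V_\alpha$: $F$ must at once subsume the fragment produced by Proposition~\ref{bd}, decide that $\Pcheck$ is a partial order, and carry the instances of the forcing theorem for $\phi$ and $\opneg\phi$ needed to run the argument in $M$ and in $M[G]$, while $V_\alpha$ must reflect both $\psi$ and the forcing formula for $\phi$. Everything else is the standard ``countable transitive model'' argument for the legitimacy of arguing in a generic extension, the one novelty being that the satisfaction predicate $\gbmodels$ is never invoked: the hypothesis is transported through an honest generic extension $M[G]$ of a set model, where ordinary satisfaction applies, and the only point requiring vigilance is that the whole semantic argument is conducted inside the model $\str N$ supplied by Theorem~\ref{z}, not in $\tzf$ itself.
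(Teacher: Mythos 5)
Your proof is correct and follows essentially the same route as the paper's: a finite fragment of $\Theta$ proving \eqref{aw}, a finite $F \subseteq \tzf$ chosen via Proposition~\ref{bd} and augmented so that transitive models of $F$ correctly define $\forces^{\mathbb P}\phi$, then reflection, L\"owenheim--Skolem, transitive collapse, and an actual generic filter over the resulting countable transitive model to derive the contradiction. The only cosmetic difference is that you wrap the argument in the completeness theorem, verifying \eqref{ax} semantically in an arbitrary satisfactory model of $\tzf$, whereas the paper presents the identical reasoning directly as a sketched $\tzf$-proof of \eqref{ax}.
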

\begin{proof} Let $\psi$ and $\phi$ be given, and let $\Theta_0$ be a finite subset of $\Theta$ such that $\Theta_0$ proves (\ref{aw}). Let $F$ be a finite subset of $\tzf$ such that for any transitive model $M$ of $F$, and any partial order $\mathbb P \in M$, 
\begin{denumerate}
\item $M$ correctly defines $\forces^{M, \mathbb P} \phi$; and
\item for any $M$-generic filter $G$ on $\mathbb P$, $M[G] \models \Theta_0$ with $\Vcheck, \Pcheck, \Gcheck$ interpreted respectively as $M$, $\mathbb P$, and $G$. 
\end{denumerate}
We now sketch a proof of (\ref{ax}) in $\tzf$ (without being too fussy about use vs. mention). We begin by supposing toward a contradiction that it is not the case. We use a reflection argument, followed by the transitive collapse of a countable elementary substructure, to obtain a countable transitive model $M$ of $F$, with a partial order $\mathbb P \in M$, $p \in |\mathbb P|$, and $x_0, \dots, x_{n-1} \in M^{\mathbb P}$, such that $M \models\psi[\mathbb P, p, x_0, \dots, x_{n-1}]$, and $p \nforces^{M, \mathbb P} \phi(x_0, \dots, x_{n-1})$. We let $G$ be an $M$-generic filter on $\mathbb P$ such that $p \in G$ and $M[G] \nmodels \phi[x^G_0, \dots, x^G_{n-1}]$. Since $M \models F$, $M[G] \models \Theta_0$. It follows that $M[G] \models \phi[x^G_0, \dots, x^G_{n-1}]$, a contradiction. 
\n{ 
Let $\Psi$ be a finite set of $\splain$-formulas including $F$ and containing $\psi$ and the formula \bmob$\forces \obsub\phi$\emob. As usual, let $\hat \Psi$ be the canonical name of $\Psi$. The following argument is a proof in $\tzf$ of (\ref{ax}).

\bmob{}Suppose $\mathbb P'$ is a partial order, $p' \in |\mathbb P'|$, $x'_0, \dots, x'_{n-1} \in V^{\mathbb P'}$, and $\obsub{\psi}(\mathbb P', p', x'_0, \dots, x'_{n-1})$. Suppose toward a contradiction that $p' \nforces^{\mathbb P'} \bmob\obsub{\phi}(x'_0, \dots, x'_{n-1})$\emob. Let $\alpha$ be an ordinal such that $\mathbb P', x'_0, \dots, x'_{n-1} \in V_\alpha$ and $V_\alpha$ is a $\obsub{\hat\Phi}$-elementary substructure of $V$.\footnote{In other words, for each $\nu \in \subclose\Psi$, if $\Free \nu = \{ u_0, \dots, u_{m-1} \}$, we impose the requirement \bmob$\forall \obsub{u_0, \dots, u_{m-1}} \in V_\alpha\, (\obsub\nu^{V_\alpha} \ciff \obsub\nu)$\emob.} Let $M'$ be a countable elementary substructure of $V_\alpha$ such that $\mathbb P', p', x'_0, \dots, x'_{n-1} \in M'$. Let $M$ be the transitive collapse of $M'$, and let $\mathbb P$, $p$, $x_0, \dots, x_{n-1}$ be the images under the collapsing map of the corresponding elements of $M'$. Then $M \models \obsub{\hat F}$, $M \models \obsub\psi[\mathbb P, p, x_0, \dots, x_{n-1}]$, and $M \models$ \bmob$\obass p \nforces^{\obass{\mathbb P}} \bmob\obsub{\phi}(\obass{x_0, \dots, x_{n-1}})\emob$. Let $G$ be an $M$-generic filter on $\mathbb P$ such that $p \in G$ and $M[G] \nmodels \bmob\obsub\phi[x^G_0, \dots, x^G_{n-1}]$. Then $M[G] \models \obass{\hat{\Theta_0}}$. It follows that $M[G] \models \bmob\obsub\phi[x^G_0, \dots, x^G_{n-1}]$, a contradiction.\emob
}

In effect, we justify the method of arguing in a hypothetical generic extension of $V$ by arguing in an actual generic extension of a countable transitive model of a finite fragment of $\tzf$.
\end{proof}

Note that $\Theta$ implements the hypothesis that $\Vcheck$ is a model of $\tzf$ by positing each axiom of $\tzf$ relativized to $\Vcheck$. In a pure set theory we have no other option, as $\tzf$ is not finitely axiomatizable, and proper classes do not exist. In $\tgb$ we may use the satisfaction predicate $\gbmodels$ defined in (\ref{t}) to implement the hypothesis that a proper class $M$ is a model of $\tzf$ as the single sentence \bmob$M \gbmodels \tzf$\emob.

The use of $\gbmodels$ can play the same simplifying role in the exposition of the theory of forcing as in other areas of set theory that deal with proper class models, but in the case of forcing, the following question arises:

Let $\cplain^*$ be the signature $\cplain$ with additional constants $\Vcheck$, $\Pcheck$, and $\Gcheck$; we also treat $\Vcheck$ as a unary predicate in the usual way.
\m{ak}
\begin{display}
\label{ak}
Let $\Theta'$ be the $\cplain^*$-theory which is $\Theta$ with the following changes:
\begin{denumerate}
\item [$1'$.] $\tgb$.
\item [$3'$.] \bmob$\Vcheck \gbmodels \tzf$\emob.
\end{denumerate}
\end{display}
When ``arguing in a generic extension'', we will naturally reason from $\Theta'$ rather than $\Theta$. The question is whether Proposition~\ref{av} applies with $\Theta'$ in place of $\Theta$.

In the absence of an affirmative answer to this question, the usefulness of $\gbmodels$ is much diminished, as one must maintain a parallel development of forcing without $\gbmodels$ to use when deriving forcing relations by ``arguing in a generic extension''. Thus, the following theorem is a great convenience.

\m{be}
\begin{theorem}
\label{be}
{\normalfont [$\tS$]}
Suppose $\psi$ and $\phi$ are $\splain$-formulas with $n+2$ and $n$ free variables, respectively.\n{Let $\hat \phi$ be the canonical name of $\phi$.}  Suppose $\Theta' \proves$
\m{bf}
\begin{display*}
\label{bf}
\bmob{}for all $p \in |\Pcheck|$ and $x_0, \dots, x_{n-1} \in \Vcheck^{\Pcheck}$, if $\obsub{\psi^{\Vcheck}}(\Pcheck, p, x_0, \dots, x_{n-1})$ then $p \in \Gcheck \cimplies \obsub{\phi}(x_0^{\Gcheck}, \dots, x_{n-1}^{\Gcheck})$\emob.
\end{display*}
Then $\tzf \proves$
\m{bg}
\begin{display*}
\label{bg}
\bmob{}if $\mathbb P$ is a partial order, $p \in |\mathbb P|$, $x_0, \dots, x_{n-1} \in V^{\mathbb P}$, and $\obsub{\psi}(\mathbb P, p, x_0, \dots, x_{n-1})$, then $p \forces^{\mathbb P} \bmob\obsub{\phi}(x_0, \dots, x_{n-1})\emob$\emob.
\end{display*}
\end{theorem}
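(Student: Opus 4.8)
The plan is to reduce Theorem~\ref{be} to Proposition~\ref{av} by showing that anything $\Theta'$ proves about the forcing situation, $\Theta$ already proves—in other words, that $\Theta'$ is conservative over $\Theta$ for the relevant class of $\splain^*$-sentences. Since the hypothesis \eqref{bf} and the conclusion \eqref{bg} are both statements in the pure-set-theory language extended only by $\Vcheck, \Pcheck, \Gcheck$ (i.e.\ in $\lang L^{\splain^*}$, no class variables), it suffices to prove: $\complet{\Theta'} \cap \lang L^{\splain^*} = \complet{\Theta}$. The inclusion $\supseteq$ is essentially immediate, since $\Theta'$ contains $\tgb$ (which proves $\tC$, hence the identity axioms and all of $\tzf$'s relativizations follow from \bmob$\Vcheck \gbmodels \tzf$\emob{} via Theorem~\ref{bl}/\ref{c}-style reasoning applied to the structure $(\Vcheck;\in)$) and the other axioms of $\Theta'$ are literally those of $\Theta$. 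The real content is the inclusion $\subseteq$, and this is where the machinery of the proof of Theorem~\ref{a} gets reused.

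The key steps I would carry out, working in $\tS$ (or its notational expansion), are as follows. First, suppose $\sigma \in \complet{\Theta'} \cap \lang L^{\splain^*}$ and, toward a contradiction, that $\Theta \nproves \sigma$, so $\Theta \cup \{\opneg\sigma\}$ is consistent. By the completeness theorem \eqref{z}, fix a satisfactory structure $\str M = (M; E, \dots)$ (interpreting $\Vcheck, \Pcheck, \Gcheck$ as well) with $\str M \models \Theta \cup \{\opneg\sigma\}$, with full satisfaction relation $T$. Second, expand $\str M$ to a $\cplain^*$-structure $\str M' = (M, M'; E', \dots)$ exactly as in the finitary proof of Theorem~\ref{a}: adjoin, for each $\splain$-formula $\psi$ with two free variables and each parameter $a \in M$, a new ``class'' element coding the definition of $\{x \mid\, \models^T \psi[x,a]\}$ when that is not already realized in $M$. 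As there, define $=^{\str M'}$ by coextensionality, let $\Psi$ be the class of $\cplain^*$-formulas of class-quantifier depth $\le 2$, and observe (using that $T$ is the \emph{full} satisfaction relation for $\str M$) that a $\Psi$-satisfaction relation $T' \supseteq T$ exists for $\str M'$ and that $\models^{T'} \theta$ for every axiom of $\tgb$ and every identity axiom. Third—the crucial point—verify $\models^{T'} \Theta'$. Axioms $1',2,4,5,6$ are routine: $1'$ is the previous observation, and $2,4,5,6$ are $\splain^*$-sentences true in $\str M$ (since $\str M \models \Theta$) and hence still true under $T'$. For $3'$, namely \bmob$\Vcheck \gbmodels \tzf$\emob, I would argue that since $\str M \models \tzf^{\Vcheck}$ (axiom $3$ of $\Theta$), every axiom $\theta$ of $\tzf$ has $\theta^{\Vcheck}$ true in $\str M$; and since $T$ is the full satisfaction relation for $\str M$, the restriction of $T$ to $(\Vcheck^{\str M};\in)$ gives, for each individual $\theta$, a $\{\theta\}$-satisfaction relation witnessing $\models^{T'}$\bmob$(\Vcheck;\in)\gbmodels\theta$\emob; moreover \emph{every} $\{\theta\}$-satisfaction relation for $(\Vcheck^{\str M};\in)$ in the sense of $\str M'$ agrees with $T$ there (partial satisfaction relations agree on common domains, and $\str M$ being satisfactory pins this down), so the universal quantifier over satisfaction relations implicit in $\gbmodels$ is harmless. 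Hence $\models^{T'}\Theta'$.

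Fourth, transfer $\Theta' \proves \sigma$ into $\str M$. Here I would invoke the same device as in the finitary proof of Theorem~\ref{a}: Theorem~\ref{s} is unavailable because $\str M'$ need not be weakly satisfactory, but the proof $\pi$ of $\sigma$ from $\Theta'$ uses only finitely many formulas, and by the Shoenfield-style elimination of class variables (paraphrased as in the finitary proof of \eqref{l}, cf.\ \cite{Shoenfield:1954} and \cite[Chap.~2]{RVWwtbi1:2012}) one converts $\pi$ into an $\splain^*$-proof of $\sigma$ whose premises are instances of $\tS$-axioms and $\splain^*$-sentences of the form $\theta^\phi$ obtained from the axioms of $\Theta'$ by replacing class constants with $\splain$-formula definitions; alternatively, note that every formula in the $\tgb$-proof of such an instance lies in $\Psi$, so $\models^{T'}$ of that instance follows from $\models^{T'}\Theta'$, whence each premise is true in $\str M$. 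Therefore $\str M \models \sigma$, contradicting $\str M \models \opneg\sigma$. This shows $\complet{\Theta'} \cap \lang L^{\splain^*} = \complet{\Theta}$; in particular \eqref{bf} proved from $\Theta'$ yields \eqref{bf} proved from $\Theta$, and Proposition~\ref{av} delivers the conclusion \eqref{bg} in $\tzf$.

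I expect the main obstacle to be the verification of $3'$, i.e.\ pinning down that $\models^{T'}$\bmob$\Vcheck \gbmodels \tzf$\emob{} really holds in $\str M'$. The subtlety is that $\gbmodels$ quantifies universally over $\{\theta\}$-satisfaction relations \emph{as seen inside $\str M'$}, and $\str M'$ has extra ``class'' objects that $\str M$ does not; one must be sure that no spurious $\{\theta\}$-satisfaction relation for $(\Vcheck;\in)$ arises among the new classes. This is exactly where satisfactoriness of $\str M$ is doing real work—partial satisfaction relations agree on their common domain (stated just before Theorem~\ref{m}), and the genuine full satisfaction relation $T\restriction(\Vcheck^{\str M})$ already lives in $\str M'$, so any $\{\theta\}$-satisfaction relation in $\str M'$ for $(\Vcheck;\in)$ must coincide with it on $\theta$. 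A secondary, more bookkeeping-level obstacle is checking that the class-variable-elimination argument goes through verbatim when the base signature is $\splain^*$ rather than $\splain$ (the extra constants and the unary predicate $\Vcheck$), but this is a routine adaptation since those symbols are not class symbols and are untouched by the elimination.
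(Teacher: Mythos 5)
Your overall strategy is exactly the paper's: Theorem~\ref{be} is obtained by combining Proposition~\ref{av} with the statement that $\Theta'$ is conservative over $\Theta$ for $\splain^*$-sentences (the paper's Theorem~\ref{ai}), and your outline of that conservativity proof (Henkin model $\str M$ of $\Theta \cup \{\opneg\sigma\}$, canonical expansion $\str M'$ by definable classes, the $\Psi$-satisfaction relation $T'$ for class-quantifier depth $\le 2$, and the Shoenfield-style elimination of class variables to transfer $\Theta' \proves \sigma$ back to $\str M$) matches the paper step for step. However, there is a genuine gap at precisely the point you flag as "the main obstacle," and your proposed resolution of it does not work. The statement \bmob$\Vcheck \gbmodels \tzf$\emob, read inside $\str M'$, quantifies over all $\theta$ in $\tzf^{\str M}$, and if $\omega^{\str M}$ is nonstandard this includes nonstandard instances of the \axiom{Comprehension} and \axiom{Collection} schemas. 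For such $\theta$ the hypothesis $\str M \models \tzf^{\Vcheck}$ gives you nothing ($\theta$ is not a real formula, so $T$ assigns it no value), and "every $\{\theta\}$-satisfaction relation agrees with $T\restriction\Vcheck$ on their common domain" settles only the standard subformulas of $\theta$, not the truth value of $\theta$ itself. So your verification of axiom $3'$ covers only the standard part of $\tzf^{\str M}$.

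The missing content is the paper's treatment of a nonstandard instance $\theta$ of \axiom{Comprehension} witnessed by some $\{\theta\}$-satisfaction relation $S \in M'$ for $\Vcheck$ with $\nmodels^{S}\theta$. Such an $S$ is definable over $\str M$ from a parameter $a^{G}$ that may genuinely involve the generic object $\Gcheck^{\str M}$, so the set $x' = \{ z \in x \mid \models^{S}\psi[z,y]\}$ required by $\theta$ is not obviously obtainable from $\axiom{Comprehension}^{\Vcheck}$: its defining condition is not \emph{prima facie} expressible over $\Vcheck$. The paper closes this by arguing inside $\Theta$: take $p \in \Gcheck$ forcing that the defining formula $\phi$ yields the $\{\theta\}$-satisfaction relation, prove by induction on subformulas that $p$ \emph{decides} $\phi(\check\psi,\check A, a)$ for every subformula $\psi$ and assignment $A$, and then form $x'$ using the forcing relation (which is definable over $\Vcheck$) together with $\axiom{Comprehension}^{\Vcheck}$. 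Nonstandard instances of \axiom{Collection} are easier (they follow from a single instance of the \axiom{Collection} schema of $\tgb$, verified by $T'$), but the \axiom{Comprehension} case is the heart of the proof and cannot be dispatched by the agreement-on-common-domains argument you give.
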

\begin{proof}
This follows immediately from Theorem~\ref{av} and the following theorem (\ref{ai}).
\end{proof}

\m{ai}
\begin{theorem}
\label{ai}
{\normalfont [$\tS$]}
$\Theta'$ is a conservative extension of $\Theta$ in the sense that for any $\splain^*$-sentence $\sigma$, if $\Theta' \proves \sigma$ then $\Theta \proves \sigma$.
\end{theorem}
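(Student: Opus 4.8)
The plan is to adapt, almost verbatim, the finitary proof of Theorem~\ref{a}, with the one auxiliary sentence $\theta'$ of that proof replaced by the axioms of $\Theta'$ that are not already present in $\Theta$. Since $\tgb$ is finitely axiomatizable and items $2,3',4,5,6$ are single sentences, $\Theta'$ is itself finite, so no coding of a recursively enumerable set is needed; the only genuinely new sentence to worry about is item~$3'$, $\bmob\Vcheck\gbmodels\tzf\emob$. First I would pass to the contrapositive: assuming $\Theta\nproves\sigma$ for an $\splain^*$-sentence $\sigma$, the theory $\Theta\cup\{\opneg\sigma\}$ is consistent, so by Theorem~\ref{z} there is a satisfactory structure $\str M$ with $\str M\models\Theta\cup\{\opneg\sigma\}$; let $T$ be its full satisfaction relation. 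I would then form the canonical expansion $\str M'$ of $\str M$ to a model of $\tgb$ exactly as in the finitary proof of Theorem~\ref{a}: the ``classes'' of $\str M'$ are the definitions of subclasses of $M$ over $\str M$ from parameters in $M$ (the defining formulas now being $\splain^*$-formulas, so that $\Vcheck,\Pcheck,\Gcheck$ may occur), with $=^{\str M'}$ given by coextensionality. Letting $\Psi$ be the class of $\cplain^*$-formulas of class-quantifier depth at most $2$, satisfactoriness of $\str M$ yields, essentially by direct definition, a $\Psi$-satisfaction relation $T'\supseteq T$ for $\str M'$ with $\models^{T'}\theta$ for every axiom $\theta$ of $\tgb$ and every axiom of identity.

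The axioms of $\Theta'$ in items $2,4,5,6$ are $\splain^*$-sentences of $\Theta$, hence true under $T$, and therefore true under $T'$ because the $\splain^*$-reduct of $\str M'$ is $\str M$. So everything reduces to verifying $\models^{T'}\bmob\Vcheck\gbmodels\tzf\emob$, and this is the heart of the matter. I would argue as in Claim~\ref{ac}: suppose toward a contradiction there are $\theta,S\in M'$ with $\str M'$ satisfying, under $T'$, ``$\theta\in\tzf$'', ``$S$ is a $\{\theta\}$-satisfaction relation for $(\Vcheck;\in)$'', and ``$\nmodels^S\theta$''. Since partial satisfaction relations agree on their common domain, $S$ agrees, on the subexpressions of $\theta$, with the uniformly $\splain^*$-definable relation $U$ for which $U$ restricted to $\Phi^\splain_n$ is the $\Phi^\splain_n$-satisfaction relation for $(\Vcheck;\in)$, for every $n$ beyond the rank of $\theta$; hence the offending value of $\theta$ is the one dictated by the Tarski clauses, i.e. by relativized truth in $\Vcheck$. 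When $\theta$ lies in the standard part of $\str M$ it is a genuine axiom of $\tzf$, so $\theta^{\Vcheck}\in\Theta$ and $\str M\models\theta^{\Vcheck}$, contradicting $\models^{T'}\bmob\nmodels^S\theta\emob$. This standard case is the exact analogue of the standard-$n$ case of Claim~\ref{ac} and uses only $\str M\models\Theta$.

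The step I expect to be the main obstacle is the case in which $\theta$ is nonstandard. The argument of Claim~\ref{ac} does not transfer verbatim, because there the relation in play was a satisfaction relation for $(V;\in)$ itself and its definability over $\str M$ was refuted by the G\"odel--Tarski undefinability of truth, whereas a satisfaction relation for the proper substructure $(\Vcheck;\in)$ need not be undefinable over $\str M$ (if $\Vcheck$ were interpreted as $L$ it would even be uniformly $\splain$-definable). The way I would handle it is to exploit the axioms of $\Theta$ peculiar to the forcing context, namely items $4$--$6$, which assert $V=\Vcheck[\Gcheck]$ with $\Gcheck$ a $\Vcheck$-generic filter on $\Pcheck\in\Vcheck$: since truth in $V=\Vcheck[\Gcheck]$ (which is truth in $\str M$, hence undefinable over $\str M$) is recoverable from the forcing relation over $\Vcheck$ applied to canonical names, a relation definable over $\str M$ that behaved like a satisfaction relation for $(\Vcheck;\in)$ on formulas of unbounded rank would, via this recovery, produce a definition of truth for $(V;\in)$ over $\str M$ --- a contradiction. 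Thus for a genuinely nonstandard $\theta$ no definition over $\str M$ can be a $\{\theta\}$-satisfaction relation for $(\Vcheck;\in)$ that declares $\theta$ false, which is again a diagonalization. (An alternative route is to arrange at the outset, by a more careful Henkin construction in Theorem~\ref{z}, that $\str M$ already satisfies the single $\splain^*$-sentence expressing ``$\Vcheck$ internally models $\tzf$'', after which the nonstandard case is immediate; the delicate point with either route is establishing that this can be done without assuming $\Con\tzf$, since the whole argument is carried out in $\tS$.)

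Finally I would close exactly as in Theorem~\ref{a}: given a $\Theta'$-proof of $\sigma$, apply the class-elimination procedure of the finitary proof of \eqref{l} to obtain an $\splain^*$-proof of $\sigma$ whose premises are instances of $\tzf$-axioms, the sentences of items $2,4,5,6$, and instances of item~$3'$ in which the class quantifier over $S$ has been removed and ``$x\opin S$'' replaced throughout by an $\splain^*$-formula $\phi(x)$. Each such premise is true in $\str M$ under $T'$: the $\tzf$-instances and items $2,4,5,6$ because $\str M\models\Theta$, and each instance of item~$3'$ because $\models^{T'}\bmob\Vcheck\gbmodels\tzf\emob$, $T'$ is a $\Psi$-satisfaction relation, and that instance has a proof from $\tgb\cup\{$item~$3'\}$ all of whose formulas lie in $\Psi$. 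Hence $\str M\models\sigma$, contradicting $\str M\models\opneg\sigma$, and therefore $\Theta\proves\sigma$.
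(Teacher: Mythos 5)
Your overall architecture matches the paper's: contrapositive, Henkin model $\str M$ with full satisfaction relation $T$, canonical expansion $\str M'$ by definable classes, the $\Psi$-satisfaction relation $T'$ for class-quantifier depth at most $2$, reduction of everything to $\models^{T'}\bob\Vcheck\gbmodels\tzf\eob$, and the class-elimination argument at the end. You also correctly spot the crucial obstruction: the G\"odel--Tarski diagonalization from Claim~\ref{ac} does not transfer, because a $\{\theta\}$-satisfaction relation for $(\Vcheck;\in)$ may well be definable over $\str M$. The paper makes exactly this observation.

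But your resolution of the nonstandard case has a genuine gap. A $\{\theta\}$-satisfaction relation $S$ is defined only on the subexpressions of the single (internally finite) formula $\theta$; for nonstandard $\theta$ this set threads through all standard ranks but contains only the particular formulas on the subformula chain of $\theta$, not all standard formulas of each rank, and in particular not the diagonal formula that the undefinability-of-truth argument needs. So ``a relation that behaves like a satisfaction relation for $(\Vcheck;\in)$ on formulas of unbounded rank'' is not what you have, and you cannot recover a truth definition for $(V;\in)$ from $S$ together with the forcing relation. Likewise, your auxiliary relation $U$ (with $U$ restricted to $\Phi^{\splain}_n$ the $\Phi^{\splain}_n$-satisfaction relation for every $n$) is not available in $\str M'$ for nonstandard $n$: Theorem~\ref{o} is a schema, one theorem per standard $n$, and $\tgb$ does not prove the uniform statement. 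The paper instead argues positively that any $\{\theta\}$-satisfaction relation must make $\theta$ true: once the standard axioms are disposed of, a nonstandard $\theta\in\tzf^{\str M}$ must be an instance of \axiom{Collection} or \axiom{Comprehension} for a nonstandard formula $\psi$. The \axiom{Collection} case follows from the corresponding $\tgb$ schema applied to $S$. The \axiom{Comprehension} case is the real work: $S$ is definable over $\str M$ from a parameter $a^G$, and, arguing in $\Theta$, one finds $p\in G$ forcing that the defining formula $\phi$ defines the $\{\theta\}$-satisfaction relation from $a$, and shows by induction on subformulas (Claim~\ref{an}) that $p$ \emph{decides} each instance $\phi(\check\psi,\check A,a)$; the comprehension witness $x'=\{z\in x\mid p\forces\bob\obsub{\phi(\check\psi,\check A,a)}\eob\}$ then lies in $\Vcheck$ by $\axiom{Comprehension}^{\Vcheck}$. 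This forcing/deciding argument is the essential new idea of the proof and is absent from your proposal; your alternative route (building the Henkin model so that $\str M$ satisfies a single $\splain^*$-sentence expressing that $\Vcheck$ internally models $\tzf$) founders on the fact that no such single $\splain^*$-sentence exists, which is the point of the whole enterprise.
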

\begin{proof} We will carry out the proof in $\tC$. Since the statement of the theorem is an $\splain$-sentence and $\tC$ is a conservative extension of $\tS$, the theorem follows from $\tS$.

We begin as in the finitary proof of Theorem~\ref{a}. Thus, suppose $\Theta' \proves \sigma$ and suppose toward a contradiction that $\Theta \nproves \sigma$. Let $\str M = (M; \in^{\str M}, M_0, \mathbb P, G)$ be a satisfactory structure such that $\str M \models \Theta \cup \{ \opneg \sigma\}$, where $M_0 = \Vcheck^{\str M}$, $\mathbb P = \Pcheck^{\str M}$ and $G = \Gcheck^{\str M}$. Let $T$ be the (full) satisfaction relation for $\str M$. Thus, $\models^T \Theta \cup \{ \opneg \sigma\}$. Let $\splain^{\Vcheck}$ be the expansion of the signature $\splain$ by the addition of the unary predicate symbol $\Vcheck$ (without the constant symbols $\Pcheck$ and $\Gcheck$ of $\splain^*$). Extend $\str M$ to a $\cplain^*$-structure $\str M'$ as before, by adding ``proper classes'' definable over $\str M$.
\m{bi}
\begin{display}
\label{bi}
Clearly, each added class is defined by an $\splain^{\Vcheck}$-formula from a parameter in $M$, which may incorporate $\mathbb P$ and/or $G$.\footnote{For notational simplicity, any formula requiring $n > 1$ parameters is replaced by a formula with a single parameter, which is an $n$-sequence.}
\end{display}
Let $\Psi$ be the class of $\cplain^*$-formulas with class-quantifier depth at most 2, and let $T'$ be the $\Psi$-satisfaction relation for $\str M'$.
\m{al}
\begin{claim}
\label{al}
$\models^{T'} \Theta'$.
\end{claim}
{\sc Proof.} It is straightforward to check that $\models^{T'} \theta$ for all $\theta \in \Theta'$ other than \bmob$\Vcheck \gbmodels \tzf$\emob. To show that $\models^{T'} $ \bmob$\Vcheck \gbmodels \tzf$\emob, suppose toward a contradiction that it does not. Note that \bmob$\Vcheck \gbmodels \tzf$\emob{} is \bmob{}for every $\theta \in \tzf$, for every $\{\theta\}$-satisfaction relation $S$ for $\Vcheck$, $\la \theta, 0 \ra \in S$\emob, so it has class-quantifier depth 1. Hence $\models^{T'} \bmob\Vcheck \ngbmodels \tzf\emob$, so there exist $\theta \in M$ and $S \in M'$ such that $\models^T$ \bmob$\obass\theta \in \tzf$\emob{} and $\models^{T'}$ \bmob$\obass S$ is the $\{\obass\theta\}$-satisfaction relation for $\Vcheck$, and $\nmodels^{\obass S} \obass \theta$\emob.

At this point the proof of Theorem~\ref{a} bifurcated according to whether $\theta$ is in the standard or the nonstandard part of $\str M$. In the latter event we obtained a contradiction from the fact that $S$ would include the full satisfaction relation for $\str M$, which cannot be definable over $\str M$. That depended on Definition~\ref{ad} of $\theta'$ in terms of $\Phi^{\splain}_n$-satisfaction relations: if $n \in M$ is nonstandard then $\Phi^{\splain}_n$ contains every standard $\splain$-formula. For this method (with $(\Vcheck; \in)$ and $\theta$ instead of $(M;\in)$ and $\theta'$) to be applicable in the present case, we would have to reformulate the universal satisfaction predicate so that $\str S \gbmodels \phi[A]$ iff for every $\Phi_\phi$-satisfaction relation $S$ for $\str S$, $\la \phi, A \ra \in S$, where $\Phi_\phi$ is defined so that if $\phi$ is nonstandard then $\Phi_\phi$ contains every standard formula. For example, we could let $\Phi_\phi$ consist of all formulas with complexity not greater than that of $\phi$ in some appropriate sense, rather than letting $\Phi_\phi$ consist of all subformulas of $\phi$, as we have done. In the proof of Theorem~\ref{a} it was legitimate to define $\Phi_\phi$ however we wished, as the theorem does not mention satisfaction. In the present case, such an alteration would be inelegant, to say the least---and it is unnecessary, since we may proceed as follows.

As before, let $H$ be the standard part of $\HF^{\str M}$. Note that $H$ is also the standard part of $\HF^{\str M_0}$, where $\str M_0$ is the substructure of $\str M$ corresponding to $M_0$. Let $x \mapsto \bar x$ be the isomorphism of $H$ with $\HF$. To simplify the notation, suppose that $(\lang L^{\splain^*})^{\str M}$ is $\lang L^{\splain^*}$, so $\bar \epsilon = \epsilon$ for any $\splain^*$-expression $\epsilon$.

It is easy to show (as in Theorem~\ref{c}) that if $\theta \in H$ then $\models^{T'}$ \bmob$\models^{\obass S} \obass\theta$\emob{} iff $\models^T \theta^{\Vcheck}$, so $\models^{T'}$ \bmob$\models^{\obass S} \obass\theta$\emob, since  $\models^T \tzf^{\Vcheck}$ by hypothesis. Thus, $\theta$ is in the nonstandard part of $\tzf^{\str M}$, which means that it is an instance of one of the axiom schemas for a nonstandard formula.

We will suppose that the schemas are \axiom{Collection} and \axiom{Comprehension}. (\axiom{Foundation} may be formulated in $\tzf$ as the statement that all nonempty sets have an $\in$-minimal element; it need not be formulated as a schema.) A sufficiently general version of \axiom{Collection} is
\[
\bmob\forall y\, \forall x\, \exists_{\Ord} \alpha\, \forall z \in x\, (\exists_{\Ord} \beta\,\, \obsub\psi(z, \beta, y) \cimplies \exists \beta < \alpha\,\,  \obsub\psi(z, \beta, y) ),
\]
where $\psi$ is an $\splain$-formula with three free variables. Suppose $\theta$ is the above instance of \axiom{Collection}. We will derive a contradiction by showing that $\models^{T'}$ \bmob$\models^{\obass S} \obass \theta$\emob.

Since $\models^{T'}$ \bmob$\obass S$ is the $\{\obass\theta\}$-satisfaction relation for $\Vcheck$\emob{} and $\models^T$ \bmob$\obass\psi$ is a subformula of $\obass\theta$\emob, it suffices to show that
\begin{multline*}
\models^{T'} \bmob\forall y, x \in \Vcheck\, \exists_{\Ord} \alpha\,\forall z \in x\, \big(\exists_{\Ord} \beta\, \models^{\obass S} \obass\psi[z, \beta, y]\\
 \cimplies \exists_{\Ord} \beta < \alpha\, \models^{\obass S} \obass\psi[z, \beta, y]\big)\emob.
\end{multline*}
This is an instance of the \axiom{Collection} schema of $\tgb$ and follows from the fact that $\models^{T'}$ \axiom{Collection}.

The case that $\theta$ is an instance of \axiom{Comprehension} is not so simple. Suppose
\[
\theta = \bmob\forall y\, \forall x\, \exists x'\,\forall z\, ( z \in x' \ciff z \in x \cand \obsub\psi(z,y))\emob,
\]
where $\psi$ is an $\splain$-formula with two free variables. Given $y, x \in M_0$, we must show that there exists $x' \in M_0$ such that for all $z \in M_0$, $z \in^{\str M} x'$ iff $z \in^{\str M} x$ and $\models^{T'}$ \bmob$\models^{\obass S} \obass \psi [\obass{z,y}]$\emob.

By construction, $S$ represents a subclass of $M$ definable over $\str M$ by an $\splain^{\Vcheck}$-formula $\phi$ from a parameter in $M$, which is $a^G$ for some $a \in M_0$, i.e.,
\[
\models^{T'} \bmob\models^{\obass S} \obass\psi[\obass A]\emob \ciff \la \psi, A \ra \in^{\str M'} S \ciff \models^T \phi[\psi, A, a^G].
\]
Thus, given $y, x \in M_0$,
\m{bp}
\begin{display}
\label{bp}
we must show that there exists $x' \in M_0$ such that for all $z \in M_0$, $z \in^{\str M} x'$ iff $z \in^{\str M} x$ and $\models^{T} \phi[\psi, A, a^G]$, where $A$ is the assignment of $z$ and $y$ to the free variables of $\psi$.
\end{display}

Let $u, v$ be new variables, and let $\phi'$ be the $\splain^{\Vcheck}$-formula with free variables $u, v$, obtained from \bmob$S$ is the $\{\obsub u \}$-satisfaction relation for $\Vcheck$\emob{} by replacing each subformula of the form \bmob$\la \psi, A \ra \in S\emob$ by $\phi(\psi, A, v)$. Without belaboring the issue, suffice it to say that $\phi'$ is a conjunction of formulas such as
\begin{denumerate}
\item \bmob$\obsub u$ is an $\splain^*$-formula\emob;
\item \bmob{}for any subformulas $\psi$ and $\psi'$ of $\obsub u$ and $\Vcheck$-assignment $A$ for $\psi$, if $\psi = \opneg \psi'$, then $\obsub\phi(\psi, A, v)$ iff $\cneg\obsub\phi(\psi', A, v)$\eob{} (with similar formulas for the other propositional connectives); and
\item \bmob{}for any subformulas $\psi$ and $\psi'$ of $\obsub u$, variable $w$, and $\Vcheck$-assignment $A$ for $\psi$, if $\psi = \opexists w\,\psi'$, then $\obsub\phi(\psi, A, v)$ iff for some $x$ such that $\Vcheck(x)$, $\obsub\phi(\psi', A \cup \{ (w, x) \}, v)$\eob{} (with a similar formula for the universal quantifier).
\end{denumerate}
We now have
\[
\models^T \phi'[\theta, a^G].
\]
Since $T$ is the full satisfaction relation for $\str M$ and $\models^T \Theta$, any deduction from $\Theta$ holds in $T$. We will therefore argue in $\Theta$ as follows.

\bmob{}Suppose $\obsub{\phi'(\theta, a^G)}$. Let $p \in G$ be such that
\[
p \forces \bmob\obsub{\phi'(\check\theta, a)}\emob.
\]
\m{an}
\begin{claim}
\label{an}
For every subformula $\psi$ of $\theta$ and every $\Vcheck$-assignment $A$ for $\psi$, $p$ decides \bmob$\obsub{\phi(\check \psi, \check A, a)}$\emob, i.e., either
\begin{denumerate}
\item $p \forces \bmob\obsub{\phi(\check \psi, \check A, a)}\emob$, or
\item $p \forces \bmob\obsub{\opneg \phi(\check \psi, \check A, a)}\emob$.
\end{denumerate}
\end{claim}
{\sc Proof.} Suppose not. Let $\psi$ be a counterexample of minimal complexity. By way of illustration, suppose $\psi = \opexists w\, \psi'$, and suppose $A$ is a $\Vcheck$-assignment for $\psi$. For any $x \in \Vcheck$, let $A^x = A \cup \{ (w,x) \}$, the extension of $A$ that assigns $x$ to $w$. By hypothesis, for any $x \in \Vcheck$, $p$ decides \bmob$\obsub{\phi(\check \psi', \check{A^x}, a)}$\emob. Recall that $p$ forces \bmob$\obsub{\phi'(\check \theta, a)}\emob$, which says that \bmob$\obsub\phi$\emob{} defines the $\{\theta\}$-satisfaction relation for $\Vcheck$ from the parameter $a^G$.

Suppose $p \nforces \bmob\obsub{\phi(\check \psi, \check A, a)}\emob$. Then for all $x \in \Vcheck$, $p \nforces \bmob\obsub{\phi(\check{\psi'}, \check{A^x}, a)}\emob$. Thus, for all $x \in \Vcheck$, since $p$ decides \bmob$\obsub{\phi(\check \psi', \check{A^x}, a)}$\emob, $p \forces \bmob\obsub{\opneg\phi(\check{\psi'}, \check{A^x}, a)}\emob$. Hence, $p \forces \bmob\obsub{\opneg \phi(\check \psi, \check A, a)}\emob$.

The other recursive clauses in the definition of satisfaction are handled similarly, and the atomic formulas are easily dealt with.\qed (\ref{an})

Let $x'$ be the set of $z \in x$ such that $p \forces \bmob\obsub{\phi(\check \psi, \check A, a)}\emob$, where $A$ is the assignment of $z$ and $y$ to the free variables of $\psi$. Then $x' \in \Vcheck$ by virtue of $\axiom{Comprehension}^{\Vcheck}$. Given $z \in \Vcheck$, let $A$ be the assignment of $z$ and $y$ to the free variables of $\psi$. If $z \in x'$ then $p \forces \bmob\obsub{\phi(\check \psi, \check A, a)}$\emob, so $\obsub\phi(\psi, A, a^G)$, since $p \in G$. On the other hand, if $z \notin x'$ then $p \nforces \bmob\obsub{\phi(\check \psi, \check A, a)}$\emob, so by the claim, $p \forces \bmob\obsub{\opneg\phi(\check \psi, \check A, a)}$\emob, whence $\cneg\obsub{\phi}(\psi, A, a^G)$, since $p \in G$.\emob

As noted above, the existence of this argument in $\Theta$ shows that there exists $x' \in M_0$ as required by (\ref{bp}), and this completes the proof that $\models^{T'} \theta$.\qed (\ref{al})
 
We now know that $\models^{T'} \Theta'$ and $\models^{T'} \opneg \sigma$, where $T'$ is the $\Psi$-satisfaction relation for $\str M'$, $\Psi$ being the class of $\cplain^*$-formulas with class-quantifier depth at most 2. We now wish to derive a contradiction from the assumption that $\Theta' \proves \sigma$. As in the proof of Theorem~\ref{a}, if we had \axiom{Infinity} we could arrange that $\str M'$ be a set and take $T'$ to be the full satisfaction relation for $\str M'$, from which the desired contradiction would follow at once. It would not be inappropriate to finish this way, as the theorem is only of interest in the context of forcing, which is only of interest in the context of \axiom{Infinity}; however, an argument can be made that if a finitary theorem has a finitary proof, one should be given, and we oblige.

To complete the proof in $\tC$ we proceed as in finitary proof of Theorem~\ref{a}, showing that any proof $\pi$ of $\sigma$ from $\Theta'$ may be replaced by a proof $\pi'$ of $\sigma$ from $\Theta$. Let $\theta' = \bmob\Vcheck \gbmodels \tzf\emob$, i.e.,
\m{bj}
\begin{display}
\label{bj}
\bmob{}for all $S$, for all $x \in \tzf$, if $S$ is an $\{x\}$-satisfaction relation for $\Vcheck$, then $\la x, 0 \ra \in S$\emob.
\end{display}
As before, we eliminate class variables in favor of class constants, and then replace each expression $\tau \opin C$ by $\phi_C(\tau)$, where $\phi$ is an appropriate ``definition'' of $C$. Each axiom of $\tgb$ used as a premise in $\pi$ is replaced in $\pi'$ by finitely many instances of axioms of $\tzf$.

The premise $\theta'$ is replaced by finitely many sentences $\theta^\phi$ obtained from $\theta'$ by omitting the universal quantification of $S$ and replacing each expression $\tau \opin S$ in (\ref{bj}) by $\phi(\tau)$, where $\phi$ is a formula with one free variable $v$. As before, since each such $\phi$ defines an element of $M'$ and $\models^{T'} \theta'$, it follows that $\models^{T'} \theta^\phi$. Hence $\str M \models \theta^\phi$.

Thus, we have an $\splain^*$-proof of $\sigma$ from premises that are true in $\str M$, so $\str M \models \sigma$, contradicting our assumption that $\str M \models \opneg \sigma$.
\end{proof}

\section{The universal forcing relation}
\m{bo}\label{bo}
We conclude by giving the promised definition of the universal forcing relation and valuation function. We leave it to the reader to supply the definitions of `$\Phi^{M, \mathbb P}$-forcing relation' and `$\Phi^{M, \alg A}$-valuation function', where $\Phi$ is a class of $\splain^{\Vcheck}$-formulas. (These will cover all sentences obtained from subformulas of members of $\Phi$ by substitution of elements of $M^{\mathbb P}$ or $M^{\alg A}$, respectively, for their free variables.) 
\m{ag}
\begin{definition}
\label{ag}
{\normalfont [$\tgb$]}
Suppose $M$ is a transitive model of $\tzf$, $\mathbb P$ is a partial order in $M$, and $\alg A$ is an $M$-complete boolean algebra in $M$.
\begin{denumerate}
\item Suppose $\phi$ is an $\lang L^{M, \mathbb P}$-sentence and $p \in |\mathbb P|$. Then $p \forces^{*M,\mathbb P} \phi$ \tiffdef for every $\{\phi\}^{M, \mathbb P}$-forcing relation $F$, $p \binop F \phi$.
\item Suppose $\phi$ is an $\alg L^{M, \alg A}$-sentence. If there exists a $\{\phi\}^{M, \alg A}$-valuation function $F$ then $\bv \phi^{*M, \alg A} \eqdef F\f \phi$; otherwise, $\bv \phi^{*M, \alg A} \eqdef \boldsymbol1$.\footnote{$\boldsymbol1$ is the correct value for $\bv \phi^{*M, \alg A}$ if no $\{\phi\}^{M, \alg A}$-valuation function exists, because in this case no $\{\phi\}^{M, \mathbb P}$-forcing relation exists, so $\{ p \mid p \forces^{*M,\mathbb P} \phi \} = |\mathbb P|$, which corresponds to boolean value $\boldsymbol1$.} 
\end{denumerate}
\end{definition}
These definitions reduce to the usual ones when $M$ is a set, and when $M$ is a proper class they permit the development of the theory of forcing in the usual way.

As we have noted above, this universal forcing predicate is not involved in any of the considerations of the preceding section. If we wished, of course, we could reformulate the conclusion of Theorem~\ref{be} to be that $\tgb \proves$
\m{bk}
\begin{display*}
\label{bk}
\bmob{}if $\mathbb P$ is a partial order, $p \in |\mathbb P|$, $x_0, \dots, x_{n-1} \in V^{\mathbb P}$, and $\obsub{\psi}(\mathbb P, p, x_0, \dots, x_{n-1})$, then $p \forces^{*\mathbb P} \bmob\obsub{\phi}(x_0, \dots, x_{n-1})\emob$\emob,
\end{display*}
which would serve the same practical purpose, and would be appropriate in an exposition of the theory of forcing in $\tgb$ using $\forces^*$.

%\bibliographystyle{plain}

%\bibliography{../bibliography/mathphys}

\begin{thebibliography}{10}

\bibitem{Bernays:1937}
Paul Bernays.
\newblock A system of axiomatic set theory---{P}art {I}.
\newblock {\em Journal of Symbolic Logic}, 2(1):65--77, 1937.

\bibitem{Craig:1958}
W.~Craig and R.~L. Vaught.
\newblock Finite axiomatizability using additional predicates.
\newblock {\em Journal of Symbolic Logic}, 23(3):289--308, 1958.

\bibitem{Jech:2003}
Thomas~J. Jech.
\newblock {\em Set {T}heory}.
\newblock Springer Monographs in Mathematics. Springer-Verlag, New York, third
  edition, 2003.

\bibitem{Kleene:1952}
Stephen~C. Kleene.
\newblock Finite axiomatizability of theories in the predicate calculus using
  additional predicate symbols.
\newblock {\em Memoirs of the American Mathematical Society}, (10):27--68,
  1952.

\bibitem{Levy:1979}
Azriel Levy.
\newblock {\em Basic {S}et {T}heory}.
\newblock Springer-Verlag, New York, first edition, 1979.

\bibitem{Ryll:1952}
Czes{\l}aw Ryll-Nardzewski.
\newblock The role of the axiom of induction in elementary arithmetic.
\newblock {\em Fundamenta Mathematicae}, 39:239--263, 1952.

\bibitem{Shoenfield:1954}
Joseph~R. Shoenfield.
\newblock A relative consistency proof.
\newblock {\em Journal of Symbolic Logic}, 19(1):21--28, 1954.

\bibitem{Solovay:1978}
Robert~M. Solovay, W.~N. Reinhardt, and A.~Kanamori.
\newblock Strong axioms of infinity and elementary embeddings.
\newblock {\em Annals of Mathematical Logic}, 13(1):73--116, 1978.

\bibitem{Takeuti:1975}
Gaisi Takeuti.
\newblock {\em Proof {T}heory}, volume~81 of {\em Studies in Logic and the
  Foundations of Mathematics}.
\newblock Elsevier, New York, 1975.

\bibitem{RVWwtbi1:2012}
Robert~A. Van~Wesep.
\newblock \href{http://www.mathetal.net/books.php}{Foundations of {M}athematics: {A} {G}eneralist's {G}uide}.

\end{thebibliography}

\end{document}